\newtheorem{theorem}{Theorem}[section]
\newtheorem{proposition}[theorem]{Proposition}
\newtheorem{lemma}[theorem]{Lemma}
\newtheorem{definition}[theorem]{Definition}
\newtheorem{remark}[theorem]{Remark}
\newtheorem{example}[theorem]{Example}
\numberwithin{equation}{section}
\newcommand{\C}{{\mathbb C}}
\newcommand{\R}{{\mathbb R}}
\newcommand{\Z}{{\mathbb Z}}
\newcommand{\N}{{\mathbb N}}
\newcommand{\Q}{{\mathbb Q}}
\newcommand{\e}{{\mathrm{e}}}
\newcommand{\CF}{{\mathcal F}}
\newcommand{\CH}{{\mathcal H}}
\newcommand{\CM}{{\mathcal M}}
\newcommand{\lattice}{\Lambda}
\newcommand{\CR}{{\mathcal R}}
\newcommand{\suppcone}{{C}}
\newcommand{\codim}{\operatorname{codim}}
\newcommand{\lin}{\operatorname{lin}}
\newcommand{\proj}{\operatorname{proj}}
\newcommand{\Res}{\operatorname{Res}}
\newcommand{\Sym}{\operatorname{Sym}}
\def\binomial(#1,#2){\binom{#1}{#2}}
\def\mult(#1,#2){\binom{#1}{#2}}
\renewcommand{\a}{{\mathfrak{a}}}
\renewcommand{\b}{{\mathfrak b}}
\renewcommand{\c}{{\mathfrak{c}}}
\renewcommand{\d}{{\mathfrak{d}}}
\newcommand{\f}{{\mathfrak{f}}}
\renewcommand{\t}{{\mathfrak{N}}}
\newcommand{\p}{{\mathfrak{p}}}
\newcommand{\q}{{\mathfrak{q}}}
\title[]{Local asymptotic Euler-Maclaurin expansion for Riemann sums over a semi-rational polyhedron}
\author{N. Berline}
\address{Nicole Berline:  \'Ecole Polytechnique, Centre de Math\'ematiques Laurent Schwartz, 91128 Palaiseau Cedex, France}
\email{nicole.berline@math.cnrs.fr}
\author{M. Vergne}
\address{Mich\`ele Vergne: Universit\'e Paris 7 Diderot, Imj-Prg, Sophie Germain,
75013 Paris} \email{michele.vergne@imj-prg.fr}
\begin{document}
\maketitle{}
\begin{abstract}
Consider the Riemann sum of a smooth compactly supported function $h(x)$ on a  polyhedron $\p\subseteq \R^d$, sampled at the points of the lattice $\Z^d/t$. We give an asymptotic expansion when $t\to +\infty$, writing each coefficient of this expansion as a sum indexed by the faces $\f$ of the polyhedron, where the $\f$ term is the integral over $\f$ of a differential operator  applied to the function $h(x)$. In particular, if a Euclidean scalar product is chosen, we prove that the differential operator for the face $\f$ can be chosen (in a unique way) to involve only normal derivatives to $\f$.
Our formulas are valid for  a semi-rational polyhedron and a real sampling parameter $t$, if we allow for \emph{step-polynomial} coefficients, instead of just constant ones.
\end{abstract}

\clearpage

\tableofcontents

\section{Introduction}

Let $\p\subset \R^d$ be a  convex polyhedron of dimension $\ell$. We assume that $\p$ is \emph{semi-rational} (its facets are  affine hyperplanes parallel to rational ones).  Let $h(x)$ be a smooth compactly supported function on $\R^d$.  In this article, we give an asymptotic expansion    of the Riemann sum
\begin{equation*}\label{eq:dilated-distribution}
  \langle R_t(\p),h\rangle = \frac{1}{t^\ell} \sum_{x\in t\p \cap \Z^d} h(\frac{x}{t}),
\end{equation*}
when $t\to +\infty$, $t\in \R$.

The basic example is given by the classical Euler-Maclaurin expansion on a half-line $[s,+\infty[$ with $s\in \R$.
For $x\in \R$, denote by $\{x\}\in [0,1[$ the fractional part of $x$.
 For $n>0$, and any real number $t>0$, we have
\begin{multline}\label{eq:intro-EML-dim1-asymptotic-vertex-s}
\frac{1}{t} \sum_{x\in \Z, x\geq ts}h(\frac{x}{t})=\\
 \int_s^\infty  h(x) dx -
 \sum_{k=1}^{n-1} \frac{1}{t^k} \frac{B_k(\{-t s\})}{k!}h^{(k-1)}(s)+ O( \frac{1}{t^n}).
 \end{multline}

This formula can be considered as an asymptotic expansion when $t\to +\infty$, $t\in \R$, (with  a closed expression for the remainder, see Formula (\ref{eq:EML-dim1-asymptotic-vertex-s})), where we allow the coefficient of $\frac{1}{t^k}$ to be  a so-called \emph{step-polynomial function} of $t$, (here, the Bernoulli polynomial computed at the fractional part $\{-t s\}$). If the end-point  $s$ is an integer and if the  parameter $t$ is also restricted to integers, (\ref{eq:intro-EML-dim1-asymptotic-vertex-s}) becomes the more familiar asymptotic expansion with constant coefficients
\begin{equation}\label{eq:intro-EML-dim1-asymptotic-vertex-integer}
\frac{1}{t} \sum_{x\in \Z, x\geq ts}h(\frac{x}{t})=
 \int_s^\infty  h(x) dx -
 \sum_{k=1}^{n-1} \frac{1}{t^k} \frac{b_k}{k!}h^{(k-1)}(s)+ O( \frac{1}{t^n})
 \end{equation}
 where $b_k=B_k(0)$ are the Bernoulli numbers.

 \bigskip

The asymptotic expansion which we obtain in this paper is a generalization of (\ref{eq:intro-EML-dim1-asymptotic-vertex-s}) to any {semi-rational} polyhedron $\p$. We prove that $R_t(\p)$ has an asymptotic expansion
$$
R_t(\p)\sim \int_\p h(x) + \sum_{k\geq 1} a_k(t)\frac{1}{t^k}
$$
where $a_k(t)$ is a step-polynomial function of $t$ (see Definition \ref{def:step-poly}), and, given a Euclidean scalar product,  we write explicitly   $a_k(t)$ as a sum of integrals over all the (proper) faces of $\p$,
\begin{equation}\label{eq:intro-normal-derivatives}
 a_k(t)= \sum_\f \int_{\f} D_{k,\f,t}\cdot h,
\end{equation}
  where $ D_{k,\f,t}$ is a differential operator of degree $k+ \dim\f-\dim \p$ involving only derivatives normal to the face $\f$, with coefficients which are step-polynomials  functions of $t$. With these   conditions, the operators $D_{k,\f,t}$ are unique.

If $\p$ is rational, then the coefficients of the operators  $D_{k,\f,t}$ are periodic functions of $t$ of period $q$, where $q$ is an integer, and the coefficients $a_k(t)$ of the asymptotic expansion are periodic functions of $t$.
If moreover $\p$ is a lattice polyhedron (every face contains an integral point), the period $q$ is $1$, so
if, in addition, the parameter $t$ is restricted to integral values,  then the coefficients $a_k(t)$ are just constants.

If $\p$ is a polytope (compact polyhedron) and if $h(x)$ is a polynomial, the asymptotic formula is an exact finite expansion and coincides with our previous \emph{local Euler-Maclaurin expansion} for polynomial functions \cite{MR2343137}.
However, we do not use this previous result, and we give an elementary  proof directly in the $C^{\infty}$-context.

Let us explain our approach.

If $\p$ is compact, the  Fourier transform of the distribution $ R_t(\p)$ is given by the holomorphic function of $\xi$
\begin{eqnarray}\label{eq:intro-fourier}
\CF(R_t(\p))(\xi)= \frac{1}{t^\ell}\sum_{x\in t\p \cap \Z^d} \e^{-i \langle\frac{\xi}{t}, x\rangle}.
\end{eqnarray}
If $\p$ is not compact, but pointed (that is $\p$ does not contain any affine space),  the  function
$$
S(\p)(\xi)=\sum_{x\in \p \cap \Z^d} \e^{\langle\xi, x\rangle}
$$
can still be defined as a meromorphic function  with simple real hyperplane singularities around $\xi=0$. The Fourier transform
$\CF(R_t(\p))(\xi)$
is  a generalized function equal to a boundary value of the meromorphic function $ \frac{1}{t^\ell}S(t\p)(-i\frac{\xi}{t})$. The archetype is $\p=[0,\infty[$ where
\begin{equation}
\CF(R_t(\p))(\xi)=\frac{1}{t}\lim_{\epsilon\to 0, \epsilon>0} \frac{1}{1-\e^{-i\frac{\xi}{t}-\epsilon}}.
\end{equation}
When $\p$ is a pointed affine cone, we obtain a canonical  asymptotic expansion of the Fourier transform $\CF(R_t(\p))(\xi)$ in terms of the Laurent series of $S(t\p)(\xi)$ at $\xi=0$, (Theorem \ref{th:asymptotic-cone} in the case of cone with vertex $0$, Theorem \ref{th:asymptotic-cone-real} for an affine cone with any real vertex). This result is easily reduced to the one-dimensional Euler-Maclaurin formula, by subdividing a cone into unimodular ones. Nevertheless, it is the most important observation of this paper.

If the support of the test function is small enough, the Riemann sum for a polyhedron is actually a Riemann sum on one of its supporting cones. Such a  cone is  a product of a linear space and of a pointed affine cone.
So, by a partition of unity argument, the basic result we need is for a pointed affine cone $\c$.
In this case,  by Fourier transform,
 Equation (\ref{eq:intro-normal-derivatives}) is equivalent  to a decomposition (depending of a  choice of a scalar product) of
$S(\c)$
 into a sum of terms indexed by the faces $\f$ of $\c$, where  each term  is  a product of a term involving $\f$ and a term involving the transverse cone to $\c$ along $\f$.
 This expression for $S(\c)$  (Theorem \ref{th:local-EML-cone})  was already obtained in \cite{MR2343137}, but we give here a simpler  proof in the appendix. The easier case of simplicial cones is explained in Subsection \ref{subsec:localsimplicial}.

\bigskip

 Our motivation was to clarify and simplify some previous results on such asymptotic expansions, \cite{MR2184566}, \cite{MR2737411},  \cite{LeflochPelayo2013}. In \cite{MR2737411}, T. Tate obtained an asymptotic expansion with normal derivatives as in (\ref{eq:intro-normal-derivatives}) for a lattice polytope and an integral parameter $t$. In \cite{LeflochPelayo2013}, Y. Le Floch and A. Pelayo showed  in small dimensions how to derive Tate's result from the Todd operator asymptotic formula of V. Guillemin and S. Sternberg \cite{MR2184566}.

 Along the way, we realized that our method permits nice generalizations.

First, our formula is valid for any polyhedron, we do not need to assume that it is compact, nor even pointed.
Another generalization is to consider a real parameter $t$ instead of an integer for the Riemann sum. Even if we start with a rational polyhedron $\p$, the dilated polyhedron $t\p$ is only semi-rational, so we assume only that $\p$ is semi-rational.

\medskip

The proofs  in the case of a semi-rational polyhedron and a real parameter $t$
are very similar to  those for  a lattice polyhedron and an integral parameter, so we give the details only in that case.

\bigskip

 The differential operators in our formula are those which we constructed in \cite{MR2343137}. There,
 for  a face $\f$ of the polyhedron $\p$, the  symbol of operator corresponding to the face $\f$ is the $\mu$-function of the transverse cone to $\p$ along the face $\f$.  S.  Paycha observed that this $\mu$-function can be  defined by a more intuitive method of {algebraic renormalization} \cite{GuoPaychaZhang2015}.   Although this renormalization construction is  easy for the generating function of a simplicial cone, we explain it, in the Appendix,  in the more general context of rational functions with   hyperplane singularities, on any base field. For computing the $\mu$-function of a cone, we wrote a Maple program (\cite{RenormalizationMaple}, with V. Baldoni).

\bigskip

We had to leave open the problem of writing a closed formula for the remainder at order $n$ of the asymptotic expansion,  similar to the one dimensional remainder in (\ref{eq:EML-dim1-asymptotic-vertex-s}), in the spirit of the present article, (a remainder for a simple polytope, in the spirit of the Todd operator formula, is obtained in \cite{MR2184566}). We only give a small computation in Example \ref{ex:remainder}.

We thank the referee for his careful reading.

\section{Notations and basic facts}
\subsection{Various notations}
\subsubsection{}
\noindent $V$ is a finite dimensional vector space over $\R$ with a lattice $\lattice$. The dimension of $V$ is denoted by $d$. The Lebesgue measure $dx$ on $V$ is determined by $\lattice$.

 Elements of $V$ are denoted by Latin letters $x,y ,v \ldots$ and elements of the  dual space $V^*$ are denoted by Greek letters $\xi,\gamma \ldots$. The pairing between $V$ and $V^*$ is denoted by $\langle\xi,x\rangle$.

\subsubsection{Bernoulli numbers and polynomials}
$$
\frac{z \e^{sz}}{\e^z-1} = \sum_{n=0}^\infty B_n(s)\frac{z^n}{n!}, \;\;\;\frac{z }{\e^z-1} = \sum_{n=0}^\infty b_n \frac{z^n}{n!}.
 $$
\begin{eqnarray*}
B_0(s) &=&  1, B_1(s)= s-\frac{1}{2}, B_2(s)= s^2-s+\frac{1}{6}, B_3(s)= s^3-\frac{3}{2}s^2 +\frac{1}{2}s.\\
 b_0&=& 1, \;\;\; b_1 = -\frac{1}{2}, \;\; \;b_2 = \frac{1}{6}, \;\; \; b_{2n+1}= 0 \mbox{ for } n\geq 1.
 \end{eqnarray*}
\subsubsection{Fractional part of a real number}
For $s\in \R$, the fractional part $\{s\}$ is defined by $\{s\}\in [0,1[, s-\{s\}\in \Z$.
\subsubsection{Fourier transform}
The  Fourier transform of a distribution on $V$ is a generalized function on $V^*$. Our convention for the Fourier transform of a test density $\phi(x) dx $  is
$$
\CF(\phi dx)(\xi)=\int_{V}\e^{-i\langle \xi,x\rangle}\phi(x)dx.
$$
\subsection{Polyhedra, cones.}\label{subsection:generating-function}
\subsubsection{}
A \emph{convex  polyhedron} $\p$ in $V$ (we will simply say
 \emph{polyhedron}) is, by definition, the intersection of a finite number of
closed half spaces bounded by   affine hyperplanes. If the
hyperplanes are rational, we say that the polyhedron is \emph{rational}. If
the hyperplanes have rational directions, we say that the polyhedron
is \textit{semi-rational}.
 For instance, if $\p\subset V $ is a
rational polyhedron, $t$~is a real number and $s$~is any point in~$V$, then
the dilated polyhedron $t\p$ and the translated polyhedron
$s+\p$ are semi-rational.

The \emph{lineality space} of a polyhedron $\p$ is the subspace of $y\in V$ such that $x+\R y\subseteq  \p$ for all $x\in \p$. A polyhedron is called pointed when its lineality space is $\{0\}$.

In this article, a cone is a rational polyhedral cone (with vertex $0$) and
an affine cone is a translated set $s+\c$ of a cone $\c$ by any element $s\in V$. A  cone
 is called \emph{simplicial} if it is  generated by independent
elements of $\Lambda $. A  cone  is called \emph{unimodular} if it
is generated by independent elements $v_1,\dots, v_k$ of $\Lambda$ such
that $\{v_1,\dots, v_k\}$ can be completed to an integral basis of
$\Lambda$. An affine cone $\a$ is called \emph{simplicial} (resp.\ \emph{simplicial
unimodular}) if the associated cone is.

For a polyhedron $\f$, the affine span of $\f$ is denoted by $\langle \f\rangle$ and the corresponding linear space is denoted by $\lin ( \f) $.

If $\f$ is a face of a polyhedron $\p$, the lineality space of the supporting cone of $\p$ along $\f$ is just $\lin(\f)$.  The projected cone  in the quotient space $V/\lin(\f)$ is a pointed cone called the \emph{transverse cone} of $\p$ along $\f$.
\begin{definition}\label{def:transverse-cone}
The supporting cone of $\p$ along $\f$ is denoted by $C(\p,\f)$.

The transverse cone of $\p$ along $\f$ is denoted by  $\t(\p,\f)$.
\end{definition}

The Lebesgue measure on $\langle \f\rangle$ is determined by the intersection lattice $\lin(\f)\cap \lattice$. It is denoted by $dm_\f$.

\subsubsection{Subdivision of a cone into unimodular ones}\label{subsection:cone-decomposition}
A subdivision of a cone $\c$ is a family of cones $\c_\alpha$ contained in $\c$, such that the intersection of any two of them is a face of both and belongs to the family and such that $\c$ is the union of the family. Then the indicator  $[\c]$ is equal to a linear combination  $[\c]=\sum_{\alpha}n_\alpha [\c_\alpha]$ with $n_\alpha\in \Z$.

It follows from Minkowski's theorem that any cone has a subdivision into unimodular cones.

\subsection{Discrete and continuous generating functions of a pointed polyhedron $S(\p), I(\p)$}
If $\p\subset V$ is a pointed polyhedron, there is an $a>0$ and a non empty open set $U\subseteq V^*$ such that for $\xi\in U$  and  $x\in \p$ large enough,  $\langle\xi,x\rangle \leq -a \|x\|$. Then the functions
$$
S(\p)(\xi)=\sum_{x\in \p\cap \lattice}\e^{\langle \xi,x\rangle}, \;\;\; I(\p)(\xi)=\int_\p \e^{\langle \xi,x\rangle}dm_\p(x)
$$
are defined and holomorphic on $U+iV^*\subseteq V^*_\C$, and have meromorphic continuation to the whole of $V^*_\C$. If $\p$ is bounded, these functions are holomorphic.

If $\c$ is a simplicial cone with  edge generators $v_1,\ldots,v_\ell\in \Lambda$, and $s\in V$, one has
$$
I(s+\c)(\xi)= (-1)^{\ell}\e^{\langle \xi,s\rangle}\frac{|\det_\lattice(v_j)|}{\prod_j \langle \xi,v_j\rangle},
$$
where the determinant is relative to the intersection lattice $\lin(\c)\cap \lattice$, and
$$
S(s+\c)(\xi)=S(s+\b)(\xi)\prod_j \frac{1}{1-\e^{\langle \xi,v_j\rangle}}
$$
 where $S(s+\b)(\xi)$ is the holomorphic function
  $$
  S(s+\b)(\xi)=\sum_{x\in (s+\sum_j[0,1[v_j)\cap \lattice}\e^{\langle \xi,x\rangle}.
  $$
 If moreover $\c$ is unimodular and the $v_j$'s are primitive vectors, then, for the vertex $s=0$,
 $$
S(\c)(\xi)= \frac{1}{\prod_j(1-\e^{\langle \xi,v_j\rangle})}, \; \; \;  I(\c)(\xi)=(-1)^{\ell}\frac{1}{\prod_j \langle \xi,v_j\rangle}.
 $$
 If $\c$ is simplicial, the function
 $g(\xi)= (\prod_{j =1}^\ell \langle \xi,v_j\rangle) S(s+\c)(\xi)$ is holomorphic near $\xi=0$, in other words, $S(s+\c)(\xi)$ has \emph{simple hyperplane singularities} defined by its edges, near $\xi=0$. If $\xi$ is not in any of these singular hyperplanes, the function
 $ z\mapsto S(s+\c)(z\xi)$ is meromorphic,  with  Laurent series around $z=0$
 $$
 S(s+\c)(z \xi)=\sum_{m=-\ell}^\infty  S(s+\c)_{[m]}(\xi)z^m,
 $$
 where $S(s+\c)_{[m]}(\xi)= \frac{g_{[m+\ell]}(\xi)}{\prod_{j=1}^\ell \langle\xi, v_j\rangle}$ is a homogeneous rational fraction of degree $m$ which we call the homogeneous component of degree $m$ of $S(s+\c)(\xi)$. We write formally
 \begin{equation}\label{eq:Ssc}
  S(s+\c)( \xi)=\sum_{m=-\ell}^\infty  S(s+\c)_{[m]}(\xi).
 \end{equation}

If $\c$ is no longer assumed simplicial,  then $\c$ can be subdivided into simplicial cones without adding edges. Therefore, again, $S(s+\c)(\xi)$ has simple hyperplane singularities near $\xi=0$ (defined by the edges) and the decomposition into homogeneous components (\ref{eq:Ssc}) still holds.

\section{Asymptotic expansions for Riemann sums over a  cone }\label{section:cone-vertex-0}

\subsection{Dimension one}
 We recall  the dimension one Euler-Maclaurin summation formula for a test function $h(x)$ on a half line, (\cite{MR2312338}, Theorem 9.2.2). For the moment, we consider only the half-line $[0,+\infty[$.
 \begin{multline}\label{eq:EML-dim1-usual}
  \sum_{x\in \Z, x\geq 0}h(x)= \\
  \int_0^\infty h(x)dx -\sum_{k=1}^n  \frac{b_k}{k!}h^{(k-1)}(0)-  \int_0^\infty   \frac{B_n(\{- x\})}{n!}h^{(n)}(x)  \, dx.
 \end{multline}
 We write the whole right hand side as an integral over the half line.
\begin{multline}\label{eq:EML-dim1}
    \sum_{x\in \Z, x\geq 0}h(x)=
    \int_0^\infty \left(h(x)+\sum_{k=1}^n  \frac{b_k}{k!}h^{(k)}(x)-  \frac{B_n(\{- x\})}{n!}h^{(n)}(x) \right) \, dx.
\end{multline}
For $t>0$, consider the scaled function
$$
h_t(x)=\frac{1}{t}h(\frac{x}{t}).
$$
Substituting $h_t$ for $h$  in this formula, and changing variables in the integral on the right-hand-side, we obtain, for any $t>0$,
\begin{multline}\label{eq:EML-dim1-asymptotic-vertex0}
\frac{1}{t} \sum_{x\in \Z, x\geq 0}h(\frac{x}{t})=\\
 \int_0^\infty  \left( h(x)  +
 \sum_{k=1}^{n-1} \frac{1}{t^k} \frac{b_k}{k!}h^{(k)}(x)+
  \frac{1}{t^n}  \frac{b_n-B_n(\{-t x\})}{n!} h^{(n)}(x)\right)dx.
\end{multline}
(\ref{eq:EML-dim1-asymptotic-vertex0}) gives the asymptotic expansion when $t\to +\infty$,
$$
\frac{1}{t} \sum_{x\in \Z, x\geq 0}h(\frac{x}{t})\sim \sum_{k=0}^{\infty} \frac{1}{t^k} \int_0^\infty \frac{b_k}{k!}h^{(k)}(x),
$$
with a closed formula for the remainder.
\subsection{Fourier transforms and boundary values}
 For a polyhedron $\p\subset V$, consider the distributions  given on a test function $h$ on $V$ by
$ \int_\p h(x)dx $ and $\sum_{x\in \p\cap \lattice}h(x)$.
Clearly, they are  tempered distributions, therefore we can consider their Fourier transforms.

We recall a well known result on Fourier transforms.
\begin{definition}\label{def:limlambda}
If $F(\xi)$ is a rational function on $V^*$  whose denominator $g(\xi)$ is a product of linear forms  and if   $\lambda\in V^*$ is  such that $g(\lambda)\neq 0$, the following formula defines a tempered generalized function on $V^*$:
    $$
    \lim\limits_\lambda( F(\xi)) =\lim_{\epsilon\to 0, \epsilon > 0 }F(\xi+ i \epsilon \lambda).
    $$
\end{definition}

\begin{example}
    Let $v$ be a non zero vector in $V$ and let $\lambda$ be such that $\langle\lambda,v\rangle < 0$. Then,
 $\lim\limits_\lambda (\frac{1}{i \langle \xi,v\rangle})$  is the Fourier transform of the Heaviside distribution of the half-line $\R_{\geq 0} v$, given by $\int_0^\infty h(tv)dt$.

 When $V=\R$, the Fourier transform of the Heaviside distribution of the half-line $\R_{\geq 0}$ is more often denoted by  $\frac{1}{i(\xi-i0)}$.
\end{example}

Of course, such  boundary values are defined for more general types of meromorphic functions on $V^*_\C$. In this paper we will need only the rational functions of Definition \ref{def:limlambda} and the generating functions of cones. The following result is immediate.
\begin{proposition}\label{prop:fourier-transform}
    Let $\c\subset V$ be a pointed cone (with vertex $0$). Let $\lambda\in V^*$ be any element such that $-\lambda$ lies in the dual cone of $\c$. Then,

 \noindent (i) the boundary value
 $$    \lim\limits_\lambda(I(\c)(-i\xi))= \lim_{\epsilon\to 0, \epsilon>0 }I(\c)(-i\xi+\epsilon \lambda)
 $$
    exists and it is  the Fourier transform of the distribution $\int_\c h(x)d x$,

\noindent (ii) the boundary value
    $$
    \lim\limits_\lambda(S(\c)(-i\xi))= \lim_{\epsilon\to 0, \epsilon>0 }S(\c)(-i\xi+\epsilon \lambda)
    $$
    exists and it is  the Fourier transform of the distribution $\sum_{x\in \c \cap \lattice} h(x)$.
    In particular, these boundary values do not depend on the choice of $-\lambda$ in the dual cone.
\end{proposition}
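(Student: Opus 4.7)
The plan is a uniform regularization argument via dominated convergence, which handles (i) and (ii) in parallel. Since $-\lambda$ lies in the (interior of the) dual cone of $\c$, as in the set-up of $I(\c)(\xi)$ and $S(\c)(\xi)$ recalled just above, there exists $a>0$ with $\langle\lambda,x\rangle\leq -a\|x\|$ for every $x\in\c$. Consequently, for each $\epsilon>0$ the function $\xi\mapsto I(\c)(-i\xi+\epsilon\lambda)$ (respectively $S(\c)(-i\xi+\epsilon\lambda)$) is smooth on $V^*$, given by the absolutely convergent integral $\int_\c \e^{-i\langle\xi,x\rangle+\epsilon\langle\lambda,x\rangle}\,dm_\c(x)$ (respectively the absolutely convergent sum $\sum_{x\in\c\cap\lattice}\e^{-i\langle\xi,x\rangle+\epsilon\langle\lambda,x\rangle}$). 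Here analytic continuation of the meromorphic function is not needed: the hypothesis makes the original integral/series representations valid for every $\epsilon>0$.

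For (i), I would pair against an arbitrary Schwartz function $\phi\in\mathcal{S}(V^*)$. The joint absolute convergence on $V^*\times\c$, which follows from the exponential decay $\e^{-a\epsilon\|x\|}$ in $x$ combined with Schwartz decay of $\phi$ in $\xi$, legitimizes an application of Fubini's theorem, yielding
\[
\int_{V^*} I(\c)(-i\xi+\epsilon\lambda)\,\phi(\xi)\,d\xi \;=\; \int_\c \e^{\epsilon\langle\lambda,x\rangle}\,(\CF\phi)(x)\,dm_\c(x),
\]
where $(\CF\phi)(x):=\int_{V^*}\e^{-i\langle\xi,x\rangle}\phi(\xi)\,d\xi$ in agreement with the Fourier convention fixed above. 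Because $|\e^{\epsilon\langle\lambda,x\rangle}|\leq 1$ on $\c$ and $\CF\phi$ is Schwartz, hence in $L^1(\c,dm_\c)$, dominated convergence as $\epsilon\to 0^+$ gives the limit $\int_\c(\CF\phi)(x)\,dm_\c(x)$. By the very definition of Fourier transform of a distribution, this is the pairing $\langle\CF T,\phi\rangle$, where $T$ is the distribution $h\mapsto\int_\c h(x)\,dm_\c(x)$. In particular the boundary value exists and is the intrinsic distribution $\CF T$, so it does not depend on the choice of $\lambda$.

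Part (ii) is proved verbatim, replacing the integral $\int_\c\cdot\,dm_\c$ by the sum $\sum_{x\in\c\cap\lattice}$ and Fubini by Tonelli--Fubini for the sum-integral interchange; the pointwise bound $|\e^{\epsilon\langle\lambda,x\rangle}|\leq 1$ is unchanged on $\c\cap\lattice$, and the dominating function $|(\CF\phi)(x)|$ is summable over $\c\cap\lattice$ because $\CF\phi$ is Schwartz. I do not anticipate any substantive obstacle: the entire argument reduces to absolute convergence supplied by the choice of $\lambda$, followed by a single application of dominated convergence. The only technical care required is the justification of the interchanges of sum/integral with the pairing against $\phi$, which is routine from the exponential bounds above.
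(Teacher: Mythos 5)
Your proof is correct and is precisely the standard argument that the paper leaves implicit (it introduces the proposition with ``The following result is immediate'' and gives no proof): regularize by $\epsilon\lambda$, where the defining integral and series converge absolutely, pair against a Schwartz function, interchange by Fubini, and pass to the limit by dominated convergence using $|\e^{\epsilon\langle\lambda,x\rangle}|\le 1$ on $\c$ together with the integrability, resp.\ summability, of $\CF\phi$ over $\c$, resp.\ $\c\cap\lattice$; the limit is the intrinsic distribution $\CF T$, whence independence of $\lambda$. One remark: you correctly read the hypothesis as $-\lambda$ lying in the \emph{interior} of the dual cone (equivalently $\langle\lambda,x\rangle\le -a\|x\|$ on $\c$), which is the intended reading, since for $-\lambda$ on the boundary (e.g.\ $\lambda=0$) the regularization would fail.
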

\begin{example}
   For $\c= \R_{\geq 0}$, the  Fourier transform of the distribution    $ h\mapsto \sum_{n=0}^\infty h(n)$ is the boundary value
    $    \lim\limits_{\epsilon\to 0, \epsilon>0 }\frac{1}{1-\e^{-i\xi-\epsilon }}$.
\end{example}

\subsection{Asymptotic expansion of the Fourier transform of a Riemann sum over a cone}
For a polyhedron $\p\subseteq V$,
with $\dim\p=\ell$,   we consider the  Riemann sum
\begin{equation}\label{eq:dilated-distribution}
  \langle R_t(\p),h\rangle = \frac{1}{t^\ell} \sum_{x\in t\p \cap \lattice} h(\frac{x}{t}).
\end{equation}

If $\c$ is a pointed cone of dimension $\ell$, it follows immediately from Proposition \ref{prop:fourier-transform} that the Fourier transform of $R_t(\c)$ is given by
\begin{equation}\label{eq:dilated-fourier}
    \CF(R_t(\c))(\xi)=  \frac{1}{t^\ell} \lim\limits_\lambda (S(\c)(-i \frac{\xi}{t})).
\end{equation}
When $t\to \infty$, $\frac{\xi}{t}\to 0$.   Replacing  $\xi$ by $-i\frac{\xi}{t}$  in  the formal expansion  into homogeneous components   $  S(\c)(\xi)=\sum_{k=-\ell}^\infty S(\c)_{[k]}(\xi) $, we obtain formally \begin{equation}\label{eq:dilated-formal-homogeneous}
\frac{1}{t^\ell} S(\c)(-i \frac{\xi}{t})=\sum_{k=0}^\infty \frac{1}{t^k} S(\c)_{[k-\ell]}(-i\xi).
\end{equation}
It is a remarkable fact that this formal expansion leads to an  asymptotic expansion of boundary values. We are going to derive it  from the dimension one Euler-Maclaurin formula.
\begin{theorem}\label{th:asymptotic-cone}
Let $\c\subset V $ be a  pointed cone of dimension $\ell$. Let $\lambda\in V^*$ be such that $ -\lambda$ lies in the dual cone of  $\c$.
 Consider the  distribution on $V$ given by the Riemann sum
$$
 \langle R_t(\c) ,h\rangle =\frac{1}{t^\ell}\sum_{x\in \c \cap \lattice}h(\frac{x}{t}).
$$
It  has an asymptotic expansion when $t\to \infty$, the Fourier transform of which is given by
\begin{equation}\label{eq:EMLasymptotic-anycone}
 \CF( R_t(\c) )=t^{-\ell} \lim\limits_\lambda (S(\c)(-i \frac{\xi}{t})) \sim  \sum_{k=0}^\infty \frac{1}{t^k}\lim\limits_\lambda S(\c)_{[k-\ell]}(-i\xi).
\end{equation}
\end{theorem}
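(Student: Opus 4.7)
The plan is to reduce the statement, in two stages, to the one-dimensional Euler--Maclaurin formula (\ref{eq:EML-dim1-asymptotic-vertex0}), and then to take Fourier transforms. The underlying intuition is that the formal substitution $\xi \mapsto -i\xi/t$ in the Laurent expansion of $S(\c)$ around $0$ (equation (\ref{eq:dilated-formal-homogeneous})) already predicts the right answer; the content of the theorem is that this formal identity holds as an asymptotic expansion of tempered distributions once one reads the left-hand side as a boundary value.

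\emph{Step 1: reduction to unimodular cones.} By Minkowski's theorem (Section \ref{subsection:cone-decomposition}), $\c$ admits a subdivision into unimodular cones, giving an indicator identity $[\c] = \sum_\alpha n_\alpha [\c_\alpha]$ with $n_\alpha \in \Z$. This identity passes simultaneously to the distributions $R_t(\c) = \sum_\alpha n_\alpha R_t(\c_\alpha)$ and to the meromorphic germs $S(\c) = \sum_\alpha n_\alpha S(\c_\alpha)$ near $\xi = 0$; comparing homogeneous components yields $S(\c)_{[m]} = \sum_\alpha n_\alpha S(\c_\alpha)_{[m]}$ as homogeneous rational functions. Since $\c$ is pointed, the interior of its dual cone is nonempty and is contained in the interior of each $\c_\alpha^\vee$; choosing $-\lambda$ there (and perturbing, if necessary, to avoid the extra edge hyperplanes coming from the subdivision) makes all boundary values $\lim\limits_\lambda$ simultaneously defined, so the identity passes to them. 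It therefore suffices to prove (\ref{eq:EMLasymptotic-anycone}) for a unimodular cone.

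\emph{Step 2: reduction to dimension one.} For a unimodular $\c$ with primitive edges $v_1,\dots,v_\ell$ forming part of a $\Z$-basis of $\lattice$, the change of coordinates $x = \sum_j y_j v_j$ reduces the situation to the model $\c = \R_{\geq 0}^\ell \subset \R^\ell$. The Riemann sum $R_t(\c)$ is then the tensor product of $\ell$ one-dimensional Riemann sums over $\R_{\geq 0}$, while $S(\c)(\xi) = \prod_{j=1}^\ell (1-\e^{\xi_j})^{-1}$ factors as a product of one-variable generating functions, whose Laurent components multiply accordingly. Since Fourier transform sends tensor products of tempered distributions to tensor products, and since formal series in $1/t$ multiply coefficientwise, this reduces the theorem to the case $\c = \R_{\geq 0}$.

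\emph{Step 3: the one-dimensional case.} Here I would Fourier-transform the exact identity (\ref{eq:EML-dim1-asymptotic-vertex0}). By Proposition \ref{prop:fourier-transform}, the left-hand side transforms into $\frac{1}{t}\lim\limits_\lambda S(\R_{\geq 0})(-i\xi/t)$. For each $k \geq 0$, the distribution $h \mapsto \int_0^\infty h^{(k)}(x)\,dx$ has Fourier transform $(i\xi)^k\lim\limits_\lambda(1/(i\xi))$, which by the Bernoulli generating series $1/(1-\e^\xi) = -\sum_{k\geq 0}(b_k/k!)\,\xi^{k-1}$ is proportional to $\lim\limits_\lambda S(\R_{\geq 0})_{[k-1]}(-i\xi)$ with precisely the coefficient $-k!/b_k$; thus the $k$-th term of (\ref{eq:EML-dim1-asymptotic-vertex0}) Fourier-transforms to $t^{-k}\lim\limits_\lambda S(\R_{\geq 0})_{[k-1]}(-i\xi)$. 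The remainder $\int_0^\infty (b_n - B_n(\{-tx\}))/n!\cdot h^{(n)}(x)\,dx$ is uniformly bounded in $t$ by a Schwartz seminorm of $h$ (because $B_n(\{\cdot\})$ is bounded on $\R$), so the $t^{-n}$ remainder in (\ref{eq:EML-dim1-asymptotic-vertex0}) translates, after Fourier transform, into a $O(1/t^n)$ remainder in the sense of tempered distributions. I expect the main obstacle to lie precisely in this last step: keeping the signs and normalizations consistent when passing between the substitution $\xi \mapsto -i\xi/t$ and the Fourier transforms of the Bernoulli-number terms, and verifying that the pointwise remainder bound really does give a tempered-distribution-valued asymptotic expansion. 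The two preceding reductions are then essentially bookkeeping.
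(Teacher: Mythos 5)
Your proposal is correct and follows essentially the same route as the paper: subdivide $\c$ into unimodular cones (the paper does this after, rather than before, the dimension-one case), reduce the unimodular case to $\R_{\geq 0}$ by the product structure, and Fourier-transform the exact one-dimensional Euler--Maclaurin identity (\ref{eq:EML-dim1-asymptotic-vertex0}), whose bounded Bernoulli remainder gives the $O(t^{-n})$ control. Only your intermediate constants in Step 3 are slightly off under the paper's convention (the Fourier transform of $h\mapsto\int_0^\infty h^{(k)}$ is $(-i\xi)^k\lim\limits_\lambda\frac{1}{i\xi}$, and the ratio $-k!/b_k$ is ill-posed for odd $k\geq 3$ where $b_k=0$), but your final identification of the $k$-th term as $t^{-k}\lim\limits_\lambda S(\R_{\geq 0})_{[k-1]}(-i\xi)$ agrees with (\ref{eq:dim1-fourier}).
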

\begin{proof} Let us first look at the case where $\c=\R_{\geq 0}$. By writing  the Fourier transform of the asymptotic expansion (\ref{eq:EML-dim1-asymptotic-vertex0}), we obtain immediately (\ref{eq:dim1-fourier}) below. (This is the reason why we rewrote  Euler-Maclaurin formula (\ref{eq:EML-dim1}) as an integral over the half-line).

\begin{eqnarray}\label{eq:dim1-fourier}
\CF( R_t(\R_{\geq 0}) ) &\sim & \sum_{k=0}^\infty \frac{1}{t^k}\frac{b_k}{k!}
 \frac{(-i\xi)^k }{i(\xi-i0)}.
\end{eqnarray}

On the other hand,   we write the well known Laurent series
$$
S(\R_{\geq 0})(\xi )= \frac{1}{1-\e^{\xi}}= (\frac{-1}{\xi}) \sum_{k=0}^\infty \frac{b_k}{k!}\xi^k,
$$
hence
$$
 S(\R_{\geq 0})_{[k-1]}(-i\xi-\epsilon)=  \frac{b_k}{k!}\frac{(-i\xi-\epsilon)^k}{i\xi+\epsilon}.
$$
This  proves the theorem in the case where $\c=\R_{\geq 0}$, and so when $\c$ has dimension one. For the general case, we  consider a subdivision   of $\c$ into unimodular cones $\c_\alpha$, so that $[\c]=\sum_\alpha n_\alpha [\c_\alpha]$. The cones $\c_\alpha$ are contained in $\c$, so that $-\lambda$ belongs to the dual cone of $\c_\alpha$ as well. Let $\ell_\alpha$ be the dimension of $\c_\alpha$.
 For a test function $h$ and $t>0$, we have
\begin{multline*}
\langle R_t(\c),h\rangle = \frac{1}{t^\ell} \sum_{x\in \c\cap \lattice}h(\frac{x}{t})
=\sum_\alpha n_\alpha \frac{1}{t^\ell} \sum_{x\in \c_\alpha \cap \lattice}h(\frac{x}{t})\\
=\sum_\alpha n_\alpha \frac{1}{t^{\ell-\ell_\alpha}} \langle R_t(\c_\alpha) ,h\rangle.
\end{multline*}
Thus we may assume that $\c$ is unimodular. In this case the theorem follows immediately from the dimension one case.
\end{proof}

\subsection{Asymptotic expansions of Riemann sums over cones in terms of differential operators}

In this section, we  explain how one obtains  formulas of Euler-Maclaurin type for the asymptotic expansion of the Riemann sum itself, by taking inverse Fourier transforms in  Theorem \ref{th:asymptotic-cone}.

By Theorem  \ref{th:asymptotic-cone}, the coefficients of the asymptotic expansion of the Riemann sum are given by
\begin{equation}\label{eq:EMLasymptotic-anycone-Fourier-inverse}
\langle R_t(\c),h\rangle  \sim \sum_{k=0}^\infty \frac{1}{t^k}\langle F_k,h\rangle,  \mbox{ with }
F_k = \CF^{-1}(\lim\limits_\lambda (S(\c)_{[k-\ell]}(-i\xi))).
\end{equation}

For $k=0$, we have $S(\c)_{[-\ell]}= I(\c)$, and, (Proposition \ref{prop:fourier-transform},(i)),
$$
\langle\CF^{-1}(\lim\limits_\lambda I(\c)(-i\xi)), h\rangle= \int_\c h(x)dm_\c(x).
$$
So we recover the fact that  $ \langle F_0,h\rangle$ is the integral of $h$ over $\c$ (with respect to the Lebesgue measure $dm_\c(x)$ defined by the intersection lattice $\lin(\c)\cap \lattice$).

For $k=1$, it is well known that
$$
\langle F_1,h\rangle =\frac{1}{2}\int_{\partial \c}h
$$
where ${\partial \c}$ denotes the boundary of $\c$, union of the facets of $\c$, and the Lebesgue measure on each facet $\f$ is again  defined by the intersection lattice $\lin(\f)\cap \lattice$. This formula is true for any lattice polyhedron. For a unimodular  cone,  it follows easily from (\ref{eq:EMLasymptotic-cone}). If the cone is not unimodular, we do a  subdivision, as in the proof of Theorem \ref{th:asymptotic-cone}.

Let $(v_1,\ldots, v_n)$ be  the generators of the edges of $\c$. The homogeneous component $S(\c)_{[k-\ell]}(\xi)$
can be written (in many ways) as a sum
\begin{equation}\label{eq:differential-operators}
S(\c)_{[k-\ell]}(\xi)=\sum_J \frac{P_J(\xi)}{\prod_{j\in J}\langle \xi,v_j\rangle}
\end{equation}
  where $J\subseteq (1,\ldots, n)$ is such that  $(v_j,{j\in J})$ are linearly independent, and $P_J$ is a homogeneous polynomial of degree $k-\ell+|J|$. Taking inverse Fourier transform of boundary values,  using again Proposition \ref{prop:fourier-transform}(i),   we obtain
$$
 \langle F_k,h\rangle
     = \sum_J a_J \int_{\c_J}P_J(\frac{\partial}{\partial x})\cdot h(x) dm_{\c_J}(x),
$$
where $\c_J$ is the cone generated by $(v_j,j\in J)$ and $a_J=\frac{1}{|\det(v_j,j\in J)|}$ (determinant with respect to the intersection lattice on the subspace spanned by $(v_j,j\in J)$).

\begin{example}
Let  $V=\R^2$ with lattice $\Z^2$ , standard basis $(e_1,e_2)$  and coordinates $x_1,x_2$.
Let $\c$ be the cone generated by  $v_1=e_1, v_2=e_1+e_2$, with
    edges $\f_1=\R_{\geq 0} v_1$,  $\f_2=\R_{\geq 0} v_2$. The homogeneous components of $S(\c)$ can be computed by taking the product of the Todd series
 $$
\frac{1}{1-\e^{v_j}}= -\frac{1}{v_j}(1+\sum_{k\geq 1}\frac{b_k}{k!}v_j^k).
$$
$$
S(\c)(\xi_1,\xi_2) = \frac{1}{(1-\e^{\xi_1})(1-\e^{\xi_1+\xi_2})}
= \frac{1}{\xi_1(\xi_1+\xi_2)}-\frac{1}{2\xi_1}-\frac{1}{2(\xi_1+\xi_2)}
$$
$$
+\frac{1}{4}+\frac{1}{12}\frac{\xi_1+\xi_2}{\xi_1}+ \frac{1}{12}\frac{\xi_1}{\xi_1+\xi_2}+\cdots.
$$
We write the homogeneous component of degree $0$ in two different ways.
\begin{eqnarray}
\label{eq:A}
S(\c)_{[0]}(\xi_1,\xi_2)&= &\frac{1}{4}+\frac{1}{12}\frac{\xi_1+\xi_2}{\xi_1}+ \frac{1}{12}\frac{\xi_1}{\xi_1+\xi_2}\\
\label{eq:B}
&=&\frac{3}{8}+ \frac{1}{12}\,{\frac {\xi_2}{\xi_1}}+\frac{1}{24}\,{\frac {\xi_1-\xi_2}{\xi_1+\xi_2}}.
\end{eqnarray}
The corresponding expressions for the term $F_2$ in the asymptotic expansion (\ref{eq:EMLasymptotic-anycone-Fourier-inverse}) of $R_t(\c)$ are
\begin{eqnarray}
\label{eq:AA}
\langle F_2,h\rangle &=&\frac{1}{4} h(0,0) -\frac{1}{12}\int_{\f_1}\partial_{v_2} h
-\frac{1}{12}\int_{\f_2}\partial_{v_1} h \\
\label{eq:BB}
   &=& \frac{3}{8} h(0,0)-\frac{1}{12}\int_{\f_1}\partial_{x_2} h
-\frac{1}{24}\int_{\f_2}(\partial_{x_1}-\partial_{x_2})h.
\end{eqnarray}
\end{example}

\section{Simplicial cones and Normal derivatives formula}

\subsection{Asymptotic expansions of Riemann sums and Todd operator}
In this subsection, we compare  Theorem \ref{th:asymptotic-cone}
with  Guillemin-Sternberg  formula for a unimodular cone. This comparison  will not be used in this article.

Let $\c$ be a unimodular cone with primitive edge generators $v_j$.
One can compute the distributions $F_k$   by expanding $S(\c)=\prod_j \frac{1}{1-\e^{v_j}}$ as a product of Todd series.  It is easy to see that the resulting expansion can be written in a unique way in the form
$$
\sum_{m\geq 0, J\subseteq (1,\ldots, \ell ) }\frac{G_{m,J}}{\prod_{j\in J}v_j},
$$
where $G_{m,J}(\xi)$ is a homogeneous polynomial of degree $m$ belonging to  the algebra generated by $v_k, k\notin J$.
Our theorem \ref{th:asymptotic-cone} gives thus the following explicit formula:

\begin{equation}\label{eq:a-la-LeFloch-Pelayo}
\frac{1}{t^\ell}\sum_{x\in \c \cap \lattice} h(\frac{x}{t})\sim
\sum_{k=0}^\infty  \frac{1}{t^k}
\sum_{\stackrel{m\geq 0, J}{ m+\ell-|J|=k} }\int_{\f_J} G_{m,J}(\frac{\partial}{\partial_x})\cdot h(x)\, dm_{\f_J}(x).
\end{equation}
Here $\f_J$ is the face generated by the edges $(v_j, j\in J)$, the corresponding constant coefficient  differential operator
$G_{m,J}(\partial)$ is  homogeneous of degree $m$ and belongs  to the algebra generated by $(\partial_{ v_k}, k\notin J)$.

Le Floch-Pelayo \cite{LeflochPelayo2013} observed that (\ref{eq:a-la-LeFloch-Pelayo}) follows from
Guillemin-Sternberg formula for the asymptotic expansion  on a Riemann sum over  the unimodular cone $\c$,  in terms of a Todd operator
(Theorem 3.2 in  \cite{MR2394539}).
Let us recall this formula.
The  unimodular cone $\c$ with primitive edge generators $v_j$ is defined by the inequalities $\langle\lambda_j, x\rangle\leq 0$,
where
$\lambda_j$ is the basis of the lattice $\Lambda^*$ dual to $v_j$. Let
 $\c(a)$ be the affine  cone defined by the inequalities $\langle\lambda_j, x\rangle\leq a_j$.
Let  $$Todd(z)=\frac{z}{1-e^{-z}}=\sum_{k=0}^\infty (-1)^k \frac{b_k}{k!} z^k.$$
Consider the formal power series (in powers of $\frac{1}{t}$) of constant coefficients differential operators:
$$
Todd \biggl(\frac{1}{t}\frac{\partial}{\partial_a}\biggr)=\prod_{j=1}^d Todd\biggl(\frac{1}{t}\frac{\partial}{\partial_{a_j}}\biggr).
$$
Let $h(x)$ be a test function on $V$.  Then, when  $t\to \infty$, Guillemin-Sternberg state that
\begin{equation}\label{eq:Guillemin-Sternberg}
 \frac{1}{t^d} \sum_{x\in \c\cap \lattice} h(\frac{x}{t})\sim \left(Todd \biggl(\frac{1}{t}\frac{\partial}{\partial_a}\biggr)\cdot \int_{\c(a)} h(x) dx\right)|_{a=0}.
\end{equation}
As
$\c(a)=\sum_j a_j v_j + \c$,
 one sees easily that  (\ref{eq:Guillemin-Sternberg}) is precisely the Fourier transform of (\ref{eq:EMLasymptotic-anycone}).

\subsection{ Local Euler-Maclaurin formula}\label{subsec:localsimplicial}

Our method in the present article is to give expressions of type (\ref{eq:BB}), in terms of derivatives normal to the faces with respect to a given Euclidean scalar product.   The method is much simpler for simplicial cones, so we  sketch it now in this case. We will discuss directly the non simplicial case in the next section, and the result of this subsection are not needed. However, it might be useful to understand  this case.

Let $(v_1,v_2,\ldots,v_d)$ be a basis of $V$.
For any subset $J$ of $\{1,2,\ldots,d\}$, let $L_J$ be the subspace of $V$ spanned by $(v_j, j\in J)$ and let $C_J\subseteq V$ be its orthogonal component (for the given scalar product).

Let $K$ be a subset of $\{1,2,\ldots,d\}$. It is easy to see that
any rational function  of the form
$R(\xi)=\frac{P(\xi)}{\prod_{j\in K} \langle\xi,v_j\rangle}$
 can be written in a unique way as $R(\xi)=\sum_{J\subseteq K} \frac{P_J(\xi)}{\prod_{j\in J}\langle\xi,v_j\rangle}$ with $P_J\in \Sym(C_J)$, the symmetric algebra of $C_J$.
Indeed, write $P=Q+\sum _{j\in K} Q_j v_j$, with $Q\in \Sym(C_K)$. Then
$R= \frac{Q}{\prod_{j\in K}v_j}+ \sum_{j\in K} \frac{Q_j}{\prod_{k\in K,k\neq j}v_k}$.
Then iterate on
each term $\frac{Q_j}{\prod_{k\in J,k\neq j}v_k}$.
The uniqueness of the $P_J$'s  is also easily proved by induction on $|K|$, by taking  partial residues.

Now if $\c$ is a simplicial cone, with generators $v_1,\ldots, v_\ell$, any homogeneous component
$S(\c)_{[k-\ell]}(\xi)$ is of the form $\frac{P(\xi)}{\prod_{j=1}^\ell \langle\xi,v_j\rangle}$. By the preceding discussion,
$S(\c)_{[k-\ell]}(\xi)=\sum_{J\subseteq \{1,\ldots,\ell\}} \frac{P_{k,J}}{\prod_{j\in J} v_j}$ with $P_{k,J}\in \Sym(C_J)$.
For any $J\subseteq \{1,2,\ldots,\ell\}$, the cone generated by $(v_j,j\in J)$  is a face $\f$ of $\c$ and $I(\f)$ is equal to
$(-1)^{|J|} \frac{|\det(v_j,j\in J)|}{\prod_{j\in J} v_j}.$
Gathering the homogeneous terms face by face, we obtain the following decomposition of
$S(\c)(\xi)=\sum_{k\geq -\ell}S(\c)_{[k]}(\xi)$, indexed by all the faces $\f$ of $\c$.
\begin{lemma}\label{lem:musimplicial}
$$S(\c)(\xi)=\sum_\f M_\f(\xi) I(\f)(\xi)$$
where $M_\f$ is a holomorphic function of $\xi$
depending only of the orthogonal projection  of $\xi$ on the subspace $\f^{\perp}$ of $V^*$.
\end{lemma}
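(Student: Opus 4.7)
The strategy is to apply the unique decomposition from the paragraph preceding the lemma (with $K=\{1,\ldots,\ell\}$) to each homogeneous component of $S(\c)$, then regroup by face. The explicit formula $I(\f)(\xi) = (-1)^{|J|}|\det_\Lambda(v_j,j\in J)|/\prod_{j\in J}\langle\xi,v_j\rangle$ for the face $\f$ with edges $(v_j,j\in J)$ will let us identify the factor $1/\prod_{j\in J}\langle\xi,v_j\rangle$ with $I(\f)$ up to a scalar.

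First I would set $g(\xi) := \prod_{j=1}^\ell \langle\xi,v_j\rangle \cdot S(\c)(\xi) = \prod_{j=1}^\ell \frac{\langle\xi,v_j\rangle}{1-\e^{\langle\xi,v_j\rangle}}$ and note that $g$ is holomorphic in a neighborhood of $0$. Expanding it as a convergent Taylor series $g = \sum_{k \geq 0} g_k$ with $g_k$ homogeneous of degree $k$ gives $S(\c)_{[k-\ell]}(\xi) = g_k(\xi)/\prod_{j=1}^\ell \langle\xi,v_j\rangle$. For each $k$ the decomposition from the preceding paragraph then yields
\[
S(\c)_{[k-\ell]}(\xi) = \sum_{J\subseteq \{1,\ldots,\ell\}} \frac{P_{k,J}(\xi)}{\prod_{j\in J}\langle\xi,v_j\rangle},\qquad P_{k,J}\in\Sym(C_J),
\]
where $P_{k,J}$ is homogeneous of degree $k-\ell+|J|$ (in particular $0$ when this is negative). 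Summing over $k$ and setting $N_J(\xi) := \sum_{k\geq 0} P_{k,J}(\xi)$, I regroup to obtain
\[
S(\c)(\xi) = \sum_{J\subseteq\{1,\ldots,\ell\}} \frac{N_J(\xi)}{\prod_{j\in J}\langle\xi,v_j\rangle} = \sum_{\f} M_\f(\xi)\,I(\f)(\xi),
\]
with $M_\f(\xi) := (-1)^{|J|} N_J(\xi)/|\det_\Lambda(v_j,j\in J)|$ for $\f$ the face of $\c$ spanned by $(v_j,j\in J)$.

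It remains to verify the two properties of $M_\f$. That $M_\f$ depends only on the orthogonal projection of $\xi$ onto $\f^\perp$ is clear term by term: each $P_{k,J}$ lies in $\Sym(C_J)$, i.e.\ is a polynomial in the linear forms $\xi\mapsto \langle\xi,c\rangle$ for $c\in C_J$, and under the scalar product identification $V\simeq V^*$ the subspace $C_J=\lin(\f)^\perp\subseteq V$ corresponds to $\f^\perp\subseteq V^*$. The main technical point, and the one I see as the principal obstacle, is the holomorphy of $M_\f$, equivalently the convergence of the series $N_J = \sum_k P_{k,J}$ in a neighborhood of $0$. This follows from the structural fact that the decomposition of the preceding paragraph realizes a graded linear isomorphism $\Sym^k(V)\xrightarrow{\sim}\bigoplus_J \Sym^{k-\ell+|J|}(C_J)$ coming from the direct sum decomposition $\Sym(V)=\bigoplus_J \Sym(C_J)\cdot \prod_{j\in K\setminus J}v_j$; its coordinate maps are given by fixed linear formulas in the coefficients of $g_k$ and therefore extend continuously from polynomials to formal, and convergent, power series. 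Hence $N_J$, and so $M_\f$, is holomorphic on the same neighborhood of $0$ as $g$.
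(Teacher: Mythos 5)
Your first two paragraphs are exactly the paper's own proof of Lemma \ref{lem:musimplicial}: the paper writes each homogeneous component as $S(\c)_{[k-\ell]}=\sum_{J}P_{k,J}/\prod_{j\in J}\langle\xi,v_j\rangle$ with $P_{k,J}\in\Sym(C_J)$, using the unique decomposition stated just before the lemma, and then gathers the terms face by face via $I(\f)=(-1)^{|J|}\,|\det_\lattice(v_j,j\in J)|/\prod_{j\in J}\langle\xi,v_j\rangle$. So the algebraic core of your argument is the intended one, and the identification of $M_\f$ with a function of the projection of $\xi$ on $\f^{\perp}$ is handled correctly through $P_{k,J}\in\Sym(C_J)$.

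The one point where you go beyond the paper's sketch, the holomorphy of $M_\f$, is also the one point where your justification does not yet stand on its own. Knowing that $g_k\mapsto P_{k,J}$ is a fixed linear map in each degree only produces $N_J=\sum_k P_{k,J}$ as a \emph{formal} series; the assertion that these maps ``extend continuously from polynomials to convergent power series'' is precisely what has to be proved, and it requires bounds on the degree-$k$ projections that are uniform (at most geometric) in $k$, which you do not supply. The gap is fixable in two ways: (a) in linear coordinates on $L_J^\perp$ adapted to the restricted forms $\bar v_m$, $m\notin J$, the projection amounts to restricting $g_k$, keeping the monomials divisible by $\prod_{m\notin J}\bar v_m$ and dividing, so the coefficients of $P_{k,J}$ are dominated, up to a factor $C^k$, by those of $g_k$, giving convergence on a smaller polydisc; or (b), cleaner and closer to how the paper actually secures holomorphy, apply the decomposition directly to the holomorphic numerator $g$ rather than term by term, defining the coefficient of $1/\prod_{j\in J}\langle\xi,v_j\rangle$ by restriction and iterated difference quotients as in Example \ref{ex:renormalization} and Proposition \ref{th:structure-simple-poles}; this is the renormalization $R_Q$, it makes $M_\f$ manifestly holomorphic, and it is consistent with the remark following the lemma, where $M_\f$ is identified with $\mu_Q(\t(\c,\f))$.
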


\begin{remark}
Given our Euclidean scalar product, the decomposition of $S(\c)(\xi)$ in Lemma \ref{lem:musimplicial} is unique.
 Thus the holomorphic term $M_\f$ is
the $\mu$-function of the transverse cone $\t(\c,\f)$ to the face $\f$ defined in
\cite{MR2343137}.
In particular, the term corresponding to $\f=\{0\}$ is the $\mu$-function of the cone $\c$.
In the appendix (Theorem \ref{th:localEML}), we establish this formula directly for any cone, while in
\cite{MR2343137}, we showed a valuation property  for the $\mu$ function allowing us to deduce the case of a general cone from the case of a simplicial one.
\end{remark}

Write $M_\f =\sum M_{m,\f}$ as a sum of its homogeneous terms.
We obtain the following corollary.
\begin{lemma}\label{lem:asympsimplicial}
Consider an Euclidean scalar product on $V$. If $\c$ is a simplicial cone,
 there exists unique constant coefficients differential operators $M_{m,\f}(\frac{\partial}{\partial x})$,
 homogeneous of degree $m$,  involving only   derivatives normal to the face $\f$ such that
 $$
\frac{1}{t^\ell}\sum_{x\in \c \cap \lattice} h(\frac{x}{t})\sim
\sum_{k=0}^\infty  \frac{1}{t^k}
\sum_{\stackrel{m\geq 0, \f\in \CF(\c)}{ m+\ell-\dim\f=k} }\int_\f M_{m,\f}(\frac{\partial}{\partial x})\cdot h(x)\, dm_\f(x).
$$
\end{lemma}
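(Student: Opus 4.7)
The plan is to feed the face-by-face decomposition of Lemma \ref{lem:musimplicial} into the Fourier-side asymptotic expansion of Theorem \ref{th:asymptotic-cone}, and then read off the space-side formula by inverse Fourier transform.

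By Theorem \ref{th:asymptotic-cone}, the coefficient $F_k$ in the expansion $\langle R_t(\c),h\rangle \sim \sum_k t^{-k}\langle F_k,h\rangle$ is characterized by $\CF(F_k) = \lim_\lambda S(\c)_{[k-\ell]}(-i\xi)$. I would apply Lemma \ref{lem:musimplicial} to write $S(\c)(\xi) = \sum_\f M_\f(\xi)\,I(\f)(\xi)$ and expand each holomorphic factor into homogeneous components $M_\f = \sum_{m\ge 0} M_{m,\f}$. Since $I(\f)(\xi)$ is homogeneous of degree $-\dim\f$, collecting terms of total degree $k-\ell$ yields
$$
S(\c)_{[k-\ell]}(\xi) \;=\; \sum_{\substack{\f,\;m\ge 0 \\ m+\ell-\dim\f = k}} M_{m,\f}(\xi)\,I(\f)(\xi).
$$

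Next I would recognize each term after boundary value as the Fourier transform of an explicit distribution. Proposition \ref{prop:fourier-transform}(i), applied to the face $\f$ seen as a pointed cone in its own affine span, shows that $\lim_\lambda I(\f)(-i\xi)$ is the Fourier transform of the distribution $h\mapsto \int_\f h\,dm_\f$; because $-\lambda$ sits in the dual cone of $\c$, it lies in the dual of every face simultaneously, so a single boundary-value prescription serves uniformly. Since $M_{m,\f}$ is a polynomial, multiplication by $M_{m,\f}(-i\xi)$ on the Fourier side corresponds, via the standard integration by parts, to applying the constant-coefficient differential operator $M_{m,\f}(\partial/\partial x)$ inside the integration on $\f$. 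Inverting the Fourier transform term by term yields the claimed formula with the indexing condition $m+\ell-\dim\f=k$.

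The fact that $M_{m,\f}(\partial/\partial x)$ involves only derivatives normal to $\f$ is immediate from Lemma \ref{lem:musimplicial}: the assertion that $M_\f(\xi)$ depends only on the projection of $\xi$ onto $\f^{\perp}$ forces every homogeneous piece $M_{m,\f}$ to lie in $\Sym(\f^{\perp})$, so its symbol uses only variables dual to the normal directions. Uniqueness of the $M_{m,\f}$ was already established in the algebraic discussion preceding Lemma \ref{lem:musimplicial}. The only point requiring genuine care is the passage from a meromorphic identity on $V^*_\C$ to an identity of tempered distributions on $V^*$; this turns out to be routine because $S(\c) = \sum_\f M_\f\,I(\f)$ holds as an honest equality of meromorphic functions with hyperplane singularities, so taking a common boundary value in the interior of the dual cone of $\c$ propagates it term by term to the required distributional identity.
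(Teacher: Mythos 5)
Your proposal is correct and follows essentially the same route as the paper: the lemma is obtained there precisely by inserting the decomposition $S(\c)=\sum_\f M_\f\, I(\f)$ of Lemma \ref{lem:musimplicial} into the Fourier-side expansion of Theorem \ref{th:asymptotic-cone}, collecting homogeneous components (using that $I(\f)$ is homogeneous of degree $-\dim\f$), and inverting via Proposition \ref{prop:fourier-transform}(i), with uniqueness coming from the uniqueness of the algebraic decomposition. Nothing essential is missing.
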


In the next section, we will show that such an asymptotic formula is valid for any affine cone,  with differential operators $M_{m,\f}(\frac{\partial}{\partial x})$ expressed in terms of the $\mu$-function \cite{MR2343137} of the transverse cone to the face $\f$.

\section{Local Euler-Maclaurin asymptotic expansion for Riemann sums over a  lattice polyhedron}\label{section:lattice-polyhedron}

\subsection{$\mu$ function of a pointed cone. }\label{section:mu}

We fix a Euclidean scalar product $Q$ on $V$. Thus $V^*$ inherits also a scalar product.
In \cite{MR2343137},  given  the scalar product $Q$,  we defined an analytic function $\mu_Q(\a)(\xi)$ on $V^*$ for any semi-rational affine cone $\a$ in a rational quotient space $V/L$ of $V$. The function
$\mu_Q(\a)(\xi)$  can also be defined by a renormalization procedure which is more natural in some respects than our inductive definition  in \cite{MR2343137}.
 Postponing this new definition (Definition \ref{def:mu-by-renormalization}) to the appendix, we note some of its properties.
In this section, we need only the case of a cone with a lattice vertex, the results for a semi-rational affine cone will be recalled in Section \ref{section:Dilatation by a real parameter}. We will often drop the subscript $Q$.

\begin{proposition}\label{prop:mu}
 \begin{enumerate}
 \item If $s$ is a lattice point, then $\mu(s+\c)=\mu(\c).$
 \item If $\a$ is an affine  cone in a quotient space $V/L$, then $\mu(\a)(\xi)$ depends only on the orthogonal projection of $\xi$ on the subspace $L^\perp\subset V^*$.
 \end{enumerate}
\end{proposition}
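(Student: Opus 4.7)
My plan is to prove (2) first, then use it to deduce (1) through a uniqueness argument for the generating-function decomposition.

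For (2), the function $\mu(\a)$ of an affine cone $\a$ in a quotient $V/L$ is by construction an analytic function on $(V/L)^*$. The scalar product $Q$ identifies $(V/L)^*$ with the annihilator $L^\perp \subset V^*$ and induces an orthogonal projection $\pi_{L^\perp}\colon V^* \to L^\perp$. The canonical extension of $\mu(\a)$ from $L^\perp$ to all of $V^*$ is the pullback along $\pi_{L^\perp}$, which is exactly the statement that $\mu(\a)(\xi)$ depends only on the orthogonal projection of $\xi$ onto $L^\perp$. The renormalization construction in the appendix produces this extension automatically, since it operates entirely inside $(V/L)^*$.

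With (2) in hand, the $\mu$-function is characterized by the unique decomposition
$$
S(\a)(\xi) = \sum_\f \mu(\t(\a,\f))(\xi)\, I(\f)(\xi)
$$
(Lemma \ref{lem:musimplicial} for simplicial cones, Theorem \ref{th:localEML} in general), where each coefficient depends only on the orthogonal projection of $\xi$ onto $\lin(\f)^\perp$. For (1), I exploit the identities $S(s+\c)(\xi) = \e^{\langle\xi,s\rangle} S(\c)(\xi)$ (using $s \in \lattice$) and $I(s+\f)(\xi) = \e^{\langle\xi,s\rangle} I(\f)(\xi)$ (unconditional). Multiplying the decomposition of $S(\c)$ by $\e^{\langle\xi,s\rangle}$ yields a valid decomposition of $S(s+\c)$ indexed by the faces $s+\f$ of $s+\c$, with coefficients $\mu(\t(\c,\f))$. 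Since $\lin(s+\f) = \lin(\f)$, the constraint from (2) is preserved, and uniqueness of the decomposition forces $\mu(\t(s+\c, s+\f)) = \mu(\t(\c,\f))$ for every face $\f$. Specializing to $\f = \{0\}$, where $\t(\c,\{0\}) = \c$ and $\t(s+\c,\{s\}) = s+\c$, yields $\mu(s+\c)=\mu(\c)$.

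The principal obstacle is invoking the uniqueness of the decomposition, which relies crucially on the constraint from (2) applied face by face. This is the content of Theorem \ref{th:localEML}, proved independently in the appendix via the renormalization formalism, so no circularity arises with the proof of (1). As a byproduct, the argument actually gives the stronger fact that $\mu$ is invariant under translation of the vertex by any element of the image of $\lattice$ in $V/\lin(\f)$ for any face.
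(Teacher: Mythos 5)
Your proof of (2) coincides with the paper's: by Definition \ref{def:mu-by-renormalization}, part 2), the $\mu$-function of a cone in $W=V/L$ is constructed inside $W^*=L^\perp$ and extended to $V^*$ so as to depend only on the $Q$-orthogonal projection onto $L^\perp$, so the statement holds by construction, exactly as you say. For (1), however, you take a genuinely different and much heavier route than the paper. The paper's argument is one line from the renormalization definition: for $s\in\lattice$ one has $S(s+\c)(\xi)=\e^{\langle\xi,s\rangle}S(\c)(\xi)$, hence $\e^{-\langle\xi,s\rangle}S(s+\c)=S(\c)$ and $\mu(s+\c)=R_Q\bigl(\e^{-\langle\xi,s\rangle}S(s+\c)\bigr)=R_Q(S(\c))=\mu(\c)$; this is precisely how Proposition \ref{prop:mu-semi-rational-cone}(i) is proved. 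Your detour through the decomposition $S(\a)=\sum_\f \mu(\t(\a,\f))I(\f)$ plus uniqueness is workable and has the merit of delivering all the face-by-face identities $\mu(\t(s+\c,s+\f))=\mu(\t(\c,\f))$ at once (which the paper instead deduces from (1)), but two caveats are needed. First, in the body of the paper the lattice-vertex case of the decomposition is itself derived from (1), so to avoid circularity you must, as you indicate, rely only on the appendix Theorem \ref{th:localEML}, which is stated for rational affine cones and proved without invoking (1). Second, the uniqueness you invoke is stated in the paper only for cones with vertex $0$ (after Theorem \ref{th:local-EML-cone}), and it is not an immediate consequence of Theorem \ref{th:structure} for an affine vertex, because the face terms $\mu(\t(\a,\f))I(\f)$ of an affine cone do not lie in separate summands of that direct sum (the factor $\e^{\langle\xi,s\rangle}$ spreads them); you need the extra observation that $I(s+\f)=\e^{\langle\xi,s\rangle}I(\f)$, so that the difference of your two decompositions of $S(s+\c)$ can be divided by the nonvanishing entire function $\e^{\langle\xi,s\rangle}$, after which the vertex-$0$ uniqueness applies. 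With that line added your argument is correct; the definitional one-liner is nevertheless both shorter and the proof the paper intends.
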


\begin{example}\label{ex:mu-dim1-1}
In dimension one, $V=\R$ and $\lattice=\Z$, for the cone $\R_{\geq 0}$, we have
\begin{equation}\label{eq:mu-dim1}
\mu(\R_{\geq 0})(\xi)=\frac{1}{1-\e^{\xi}}+\frac{1}{\xi}=- \sum_{m\geq 0}\frac{b_{m+1}}{(m+1)!}\xi^m.
\end{equation}
\end{example}

Recall the  definition of the {\em transverse cone} $\t(\c,\f)$ of $\c$ along a face $\f$ (Definition \ref{def:transverse-cone}).
The following theorem  is proven in  \cite{MR2343137} (a new proof is given in the appendix).
It generalizes Lemma \ref{lem:musimplicial} to any cone, not necessarily simplicial.

\begin{theorem}\label{th:local-EML-cone}
Let $\c\subset V$ be a  pointed cone. Fix a scalar product $Q$ on $V$.
Then
 \begin{equation}\label{eq:local-EML-cone-1}
    S(\c)= \sum_\f \mu_Q(\t(\c,\f)) I(\f),
\end{equation}
where the sum runs over the set of faces of $\c$.
\end{theorem}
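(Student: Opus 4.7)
The plan is to reduce to the simplicial case (Lemma \ref{lem:musimplicial}) by subdividing $\c$ and exploiting the valuation property of $S$, $I$, and (as must be shown) $\mu_Q$. By Minkowski's theorem (Section \ref{subsection:cone-decomposition}) one can subdivide $\c$ into simplicial pointed cones $\c_\alpha$, yielding an indicator identity $[\c]=\sum_\alpha n_\alpha [\c_\alpha]$ with $n_\alpha\in\Z$. Since $S$ and $I$ are valuations, this descends to $S(\c)=\sum_\alpha n_\alpha S(\c_\alpha)$, and applying Lemma \ref{lem:musimplicial} to each $\c_\alpha$ gives
\begin{equation*}
S(\c)=\sum_\alpha n_\alpha\sum_{\g\text{ face of }\c_\alpha}\mu_Q(\t(\c_\alpha,\g))\,I(\g).
\end{equation*}

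Next I would regroup the right-hand side by faces $\f$ of $\c$, collecting all pairs $(\alpha,\g)$ where $\g$ is a face of $\c_\alpha$ with $\lin(\g)=\lin(\f)$. Using that $I$ is a valuation on the face lattice, this rearrangement packages the sum into a candidate decomposition $S(\c)=\sum_\f M_\f(\xi)\,I(\f)(\xi)$, in which each $M_\f$ is holomorphic near $\xi=0$ and depends only on the orthogonal projection of $\xi$ onto $\lin(\f)^\perp$.

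To identify $M_\f$ with $\mu_Q(\t(\c,\f))$, I would establish a uniqueness principle: given any meromorphic $F(\xi)$ on $V^*_\C$ with simple hyperplane singularities along the edges of $\c$, there is at most one expansion $F=\sum_\f M_\f\cdot I(\f)$ of the above type. The uniqueness follows by induction on the codimension of $\f$ via iterated partial residues along the edge hyperplanes, isolating one term at a time starting from the top-dimensional term $M_\c$ (the holomorphic factor after clearing all edge denominators). Combined with the renormalization characterization of $\mu_Q$ (Definition \ref{def:mu-by-renormalization}) applied to the transverse cone $\t(\c,\f)$, this uniqueness forces $M_\f=\mu_Q(\t(\c,\f))$.

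The main obstacle is the regrouping step: verifying that the rearranged coefficient $M_\f$ genuinely depends only on the $\lin(\f)^\perp$ projection of $\xi$ and coincides with $\mu_Q$ of the transverse cone. This amounts to a valuation property for the $\mu$-function itself, namely that the transverse cones $\t(\c_\alpha,\g)$ (as $\g$ ranges over faces of $\c_\alpha$ with $\lin(\g)=\lin(\f)$) subdivide $\t(\c,\f)$, and that $\mu_Q$ respects this subdivision. This is most cleanly obtained from the renormalization construction in the appendix, since renormalization is by its very nature compatible with linear relations among rational functions with hyperplane singularities, and therefore automatically descends from the simplicial building blocks to the general cone $\c$.
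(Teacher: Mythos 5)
Your strategy---subdivide $\c$ into simplicial cones, apply Lemma \ref{lem:musimplicial} to each piece, regroup by faces of $\c$, and then invoke a uniqueness principle---is essentially the route of \cite{MR2343137}, not the proof given in the appendix of this paper: there, the decomposition of Theorem \ref{th:structure} is applied directly to $S(\c)$ (after noting only simple poles occur), and each summand $g_L$ is identified by induction on dimension using the residue formulas $\Res_{v}S(\a)=-S(\proj_{V/v}\a)$ and $\Res_{v}I(\a)=-I(\proj_{V/v}\a)$ of Lemma \ref{lemma:residue}; subdivisions and any valuation property of $\mu_Q$ are bypassed altogether.

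As written, your argument has a genuine gap exactly at the step you flag as the main obstacle, and the appeal to renormalization does not close it. After subdividing, the faces $\d$ of the $\c_\alpha$ are of two kinds: those contained in a face $\f$ of $\c$ with $\lin(\d)=\lin(\f)$, and interior walls whose span is not the span of any face of $\c$. To regroup you need (i) the total contribution of the interior walls to cancel, and (ii) for each $\d$ subdividing $\f$, the identity $\sum_{\alpha:\,\d\subseteq\c_\alpha} n_\alpha\,\mu_Q(\t(\c_\alpha,\d))=\mu_Q(\t(\c,\f))$, so that this common coefficient can be factored against $\sum_{\d\subseteq\f} I(\d)=I(\f)$. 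Both require localizing the indicator relation $[\c]=\sum_\alpha n_\alpha[\c_\alpha]$ to supporting cones along $\d$, projecting it to an indicator relation among transverse cones in $V/\lin(\d)$, and then a valuation property of $\mu_Q$ in that quotient together with the vanishing of the $S$-valuation (hence of $\mu_Q$) on cones containing lines, which is what kills the interior walls. None of this is ``automatic'' from linearity of $R_Q$: Proposition \ref{prop:properties-of-mu} gives the valuation property only for cones in $V$ with a fixed vertex, and the transverse-cone indicator identities are geometric statements you would still have to prove. Likewise, your uniqueness principle (which is fine, and is how uniqueness is handled around Theorem \ref{th:main}) only shows that a decomposition $S(\c)=\sum_\f M_\f\, I(\f)$ of the stated type is unique; it does not identify $M_\f$ with $\mu_Q(\t(\c,\f))=R_Q\bigl(S(\t(\c,\f))\bigr)$ computed in the quotient $V/\lin(\f)$ unless you establish the missing link between the coefficients of $S(\c)$ and the generating functions of the transverse cones---which is precisely what the paper's residue induction supplies.
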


More concretely, for each face $\f$,  write $V=L\oplus L^{\perp}$ where $L$ is the linear span of $\f$ and $L^{\perp}\subset V$
the orthogonal complement of $L$ in $V$ with respect to our scalar product.
Write $\xi\in V^*$ as $\xi=\xi_0+ \xi_1$ with
$\xi_0\in L^*$ and $\xi_1\in (L^{\perp})^*$,
then the term $\mu_Q(\t(\c,\f))$  is a holomorphic function of $\xi_1$,
 while $I(\f)$ is a rational function of $\xi_0$.
Equation (\ref{eq:local-EML-cone-1}) determines uniquely the functions
$\mu_Q(\t(\c,\f))(\xi_1)$.

For the case of a simplicial cone, we have seen that is was quite immediate to obtain such a decomposition.

\subsection{Reduction to pointed polyhedra}
As we continue, we will need to work with the supporting cones of a polyhedron, which are non pointed affine cones.  Actually, we may consider more generally a non pointed polyhedron $\p$ as well.
To begin with, the case  $\p=\R$. We write the Euler-Maclaurin formula for a Riemann sum over the whole line. For any $n>0$,
\begin{equation}\label{eq:EML-whole-line}
    \frac{1}{t}\sum_{x\in \Z}h(\frac{x}{t})= \int_{-\infty}^\infty h(x) - \frac{1}{t^n}\int_{-\infty}^\infty  \frac{B_n(\{ -tx\})}{n!}h^{(n)}(x).
\end{equation}

Since the polyhedron  $\p$ is rational,    its lineality space  $L$ is a rational subspace, meaning that $L\cap \lattice$ is a lattice $\lattice_L$. Let $\pi_{V/L}$ be the projection map $V\mapsto V/L$. Then the projected polyhedron $\pi_{V/L}(\p)$ is   pointed  in the  quotient space $V/L$; it is rational with respect to  the projected lattice $\lattice_{V/L}=\pi_{V/L}(\lattice)$.
\begin{lemma}\label{lemma:averaging}
  Let $\p\subset V$ be a polyhedron of dimension $\ell$ with lineality space $L$. If $h(x)$ is a test function on $V$, let $(\pi_{V/L})_*h (y)$ be the  function on $V/L$ obtained by averaging $h$ (with respect to the Lebesgue measure corresponding to the lattice $\lattice_L$),
  $$
  (\pi_{V/L})_*h (y)=\int_L h(y+x)dx.
  $$
  Then
  $$
  \frac{1}{t^\ell}\sum_{x\in t \p \cap \lattice}h(\frac{x}{t})=\frac{1}{t^{\ell-\dim L}}\sum_{y\in  t \pi_{V/L}(\p)\cap \lattice_{V/L}}(\pi_{V/L})_*h (\frac{y}{t})+ O(t^{-\infty}).
  $$
\end{lemma}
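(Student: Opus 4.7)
The plan is to exploit the $L$-invariance of $t\p$ to fiber the sum along $L$-cosets, and then apply the one-dimensional Euler--Maclaurin formula (\ref{eq:EML-whole-line}) iteratively in the $L$-direction to collapse each inner sum to the corresponding $L$-integral, which is by definition $(\pi_{V/L})_*h$.

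Concretely, since $L$ is the lineality space of $\p$, we have $\p+L=\p$, hence also $t\p+L=t\p$. Choose a $\Z$-complement $\lattice'$ of $\lattice_L:=L\cap\lattice$ in $\lattice$, and set $L'=\R\cdot\lattice'$, so that $V=L\oplus L'$ and $\lattice=\lattice_L\oplus\lattice'$; the projection $\pi_{V/L}$ identifies $\lattice'$ with $\lattice_{V/L}$. Writing $\tilde y\in\lattice'$ for the canonical lift of $y\in\lattice_{V/L}$, the $L$-invariance of $t\p$ produces the identification $t\p\cap\lattice=\lattice_L\oplus\{\tilde y:y\in t\pi_{V/L}(\p)\cap\lattice_{V/L}\}$, which rewrites
$$
\frac{1}{t^\ell}\sum_{x\in t\p\cap\lattice}h(x/t)
=\frac{1}{t^\ell}\sum_{y\in t\pi_{V/L}(\p)\cap\lattice_{V/L}}\;\sum_{z\in\lattice_L}h\bigl((\tilde y+z)/t\bigr).
$$

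For fixed $y$, the inner sum is a Riemann sum in $L$ of the function $u\mapsto h(\tilde y/t+u)$ on $\lattice_L$. Choose a $\Z$-basis of $\lattice_L$ and apply (\ref{eq:EML-whole-line}) iteratively along the $k:=\dim L$ coordinates: at each step the remainder in (\ref{eq:EML-whole-line}) is the $L^1$-norm of some partial derivative of $h$ against a bounded function, times $t^{-n}$. Since $h\in C_c^\infty(V)$, all these partial derivatives are uniformly bounded and have compact support whose size is independent of $\tilde y$. Hence for every $n\geq 1$,
$$
\frac{1}{t^k}\sum_{z\in\lattice_L}h\bigl((\tilde y+z)/t\bigr)
=\int_L h(\tilde y/t+u)\,du+O(t^{-n})
=(\pi_{V/L})_*h(y/t)+O(t^{-n}),
$$
with the $O$-constant uniform in $y$.

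Finally, only those $y$ with $y/t\in\pi_{V/L}(\supp h)$ contribute, and this compact subset of $V/L$ contains $O(t^{\ell-k})$ points of the lattice $\lattice_{V/L}$ lying in $t\pi_{V/L}(\p)$ (since $\pi_{V/L}(\p)$ is pointed of dimension $\ell-k$). Summing the uniform inner remainders and dividing by $t^{\ell-k}$ yields a total error of order $O(t^{-n})$ for any $n$, i.e.\ $O(t^{-\infty})$, which proves the lemma. The only technical point is the uniformity in $y$ of the Euler--Maclaurin remainder, and this is precisely what the compact support of $h$ delivers, as all bounds in (\ref{eq:EML-whole-line}) depend solely on $L^1$-norms of derivatives of $h$.
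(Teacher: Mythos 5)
Your proof is correct and follows essentially the same route as the paper: split $V=L\oplus L'$ along a lattice complement so the sum factors as a sum over the projected pointed polyhedron times a sum over $\lattice_L$, and collapse the latter to the $L$-integral using the whole-line Euler--Maclaurin formula (\ref{eq:EML-whole-line}). The only difference is that you spell out the uniformity of the remainder and the $O(t^{\ell-\dim L})$ lattice-point count, details the paper's terser proof leaves implicit.
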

\begin{proof}
   If $\p=L$, then
   $\frac{1}{t^{\ell}}\sum_{x\in  \lattice}h(\frac{x}{t})=\int_{L } h(x) dx +O(t^{-\infty})$.  This follows immediately from (\ref{eq:EML-whole-line}).

In the general case, we obtain a product situation by choosing a complementary rational subspace $ L' $   to $L$,  $V=L\oplus L'$.
We identify $V/L$ and $L'$.
Let $\lattice'=(\Lambda+L)\cap L'$ be the projected lattice  on $L'$. Let $\p'\subset L'$ be the projected polyhedron. Then $\p'$ is a pointed polyhedron. We have $\p=L\oplus \p'$ and $\p\cap \lattice= L\cap \lattice \oplus \p'\cap \lattice'$. So the lemma follows from the case when $\p$ is  a vector space.
\end{proof}

\subsection{Local Euler-Maclaurin asymptotic expansion  for a cone}
 Let us state the following asymptotic expansion which we call the \emph{local Euler-Maclaurin asymptotic expansion} of the weighted sum on a cone. It depends on the choice of a scalar product on the ambient space.
 \begin{theorem}\label{cor:localEMLasymptotic-cone}
 Let $\c\subseteq V $ be a   rational   cone of dimension $\ell$. Let $\CF(\c)$ be the set of faces of $\c$.   Let $h(x)$ be a test function on $V $. Then the following asymptotic expansion holds for $t\to \infty$, $t$ real.
\begin{equation}\label{eq:EMLasymptotic-cone}
\frac{1}{t^\ell}\sum_{x\in \c \cap \lattice} h(\frac{x}{t})\sim
\sum_{k=0}^\infty  \frac{1}{t^k}
\sum_{\stackrel{m\geq 0, \f\in \CF(\c)}{ m+\ell-\dim\f=k} }\int_\f \mu(\t(\c,\f))_{[m]}(\frac{\partial}{\partial x})\cdot h(x)\, dm_\f(x).
\end{equation}

Here the constant coefficient differential operator
$\mu(\t(\c,\f))_{[m]}(\frac{\partial}{\partial x})$ involves only derivatives normal to the face $\f$.

\end{theorem}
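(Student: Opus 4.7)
The plan is to combine three ingredients already established: the asymptotic expansion of the Fourier transform of $R_t(\c)$ (Theorem \ref{th:asymptotic-cone}), the decomposition $S(\c)=\sum_\f \mu(\t(\c,\f))\,I(\f)$ (Theorem \ref{th:local-EML-cone}), and the averaging Lemma \ref{lemma:averaging} to reduce to the pointed case. The normal-derivative property will fall out of Proposition \ref{prop:mu}(2).

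Assume first that $\c$ is pointed. I would substitute the decomposition of Theorem \ref{th:local-EML-cone} into the expansion of $S(\c)$ in homogeneous components. Since $I(\f)$ is a rational function homogeneous of degree $-\dim\f$ and $\mu(\t(\c,\f))=\sum_{m\geq 0}\mu(\t(\c,\f))_{[m]}$ with each $\mu_{[m]}$ a homogeneous polynomial of degree $m$, identifying homogeneous components gives
$$S(\c)_{[k-\ell]}(\xi)=\sum_{\f,\,m\geq 0:\,m+\ell-\dim\f=k}\mu(\t(\c,\f))_{[m]}(\xi)\,I(\f)(\xi).$$
Substituting this into Theorem \ref{th:asymptotic-cone} and passing to boundary values, I take the inverse Fourier transform term by term. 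By Proposition \ref{prop:fourier-transform}(i), $\lim_\lambda I(\f)(-i\xi)$ is the Fourier transform of the distribution $h\mapsto\int_\f h\,dm_\f$. Multiplication on the Fourier side by the polynomial $\mu_{[m]}(-i\xi)$ corresponds exactly to applying the constant-coefficient differential operator $\mu_{[m]}(\partial/\partial x)$ to $h$ before integrating: this is a direct computation using integration by parts, in which the signs from transposing the operator and the factors of $i$ from $\widehat{\partial T}=i\xi\,\widehat T$ combine coherently because $\mu_{[m]}$ is homogeneous of degree $m$. This yields the desired expansion when $\c$ is pointed.

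The normal-derivative statement is then immediate from Proposition \ref{prop:mu}(2): $\mu(\t(\c,\f))(\xi)$ depends only on the component of $\xi$ in $\lin(\f)^{\perp}$, so $\mu_{[m]}(\partial/\partial x)$ only uses derivatives in directions normal to $\f$.

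Finally, to treat an arbitrary rational cone $\c$ with lineality space $L$, I would apply Lemma \ref{lemma:averaging} to reduce to the pointed cone $\bar{\c}\subset V/L$. Every face $\f$ of $\c$ contains $L$, so the faces of $\c$ correspond bijectively to faces $\bar{\f}=\f/L$ of $\bar{\c}$, with $\t(\c,\f)=\t(\bar{\c},\bar{\f})$ and $\dim\f-\dim\bar{\f}=\dim L$, so the codimension $\ell-\dim\f=\dim\bar{\c}-\dim\bar{\f}$ is preserved. Because $\lin(\f)^{\perp}\subseteq L^{\perp}$, the operator $\mu_{[m]}(\partial/\partial x)$ differentiates only in directions transverse to $L$ and hence commutes with the partial integration $(\pi_{V/L})_*$; Fubini then converts each $\int_{\bar{\f}}\mu_{[m]}(\partial)(\pi_{V/L})_*h\,dm_{\bar{\f}}$ into $\int_\f \mu_{[m]}(\partial)h\,dm_\f$, and the pointed-case formula pulls back to the required expansion. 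The $O(t^{-\infty})$ error in Lemma \ref{lemma:averaging} is absorbed into the asymptotic expansion. The principal technical obstacle is not conceptual but organizational: tracking degrees of homogeneity, the sign conventions for Fourier transforms of differential operators, and the correspondence between faces of $\c$ and faces of $\bar{\c}$. All the essential content resides in the three results cited above.
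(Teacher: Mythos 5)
Your proposal is correct and follows essentially the same route as the paper: pointed case by combining Theorem \ref{th:asymptotic-cone} with the decomposition of Theorem \ref{th:local-EML-cone}, matching homogeneous components and applying Proposition \ref{prop:fourier-transform}(i), normal derivatives from Proposition \ref{prop:mu}, and the non-pointed case via Lemma \ref{lemma:averaging} with the identification of faces and transverse cones and the commutation of the constant-coefficient operators with $(\pi_{V/L})_*$. No gaps worth noting.
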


\begin{remark}
    There is just one term for $k=0$, which is of course $\int_\c h(x) dx$. Indeed, for $k=0$, since $m\geq 0$, if  $m+\ell=\dim\f$, then $m=0$ and $\f=\c$. Moreover, for the face $\f=\c$ itself, we have $\t(\c,\c)=\{0\}$ and $\mu(\{0\})=1$, so this face occurs only for $k=0$.
\end{remark}
\begin{proof}
First, assume that $\c$ is pointed. In that case,  we   take the inverse Fourier transform of (\ref{eq:EMLasymptotic-anycone}),   collect the homogeneous components in Formula (\ref{eq:local-EML-cone-1}), noting that $I(\f)$ is homogeneous of degree $(-\dim\f)$, and apply Proposition \ref{prop:fourier-transform} (i). Thus we obtain (\ref{eq:EMLasymptotic-cone}) when $\c$ is pointed.

If $\c$ is not pointed, let $L$ be its lineality subspace and let $\pi= \pi_{V/L}$ be the projection map. Then (\ref{eq:EMLasymptotic-cone}) holds for the pointed cone $\pi(\c)$. We write it  for the averaged test function $\pi_* h$. The faces of $\pi(\c)$ are the projections of the faces of $\c$. If $\f$ is a face of $\c$, the transverse cone $\t(\pi(\c),\pi(\f))$ coincides with $\t(\c,\f)$ under the identification $(V/L)/\lin(\pi(\f))= V/\lin(\f)$. Thus,
\begin{multline}\label{eq:EMLasymptotic-cone-2}
 \frac{1}{t^{\ell-\dim L}}\sum_{y\in \pi(\c)\cap \pi(\lattice)}(\pi_*h)(\frac{y}{t})\sim
 \sum_{k=0}^\infty  \frac{1}{t^k}\\
\sum_{\stackrel{\{(\f,m)\}, \, \f\in \CF(\c)}{m\geq 0, \, m+(\ell-\dim L)-(\dim\f-\dim L)=k} }\; \int_{\pi(\f)} \mu(\t(\c,\f))_{[m]}(\frac{\partial}{\partial y})\cdot (\pi_*h)(y)\, dm_{\pi(\f)}(y).
\end{multline}
Since $\mu(\t(\c,\f))_{[m]}(\frac{\partial}{\partial y})$ is a differential operator with constant coefficients, we have
$$
  \mu(\t(\c,\f))_{[m]}(\frac{\partial}{\partial y})\cdot (\pi_* h)=
  \pi_*\left( \mu(\t(\c,\f))_{[m]}(\frac{\partial}{\partial x})\cdot h \right).
$$
So, the $(k,\f,m)$ term in the RHS of (\ref{eq:EMLasymptotic-cone-2}) is equal to
$$
\int_\f \mu(\t(\c,\f))_{[m]}(\frac{\partial}{\partial x})\cdot h(x)\, dm_\f(x).
$$
Using Lemma \ref{lemma:averaging}, we obtain (\ref{eq:EMLasymptotic-cone}) for the cone $\c$.
\end{proof}
\begin{example}
    If $\c=V$, there is just one face $\c$ itself, with transverse cone $\{0\}$, for which $\mu(\{0\})=1$, so all terms of the asymptotic expansion are $0$, except for $k=0$ which gives the integral over $V$.   \end{example}
\begin{example}\label{ex:localEMLasymptotic-cone-1}
 Let us describe (\ref{eq:EMLasymptotic-cone}) in dimension one, $\c=\R_{\geq 0}$. The face  $\f=\{0\}$ occurs in the $k$ term  for $m=k-1$.  We have $\t(\c,\{0\})=\c=\R_{\geq 0}$ and
$\mu(\R_{\geq 0})_{[k-1]}= -\frac{b_k}{k!} \xi^{k-1}$. So (\ref{eq:EMLasymptotic-cone}) is
$$
\frac{1}{t}\sum_{x\in \Z_{\geq 0}} h(\frac{x}{t})\sim \int_0^\infty h(x)  dx - \sum_{k=1}^\infty  \frac{1}{t^k}\frac{b_k}{k!}h^{k-1}(0).
$$
Of course, it is Formula (\ref{eq:EML-dim1-usual}) applied to $\frac{1}{t}h(\frac{x}{t})$.
\end{example}
\begin{example}
 Let us describe (\ref{eq:EMLasymptotic-cone}) for the non pointed cone $\c=\R_{\geq 0}e_1\oplus \R e_2\subset \R^2$, with the standard scalar product on $\R^2$. Besides $\c$ itself, there is only one face, $\f=\R e_2$, with transverse cone $\t(\c,\f)=\R_{\geq 0}e_1$ and
$\mu_Q(\t(\c,\f))_{[k-1]}(\xi)= -\frac{b_k}{k!} \xi_1^{k-1}$. So (\ref{eq:EMLasymptotic-cone}) is
\begin{multline*}
\frac{1}{t^2}\sum_{x_1\in \Z_{\geq 0}, x_2\in \Z} h(\frac{x_1}{t},\frac{x_2}{t}) \sim \\
\int_0^\infty \int_\R h(x_1,x_2)  dx_2 dx_1 -  \sum_{k=1}^\infty  \frac{1}{t^k}\frac{b_k}{k!}\int_\R \frac{\partial^{k-1}h}{\partial x_1}(0,x_2)dx_2.
\end{multline*}
It is Example \ref{ex:localEMLasymptotic-cone-1} applied to the averaged function $g(x_1)=\int_\R h(x_1,x_2)  dx_2$.
\end{example}
\begin{example}\label{ex:pyramide}
Let $V=\R^3$ with standard lattice.
Let $\c\subset \R^3$ be the non simplicial cone based on a square, with generators $v_1=e_3+e_1,v_2=e_3+e_2,v_3=e_3-e_1,v_4=e_3-e_2$. By subdivising $\c$ into unimodular cones, we obtain
$$
S(\c)(\xi)= {\frac {1+{ e^{\xi_{{3}}}}-{e^{2\,\xi_{{3}}}}-{e^{3\,
\xi_{{3}}}}}{ \left( 1-{e^{\xi_{{3}}+\xi_{{1}}}} \right)
 \left( 1-{e^{\xi_{{3}}-\xi_{{1}}}} \right)  \left( 1-{e^{
\xi_{{3}}+\xi_{{2}}}} \right)  \left( 1-{e^{\xi_{{3}}-\xi_{{2}}}
} \right) }}.
$$
We compute the first four homogeneous components of $S(\c)(\xi)$ from the formula  $S(\c)= \sum_\f \mu_Q(\t(\c,\f)) I(\f)$ (where $Q$ is the standard Euclidean product).
We obtain
$$
S(\c)_{[-3]}=I(\c)= -\frac{4\xi_3}{(\xi_3+\xi_1) (\xi_3-\xi_1)(\xi_3+\xi_2)(\xi_3+\xi_2)}.
$$
\begin{multline*}
S(\c)_{[-2]}=
\frac {1}{ \left( {\xi_3}+{\xi_1} \right)  \left( {\xi_3}
+{\xi_2} \right) }+\frac {1}{ \left( {\xi_3}+{\xi_1}
 \right)  \left( {\xi_3}-{\xi_2} \right) } \\
 +\frac {1}{ \left( {\xi_3}-{\xi_1} \right)  \left( {\xi_3}+{\xi_2
} \right) }
+\frac {1}{ \left( {\xi_3}-{\xi_1} \right)
 \left( {\xi_3}-{\xi_2} \right) }.
\end{multline*}
\begin{multline*}
S(\c)_{[-1]}= -\frac{2}{9}\sum_{j=1,2}\frac{1}{{\xi_3}+{\xi_j}}
+\frac{1}{{\xi_3}-{\xi_j}}
-\frac{1}{36}{\frac {{\xi_3}-{\xi_1}-{\xi_2}}{ \left( {\xi_3}+{\xi_1
} \right)  \left( {\xi_3}+{\xi_2} \right) }}\\
-\frac{1}{36}\,{\frac {{\xi_3}-{\xi_1}+{\xi_2}}{ \left( {\xi_3}+{\xi_1} \right)
 \left( {\xi_3}-{\xi_2} \right) }}
 -\frac{1}{36}\,{\frac {{\xi_3}+{
\xi_1}-{\xi_2}}{ \left( {\xi_3}-{\xi_1} \right)  \left( {
\xi_3}+{\xi_2} \right) }}-\frac{1}{36}\,{\frac {{\xi_3}+{\xi_1}+{
\xi_2}}{ \left( {\xi_3}-{\xi_1} \right)  \left( {\xi_3}-{
\xi_2} \right) }}.
\end{multline*}
\begin{multline*}
S(\c)_{[0]}=
\frac{1}{24}\,{\frac {{\xi_3}-{\xi_1}}{{\xi_3}+{\xi_1}}}+\frac{1}{24}\,{
\frac {{\xi_3}-{\xi_2}}{{\xi_3}+{\xi_2}}}
+\frac{1}{24}\,{\frac {{
\xi_3}+{\xi_1}}{{\xi_3}-{\xi_1}}}+\frac{1}{24}\,{\frac {{\xi_3
}+{\xi_2}}{{\xi_3}-{\xi_2}}}+\frac{1}{6}.
\end{multline*}

\medskip

Thus   the coefficients of the asymptotic expansion $$  \langle R_t(\c) ,h\rangle  \sim  \sum_{k=0}^\infty \frac{1}{t^k} \langle F_k,h\rangle$$ are
$$
\langle F_0,h\rangle =\int_\c h,\;\; \langle F_1,h\rangle= \frac{1}{2} \int_{\partial \c} h,
$$
$$
\langle F_2,h\rangle =
\frac{2}{9} \int_{\rm edges} h-\frac{1}{12}\int_{\partial \c} \partial_u h,\langle F_3,h\rangle =\frac{1}{6} h(0,0,0)-\frac{1}{24}\int_{\rm  edges}\partial_v h,
$$
where for each facet, $u$  is the primitive vector (for the projected lattice) normal to the facet pointing inwards,
and  $v=e_3-e_i$ for the edge $e_3+e_i$,
$v=e_3+e_i$ for the edge $e_3-e_i$,$i=1,2$.
\end{example}
\subsection{Local Euler Maclaurin asymptotic expansion for a lattice  polyhedron}
In this section, we will obtain an expansion similar to Theorem \ref{cor:localEMLasymptotic-cone} first for an affine cone with lattice vertex, then for a polyhedron. Thus, in the remainder of this  section, we assume that $\p$  is a  \emph{lattice polyhedron}, meaning that  each of its  faces  contains a lattice point. We also want $t\p$ to be a lattice polyhedron so we will restrict to $t\in \N$.

If $\p$ is a lattice polyhedron, the transverse cone along a face is a pointed affine cone with lattice vertex.  Theorem \ref{th:local-EML-cone} extends readily to such cones.
If $s\in \lattice$ and $\c$ is a pointed cone, for  the shifted cone  $\a=s+\c$,  we have
$$
S(s+\c)(\xi)=\e^{\langle\xi,s\rangle}S(\c)(\xi).
$$
The $\mu$ function of an affine cone $s+\c$, with $s\in \Lambda$, is equal to $\mu(\c)$.

For every face $\f$ of $\c$, we have  $\t(s+\c,s+\f)= \proj_{V/\lin(f)}(s)+ \t(\c,\f),$  therefore
$$
 \mu(\t(s+\c,s+\f))=\mu(\t(\c,\f)).
 $$
Moreover $I(s+\f)(\xi)=\e^{\langle\xi,s\rangle}I(\f)(\xi)$, so Formula (\ref{eq:local-EML-cone-1}) still holds if we replace $\c$ by an  affine cone $\a$ with lattice vertex.

If we dilate $\a$ by a positive integer $t$, the vertex of $t\a$ is still a lattice point. Therefore, we have the following extension of  Theorem \ref{cor:localEMLasymptotic-cone}  to the case of an affine \emph{lattice} cone (such that each of its faces contains a lattice point;  we do not assume that it is pointed), provided the parameter $t$ is an integer.
This theorem was proved by T. Tate (\cite{MR2737411}).
\begin{theorem}\label{cor:affine-lattice-cone}
 Let $\a\subseteq V $ be an    affine lattice cone of dimension $\ell$. Let $\CF(\a)$ be the set of faces of $\a$.   Let $h(x)$ be a test function on $V $. Then the following asymptotic expansion holds when $t\to \infty$ with $t\in \N$.
$$
\frac{1}{t^\ell}\sum_{x\in t\a \cap \lattice} h(\frac{x}{t})\sim
\sum_{k=0}^\infty  \frac{1}{t^k}
\sum_{\stackrel{m\geq 0, \f\in \CF(\a)}{ m+\ell-\dim\f=k} }\int_\f \mu(\t(\a,\f))_{[m]}(\frac{\partial}{\partial x})\cdot h(x)\, dm_\f(x).
$$
\end{theorem}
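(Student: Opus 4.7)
The plan is to reduce the claim to Theorem~\ref{cor:localEMLasymptotic-cone} (the case of a cone with vertex at the origin) via a translation by a lattice vector. Since $\a$ is an affine lattice cone, $\a \cap \lattice$ is nonempty; fix $s \in \a \cap \lattice$ and write $\a = s + \c$, where $\c := \a - s$ is a rational cone with vertex $0$, possibly non-pointed. The faces of $\a$ are the sets $s + \f$ for $\f$ a face of $\c$, with $\dim(s+\f) = \dim\f$.

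The integer assumption $t \in \N$ ensures that $ts \in \lattice$, and since $\c$ is a cone with vertex $0$ we have $t\a = ts + \c$. The substitution $x = y + ts$ identifies $t\a \cap \lattice$ with $\c \cap \lattice$, so
\begin{equation*}
\frac{1}{t^\ell}\sum_{x \in t\a \cap \lattice} h\!\left(\tfrac{x}{t}\right) = \frac{1}{t^\ell}\sum_{y \in \c \cap \lattice} h\!\left(\tfrac{y}{t} + s\right) = \langle R_t(\c), h_s\rangle,
\end{equation*}
where $h_s(z) := h(z + s)$ is again a smooth compactly supported test function on $V$.

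Applying Theorem~\ref{cor:localEMLasymptotic-cone} to $\c$ and $h_s$ gives the asymptotic expansion
\begin{equation*}
\langle R_t(\c), h_s\rangle \sim \sum_{k=0}^\infty \frac{1}{t^k} \sum_{\substack{m\geq 0,\, \f\in\CF(\c)\\ m+\ell-\dim\f=k}} \int_\f \mu(\t(\c,\f))_{[m]}\!\left(\tfrac{\partial}{\partial z}\right)\cdot h_s(z)\, dm_\f(z).
\end{equation*}
Since $\mu(\t(\c,\f))_{[m]}(\partial/\partial z)$ has constant coefficients, we have the identity $[\mu_{[m]}(\partial/\partial z) h_s](z) = [\mu_{[m]}(\partial/\partial x) h](z+s)$. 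Changing variable $x = z+s$ sends $\f$ to $\f' := s+\f$, and the Lebesgue measure transports cleanly because $\lin(\f') = \lin(\f)$, so the intersection lattice is the same. Finally, the translation invariances of $\mu$ recalled just before the theorem statement give $\mu(\t(\a, s+\f)) = \mu(\t(\c,\f))$. Re-indexing the sum by $\f' \in \CF(\a)$ yields exactly the announced formula.

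The argument is essentially mechanical once Theorem~\ref{cor:localEMLasymptotic-cone} is in hand; no new analytic input is required. The one point that deserves emphasis, rather than being a true obstacle, is the role of the restriction $t \in \N$: it is precisely this that makes $ts \in \lattice$ and keeps the vertex of $t\a$ at a lattice point after dilation, so that the local formula of Theorem~\ref{th:local-EML-cone} (with shift-invariant $\mu$) transfers unchanged. Relaxing this to real $t$ breaks lattice invariance of the shift and is exactly what forces the step-polynomial refinement developed in the subsequent sections.
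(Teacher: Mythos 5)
Your proposal is correct and is essentially the paper's own proof: write $\a=s+\c$ with $s\in\lattice$, apply Theorem \ref{cor:localEMLasymptotic-cone} to $\c$ and the shifted test function $h(s+\cdot)$, and use the lattice-translation invariance of $\mu$ and of the transverse cones. One point needs a small repair: you take $s$ to be an \emph{arbitrary} point of $\a\cap\lattice$ and assert that $\c:=\a-s$ is a cone with vertex $0$, which is false in general (e.g.\ $\a=\R_{\geq 0}$, $s=1$ gives $\a-s=[-1,\infty)$, and then $t\a\neq ts+\c$); you must choose $s$ as a lattice point of the \emph{minimal} face of $\a$, which exists precisely because $\a$ is a lattice cone (every face, in particular the minimal one, contains a lattice point), and then $\a-s$ is indeed a cone with vertex $0$ and the rest of your argument goes through unchanged.
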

\begin{proof}
 We can write $\a=s+\c$ where $\c$ is a cone and $s\in \lattice$.  We apply Theorem  \ref{cor:localEMLasymptotic-cone} to the cone $\c$ and the shifted function $h(s+x)$.
\end{proof}
We will now show that this theorem leads to an asymptotic expansion for any lattice polyhedron. This theorem was obtained by
Tetsuya Tate \cite{MR2737411} (for a lattice polyhedron).
\begin{theorem}\label{th:main}
     Let $\p \subset V$ be a lattice polyhedron of dimension $\ell$. Let $h(x)$ be a  test function  on $V$. Fix a Euclidean scalar product $Q$ on $V$.  For a face $\f$ of $\p$, let $\t(\p,\f)$ be the transverse cone of $\p$ along $\f$.      Then the following asymptotic expansion  holds when  $t\to +\infty$, $t\in \N$,
\begin{multline}\label{eq:asymptotic}
\frac{1}{t^\ell}\sum_{x\in t\p\cap \lattice}h(\frac{x}{t})\sim \\
\int_\p h(x)dx +\sum_{k\geq 1}\frac{1}{t^k}\sum_{\stackrel{m\geq 0, \f\in \CF(\p)}{ m+\ell-\dim\f=k} }
\int_\f (\mu(\t(\p,\f)_{[m]}(\frac{\partial}{\partial x}))\cdot h)(x)dm_\f(x).
\end{multline}

Moreover, the differential operators which appear in (\ref{eq:asymptotic}) are unique in the sense that, if $\rho_{[m]}(\p,\f)(\xi)$ is a family of polynomials on $V^*$, such that (\ref{eq:asymptotic})  holds for any test function $h(x)$ (with  $\rho_{[m]}(\p,\f)$ in place of $\mu(\t(\p,\f)_{[m]}$) and such that $\rho_{[m]}(\p,\f)(\xi)$  is homogeneous of degree $m$,  and depends only on the $Q$-projection of $\xi$ on $(\lin(\f))^\perp$, then
$$
\rho_{[m]}(\p,\f)(\xi)= \mu_{[m]}(\t(\p,\f))(\xi).
$$
\end{theorem}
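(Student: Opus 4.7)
The plan is to reduce the polyhedron case to the affine-cone case of Theorem \ref{cor:affine-lattice-cone} by a partition-of-unity argument. For each face $\f$ of $\p$, I choose an open neighborhood $U_\f\subset V$ of the relative interior of $\f$ such that (i) $U_\f\cap\p = U_\f\cap C(\p,\f)$, so $\p$ and its supporting cone at $\f$ agree on $U_\f$, and (ii) for every face $\mathfrak{g}$ of $\p$ with $\f\not\subseteq\mathfrak{g}$, the set $U_\f$ does not meet some neighborhood of $\mathfrak{g}$ in $V$. Such ``stars'' can be constructed inductively from the lowest-dimensional faces outward. Let $\{\phi_\f\}$ be a smooth partition of unity on a tubular neighborhood of $\p$ subordinate to $\{U_\f\}$, and write $h=\sum_\f h_\f$ with $h_\f=h\,\phi_\f$.

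Since $h_\f$ is supported in $U_\f$ where $\p$ and $C(\p,\f)$ coincide, the Riemann sum $\frac{1}{t^\ell}\sum_{x\in t\p\cap\lattice}h_\f(x/t)$ equals $\frac{1}{t^\ell}\sum_{x\in tC(\p,\f)\cap\lattice}h_\f(x/t)$. The supporting cone $C(\p,\f)$ is a (possibly non-pointed) lattice affine cone whose faces are exactly the faces $\mathfrak{g}$ of $\p$ with $\mathfrak{g}\supseteq\f$, and for each such $\mathfrak{g}$ the transverse cones satisfy $\t(C(\p,\f),\mathfrak{g})=\t(\p,\mathfrak{g})$, because the transverse cone depends only on the germ of the ambient set along the relative interior of $\mathfrak{g}$ and $\p$ agrees with $C(\p,\f)$ on such a germ. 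Applying Theorem \ref{cor:affine-lattice-cone} to each $h_\f$, summing over $\f$, and interchanging the order of summation yields
\begin{equation*}
\frac{1}{t^\ell}\sum_{x\in t\p\cap\lattice}h(x/t)\sim\sum_{k\geq 0}\frac{1}{t^k}\sum_{\substack{m\geq 0,\,\mathfrak{g}\\ m+\ell-\dim\mathfrak{g}=k}}\int_\mathfrak{g}\mu(\t(\p,\mathfrak{g}))_{[m]}(\partial_x)\cdot\Bigl(\sum_{\f\subseteq\mathfrak{g}}h_\f\Bigr)\,dm_\mathfrak{g}.
\end{equation*}
By (ii), each $\phi_\f$ with $\f\not\subseteq\mathfrak{g}$ vanishes on a neighborhood of $\mathfrak{g}$, so $\sum_{\f\subseteq\mathfrak{g}}h_\f=h$ near $\mathfrak{g}$; since $\mu(\t(\p,\mathfrak{g}))_{[m]}(\partial_x)$ is a local differential operator, this produces exactly (\ref{eq:asymptotic}).

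For the uniqueness claim, set $\eta_{[m]}(\p,\f):=\rho_{[m]}(\p,\f)-\mu_{[m]}(\t(\p,\f))$. Then for every test function $h$ and every $k$,
\begin{equation*}
\sum_{\substack{m\geq 0,\,\f\\ m+\ell-\dim\f=k}}\int_\f\eta_{[m]}(\p,\f)(\partial_x)\cdot h\,dm_\f = 0.
\end{equation*}
I show $\eta_{[m]}(\p,\f_0)\equiv 0$ by downward induction on $\dim\f_0$: for $h$ supported in a small neighborhood of a point of the relative interior of $\f_0$, only faces $\f\supseteq\f_0$ can contribute, and by the inductive hypothesis the terms with $\f\supsetneq\f_0$ vanish, leaving $\int_{\f_0}\eta_{[m]}(\p,\f_0)(\partial_x)\cdot h\,dm_{\f_0} = 0$ for each $m$. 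Writing $h(x_T+x_N)=h_1(x_T)h_2(x_N)$ with $x_T\in\lin(\f_0)$, $x_N\in\lin(\f_0)^\perp$, and exploiting that $\eta_{[m]}(\p,\f_0)$ involves only derivatives normal to $\f_0$, the integral factors as $\bigl(\int_{\f_0}h_1\,dm_{\f_0}\bigr)\bigl(\eta_{[m]}(\p,\f_0)(\partial_{x_N})\cdot h_2\bigr)(0)=0$; varying $h_2$ (for instance against $e^{i\langle\xi_N,\cdot\rangle}$ times a cutoff) forces $\eta_{[m]}(\p,\f_0)=0$.

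I expect the main obstacle to be the simultaneous setup of the partition of unity with the strong locality condition (ii) and the germ-level identity $\t(C(\p,\f),\mathfrak{g})=\t(\p,\mathfrak{g})$ for $\mathfrak{g}\supseteq\f$. Both are geometrically natural but demand a careful treatment of the face poset and of the coincidence of germs of $\p$ with germs of its supporting cones; once they are in place, the rest reduces to bookkeeping that combines the cone expansion from Theorem \ref{cor:affine-lattice-cone} with the partition of unity.
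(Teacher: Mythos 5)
Your proof follows essentially the same route as the paper's: a partition of unity reduces the statement to the supporting cones $C(\p,\f)$, where Theorem \ref{cor:affine-lattice-cone} applies and the transverse cones are identified via $\t(C(\p,\f),\mathfrak{g})=\t(\p,\mathfrak{g})$ for $\mathfrak{g}\supseteq\f$, and the uniqueness is proved by the same localization argument combined with the fact that the operators involve only derivatives normal to the face. One small repair: your condition (ii) is impossible as literally stated, because any open neighborhood $U_\f$ of the relative interior of $\f$ accumulates on every proper subface $\mathfrak{g}\subsetneq\f$ (and such a $\mathfrak{g}$ does not contain $\f$), so $U_\f$ meets every neighborhood of that $\mathfrak{g}$; what you actually need, and can arrange, is only that $U_\f$ be disjoint from each face $\mathfrak{g}$ with $\f\not\subseteq\mathfrak{g}$ (possible since the relative interior of $\f$ avoids the finitely many such closed faces), after which a partition of unity whose closed supports lie inside the $U_\f$ automatically makes $\phi_\f$ vanish on a neighborhood of each such $\mathfrak{g}$, which is all that your final recombination step uses.
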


\begin{proof}
First, we prove (\ref{eq:asymptotic}). Let $h(x)$ be a test function.  Using a partition of unity, we may assume that the support of $h$ is contained in an open set $U$ such that
$U\cap \p=U \cap C(\p,\q)$ for some face $\q$ of $\p$ (recall that $C(\p,\q)$ is the supporting cone of $\p$ along $\q$).  If $U$ is  small enough and convex, we observe that
the  set of faces $\CF(C(\p,\q))$ of  $C(\p,\q)$ is in $1$-$1$ correspondence with the set of faces of $\p$ which meet $U$. If  $\d$ is such a face,
 the corresponding face $\f$ of $C(\p,\q)$ is  $ \f= \langle \d\rangle\cap\suppcone(\p,\q)$, where $ \langle \d\rangle$ denotes the affine  span of $\d$. Moreover, $\t(C(\p,\q),\f)= \t(\p,\d)$.
Thus Theorem \ref{cor:localEMLasymptotic-cone} gives
\begin{multline}\label{eq:supporting-cone-expansion}
 \frac{1}{t^\ell}\sum_{x\in t \p\cap \lattice}h(\frac{x}{t}) = \frac{1}{t^\ell}\sum_{x\in tC(\p,\q)}h(\frac{x}{t})\sim \\
 \sum_{k=0}^\infty  \frac{1}{t^k}
\sum_{\stackrel{\{(\d,m)\}, \, \d\in \CF(\p),\, \q\subseteq \d}{m\geq 0, \, m+\ell-\dim\d=k} }\;
 \int_{\langle\d\rangle \cap C(\p,\q)}\mu(\t(\p,\d))_{[m]}(\frac{\partial}{\partial x})\cdot h(x)\, dm_\d(x).
\end{multline}
 So the right hand side  of (\ref{eq:supporting-cone-expansion}) is
$$
 \sum_{k=0}^\infty  \frac{1}{t^k}
\sum_{\stackrel{\{(\d,m)\}, \, \d\in \CF(\p)}{m\geq 0, \, m+\ell-\dim\d=k} }\;
 \int_{\d}\mu(\t(\p,\d))_{[m]}(\frac{\partial}{\partial x})\cdot h(x)\, dm_\d(x).
$$
This is (\ref{eq:asymptotic}), except that the faces of $\p$ are labeled $\d$ instead of $\f$.

Let us prove the uniqueness part of the theorem. By uniqueness of asymptotic expansions, each $k$ term is a uniquely determined distribution
$$
\sum_{\stackrel{m\geq 0, \f\in \CF(\p)}{ m+\ell-\dim\f=k} }
\int_\f (\mu(\t(\p,\f)_{[m]}(\frac{\partial}{\partial x}))\cdot h)(x)dm_\f(x).
$$
By decreasing induction on $\dim \f$,  it follows that each $(\f,m)$ term is a uniquely determined distribution
$$
\int_\f (\mu(\t(\p,\f)_{[m]}(\frac{\partial}{\partial x}))\cdot h)(x)dm_\f(x).
$$
Indeed,  if $\f$ is a facet, we can restrict the support of $h$ so that it does not meet the other facets, etc.

Furthermore, as
$\mu(\t(\p,\f)_{[m]}(\frac{\partial}{\partial x})$ contains only  derivatives which are orthogonal to $\f$, it is  uniquely determined.
\end{proof}

\begin{remark}
For simplicial polytopes, one can just use  Lemma \ref{lem:asympsimplicial} instead of Theorem \ref{cor:affine-lattice-cone}.
\end{remark}

\subsection{Local behavior}
In order to illustrate the local behavior of the asymptotic expansion of Theorem  \ref{th:main}, we will compute the first terms for two triangles in the plane.

First we observe that the terms $F_1$ and $F_2$ are easily computed out of Theorem \ref{th:main} for any lattice polyhedron $\p$.

If $\p$ is lattice,  then $\mu(\t(\p,\p))= 1$, so the relation  $m+\ell-\dim \f=1$ is obtained exactly when $\f$ is a facet and $m=0$. Now, for a facet, we have  $\mu_{[0]}(\t(\p,\f))=- b_1=\frac{1}{2} $. This proves the formula $\langle F_1,h\rangle =\frac{1}{2}\int_{\partial \c}h$.

If $\p$ is a Delzant polyhedron, the next term $\langle F_2,h\rangle$ is also easy to compute with Theorem \ref{th:main}. For a facet $\f$, let $u_\f$ be the primitive generator of the transverse cone. For a face $\f$ of codimension $2$, let $(u^1_\f, u^2_\f)$ be the primitive edge generators of the transverse cone and let $C_\f= \frac{1}{4}+ Q(u^1_\f, u^2_\f) (\frac{1}{\|u^1_\f\|^2}+
\frac{1}{\|u^2_\f\|^2})$. Then, if $\p$ is  Delzant,
$$
\langle F_2,h\rangle =-\frac{1}{12}\sum_{\f,codim \f=1}\int_\f \partial_{u_\f}\cdot h+\sum_{\f,\codim \f=2}\int_\f C_\f h.
$$
If $\p$ is not Delzant, the two-dimensional transverse cones are still simplicial, though maybe  not unimodular, so the term $F_2$ involves arithmetic expressions (see \cite{MR2343137}).
\begin{example}\label{ex:local-behavior}
We write these formulas  for the following two triangles $\p$ and $\p'$.

   $\p$ is the triangle with vertices $(0,0)$, $(1,0)$,$(0,1)$. Let $\f_1$ be its horizontal edge, $\f_2$ the vertical one and $\f_3$ the diagonal one. Then
\begin{multline*}
\frac{1}{t^2}
\sum_{(x_1,x_2)\in t\p\cap  \Z^2}h(\frac{x_1}{t},\frac{x_2}{t})\sim
\int_\p h +\frac{1}{2t}\int_{\partial(\p)}h  +\\
 \frac{1}{t^2}\biggl(-\frac{1}{12}(\int_{\f_1}\partial_{x_2}h + \int_{\f_2}\partial_{x_1}h-\int_{\f_3}( \frac{\partial_{x_1} + \partial_{x_2} }{2})\cdot h )+
\\ \frac{1}{4}h(0,0)+\frac{3}{8}h(1,0)+\frac{3}{8}h(0,1)\biggr)+ \cdots.
\end{multline*}

$\p'$ is the triangle with vertices $(0,0)$, $(2,0)$,$(0,3)$, horizontal edge $\f'_1$, vertical edge $ \f'_2$,  diagonal edge $\f'_3$.
Then
\begin{multline*}
\frac{1}{t^2}
\sum_{(x_1,x_2)\in t\p'\cap  \Z^2}h(\frac{x_1}{t},\frac{x_2}{t})\sim
\int_{\p'} h +\frac{1}{2t}\int_{\partial(\p')}h +\\
\frac{1}{t^2}\biggl(
-\frac{1}{12}(\int_{\f'_1}\partial_{x_2}h + \int_{\f'_2}\partial_{x_1}h-\int_{\f'_3}(\frac{3\partial_{x_1} + 2\partial_{x_2} }{13})\cdot h )+\\ \frac{1}{4}h(0,0)+\frac{19}{52}h(2, 0)+ \frac{5}{13}h(0,3)\biggr)+ \cdots.
\end{multline*}
Indeed, the two expansions coincide if the  test function $h$ is supported near $(0,0)$.
\end{example}
\subsection{The remainder? }
For the remainder at order $k$ of the asymptotic expansion, a closed formula (in terms of derivatives of locally polynomial measures supported on faces) looks elusive.
As an example, we compute the remainder at order $k=1$ (Riemann sum minus integral) for the triangle $\p$ of Example \ref{ex:local-behavior}.
\begin{example}\label{ex:remainder}
Let $D_n=\frac{1}{2}(\partial_{x_1}+\partial_{x_2})$ be the normal derivative to the diagonal edge $\f_3$ and let
$D_t=\frac{1}{2}(\partial_{x_1}-\partial_{x_2})$ the tangential derivative.
For $t\in \N$, we have
 \begin{multline*}
 \frac{1}{t^2} \sum_{(x_1,x_2)\in t\p\cap  \Z^2}h(\frac{x_1}{t},\frac{x_2}{t})- \int_\p h(x_1,x_2)dx_1dx_2 \; =\\
\int_\p A(t) d{x_1}d{x_2}+\int_{\f_3} B(t) d m_{\f_3}+\frac{1}{t^2}h(1,0),
 \end{multline*}
where
\begin{multline*}
A(t)= \frac{1}{t}\left((\{t{x_1}\}-1) \partial_{x_1}h+ (\{t{x_2}\}-1)\partial_{x_2} h \right)\; + \\
\frac{1}{t^2} (\{t{x_1}\}-1)(\{t{x_2}\}-1)\partial^2_{{x_1}{x_2}}h ,
\end{multline*}
\begin{multline*}
 B(t)=\frac{3}{2t} h+\frac{1}{t^2} \{t {x_1}\}
 (\{t {x_1}\}-1)D_n\cdot h+
 \frac{1}{2t^2}
 (\{t x_1\}-1)D_t\cdot h.
\end{multline*}
\end{example}

\section{Riemann sums with real scaling parameter over semi-rational polyhedra}\label{section:Dilatation by a real parameter}
\subsection{Asymptotic expansions with step-polynomial coefficients}\label{section:step-poly-asymptotic}

 We recall now the  Euler-Maclaurin summation formula  on the half-line $[s,\infty[$, for any real $s$, for a test function $h(x)$, (\cite{MR2312338}, Theorem 9.2.2).
\begin{multline}\label{eq:EML-dim1-any-s}
    \sum_{x\in \Z, x\geq s}h(x)=
    \int_s^\infty h(x) dx -\sum_{k=1}^n  \frac{B_k(\{- s\})}{k!}h^{(k-1)}(s)\\
    - \int_s^\infty   \frac{B_n(\{- x\})}{n!}h^{(n)}(x)  \, dx.
\end{multline}
For any real $t>0$, let
$$
h_t(x)=\frac{1}{t}h(\frac{x}{t}).
$$
Substituting $h_t$ for $h$ and $ts$ for $s$ in this formula, and changing variables in the integrals on the right-hand-side, we obtain
\begin{multline}\label{eq:EML-dim1-asymptotic-vertex-s}
\frac{1}{t} \sum_{x\in \Z, x\geq ts}h(\frac{x}{t})=
 \int_s^\infty  h(x) dx -
 \sum_{k=1}^{n-1} \frac{1}{t^k} \frac{B_k(\{-t s\})}{k!}h^{(k-1)}(s)\\
 -
  \frac{1}{t^n}   \left(\frac{ B_n(\{-t s\})}{n!} h^{(n-1)}(s)+\int_s^\infty \frac{B_n(\{-t x\})}{n!} h^{(n)}(x)dx\right)
\end{multline}
This formula can be considered as an asymptotic expansion (with  a closed expression for the remainder) when $t\to +\infty$, $t\in \R$, where we allow the coefficient of $\frac{1}{t^k}$ to be  a polynomial function of the fractional part $\{-t s\}$, that is, a step-polynomial function in the sense of the following definition.

\begin{definition}\label{def:step-poly}
A \emph{step-polynomial} function $F(t)$ on $\R$ is an element of the algebra generated by the functions $t\mapsto \{\gamma t\}$, with $\gamma \in \R$.
\end{definition}
If $F(t)$ is a step-polynomial, there exists

1) a finite set $\Gamma$  of real numbers linearly independent over $\Q$.

2) for each $\gamma\in \Gamma$ a finite set of non zero integers $n_{\gamma, k}$

3) a polynomial in several variables, $P(X_{\gamma,k})$,

such that
\begin{equation}\label{eq:step-poly}
F(t)=P(\{n_{\gamma, k} \gamma t\}).
\end{equation}
An example of step-polynomial function with irrational  $\gamma$'s  is displayed in Figure \ref{fig:steppoly}.

A step polynomial function is defined for all $t\in \R$, and has a discrete set of discontinuities.

One must be aware that this set of data  is not uniquely defined by $F(t)$. A simple example is
$$
1-\{t\}-\{-t\}= (1-\{2t\}-\{-2t\})(1-\{3t\}-\{-3t\}).
$$

\begin{figure}
\begin{center}
 \includegraphics[width=10cm]{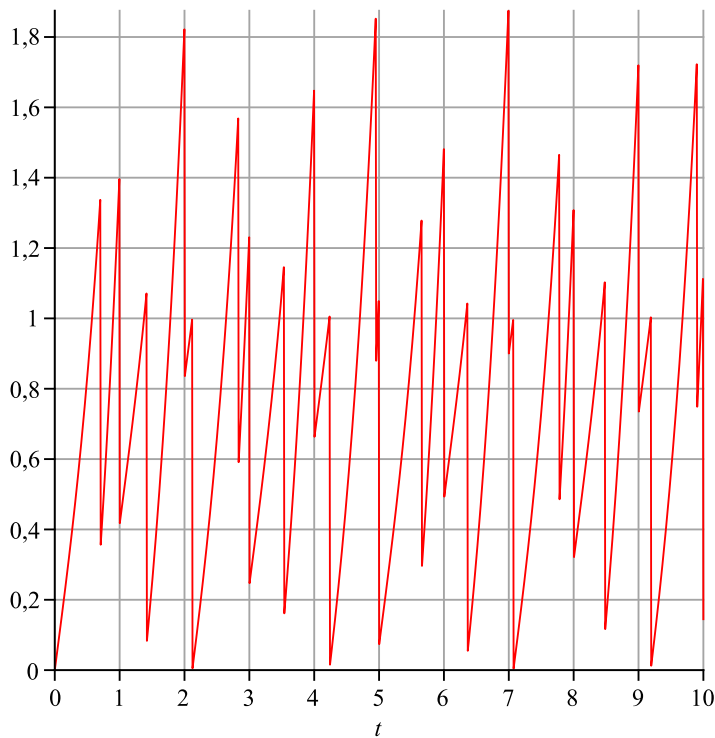}
 \caption{Graph of  $\{\sqrt2 t\}+\{t\}^3$}\label{fig:steppoly}
 \end{center}
\end{figure}

\begin{definition}\label{def:asymptotic1}
A function $\phi(t) $, defined for $t>0$, has an  \emph{asymptotic expansion with step-polynomial coefficients} $F_k(t)$, when $t\to +\infty$, $t\in \R$, written as
\begin{equation}\label{eq:asymptotic1}
 \phi(t) \sim F_0(t)+\frac{ 1}{t} F_1(t)+ \cdots + \frac{ 1}{t^n} F_n(t)+ \cdots,
\end{equation}
if
$ \phi(t) = F_0(t)+\frac{ 1}{t} F_1(t)+ \cdots + \frac{ 1}{t^n} F_k(t) + O(\frac{1}{t^{n+1}})$  when $t\to +\infty$, for every $n\geq 1$.
\end{definition}
For lack of a reference, we give a proof for the following result.
\begin{proposition}
 If such an asymptotic expansion exists, it is unique.
\end{proposition}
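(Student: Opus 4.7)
The plan is to establish uniqueness via a standard inductive reduction to the following lemma: any step-polynomial $F$ with $F(t)\to 0$ as $t\to +\infty$ is identically zero. Granted this, set $H_k = F_k - G_k$ where $F_k, G_k$ are the coefficients in two hypothetical expansions of $\phi$. Taking $n=0$ in Definition \ref{def:asymptotic1} applied to $\phi - \phi = 0$ gives $H_0(t) = O(1/t)$, hence $H_0(t)\to 0$, hence $H_0 \equiv 0$ by the lemma. Having established $H_0 = \cdots = H_{k-1} = 0$, multiplying the defining estimate by $t^k$ yields $H_k(t) = O(1/t)$, and the lemma again gives $H_k \equiv 0$.

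To prove the lemma, write $F(t) = P(\{\mu_1 t\}, \ldots, \{\mu_N t\})$ for a polynomial $P$ and reals $\mu_i$. Consider the continuous one-parameter subgroup $\pi\colon \R \to \R^N/\Z^N$ defined by $t \mapsto (\mu_1 t, \ldots, \mu_N t) \bmod \Z^N$. Its image is a subsemigroup, and its closure $H$ is a closed subsemigroup of the compact group $\R^N/\Z^N$, hence a closed subgroup. The same argument applied to $\pi(\R_{\geq T})$ shows that its closure equals $\pi(T)+H=H$ (since $\pi(T)\in H$), so the forward orbit is dense in $H$ for every starting time $T$.

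Let $t_0\in \R$ be a continuity point of $F$, meaning $\mu_i t_0 \notin \Z$ for every $i$, so that the lift $\tilde\pi(t_0) = (\{\mu_i t_0\})$ lies in the open cube $(0,1)^N$. By density of $\pi([T,\infty))$ in $H\ni \pi(t_0)$, for any $\epsilon>0$ there exist arbitrarily large $t$ with $\tilde\pi(t)$ in an $\epsilon$-neighborhood of $\tilde\pi(t_0)$ inside $(0,1)^N$. Continuity of the polynomial $P$ then gives $|F(t)-F(t_0)|$ arbitrarily small for arbitrarily large $t$, while $F(t)\to 0$ forces $|F(t_0)|<\epsilon$. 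Letting $\epsilon\to 0$ yields $F(t_0)=0$. The set of continuity points is open and dense. At a discontinuity point $t_0$, the value $F(t_0) = P(\tilde\pi(t_0))$ equals the one-sided limit of $F$ along nearby continuity points, because $\{\gamma t\}$ is right-continuous for $\gamma>0$ and left-continuous for $\gamma<0$; since $F$ vanishes at those nearby points, $F(t_0)=0$ as well. Hence $F \equiv 0$.

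I expect the main obstacle to be the orbit-closure argument: the key point is that, for a one-parameter subgroup of a compact abelian group, the forward and two-sided orbits have the same closure, and this closure is itself a closed subgroup (because a closed subsemigroup of a compact group is automatically a subgroup). This replaces an appeal to Kronecker's theorem and, crucially, makes the density statement uniform in the starting time $T$, which is what allows us to find arbitrarily large $t$ with $\tilde\pi(t)$ close to a prescribed $\tilde\pi(t_0)$. A secondary technical point is the passage from continuity points to discontinuity points of $F$, handled by the one-sided continuity of the fractional-part function.
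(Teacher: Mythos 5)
Your reduction of the proposition to the key lemma (a step-polynomial $F$ with $F(t)\to 0$ as $t\to+\infty$ vanishes identically) is the same as the paper's, and your proof of the lemma \emph{at continuity points} is correct and genuinely different: replacing Kronecker's theorem by the fact that the closure of a one-parameter subsemigroup of the torus is a closed subgroup is a clean way to get density of the forward orbit uniformly in the starting time. The gap is in your treatment of the discontinuity points. The claim that $F(t_0)=P(\tilde\pi(t_0))$ equals a one-sided limit of $F$ along nearby continuity points is false as soon as the step-polynomial involves $\gamma$'s of both signs whose discontinuities coincide at $t_0$: the function $F(t)=1-\{t\}-\{-t\}$ (the very example displayed in the paper after Definition \ref{def:step-poly}) equals $1$ at integers and $0$ elsewhere, so its value at an integer is neither its left nor its right limit. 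Worse, this example shows that the value of a step-polynomial at a discontinuity point simply cannot be recovered from its values at nearby continuity points, so no local limiting argument of this kind can be repaired; as it stands your proof only shows that $F$ vanishes off a discrete set, whereas the proposition requires the coefficients $F_k(t)$ to be determined for every $t$.

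The reason the discontinuity points are genuinely harder is that approximate recurrence on the torus cannot distinguish the configuration $\{\mu_i t_0\}=0$ from configurations where $\{\mu_i t\}$ is close to $1$; when $\mu_i$ of both signs are present one cannot approach the zero configuration from the correct side in all coordinates simultaneously, and exact returns $\pi(t)=\pi(t_0)$ with $t$ large need not exist (e.g. $\mu_1=1$, $\mu_2=\sqrt2$). One must therefore use the hypothesis $F(t)=O(1/t)$ at large $t$ whose ``rational'' fractional parts recur \emph{exactly}. This is what the paper's argument achieves: normalize $F(t)=P(\{n_{\gamma,k}\gamma t\})$ with $\Gamma$ linearly independent over $\Q$ and $1\in\Gamma$, write $t=N+a$ with $a$ fixed (so the coordinates attached to $\gamma_0=1$ are frozen at their exact values, including $0$), apply Kronecker in the remaining directions, and then use that $P$ is a polynomial, together with the identities of the form $\{nx\}=nx-j$ on subintervals, to extend the vanishing from a dense (hence open, by polynomiality) set of values of the irrational coordinates to \emph{all} values, including the boundary ones. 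Your orbit-closure argument can stand in for Kronecker in the continuity-point case, but you still need this exact-recurrence plus polynomial-extension step (or an equivalent) to cover the discontinuity points.
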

\begin{proof}
First we prove that if $F(t)$ is a step-polynomial function and $F(t)=O(\frac{1}{t})$ when $t\to +\infty$,  then $F(t)=0$ for all $t\in \R$.
If $F(t)$ has an expression with  a single $\gamma$,  then $F(t)$ is periodic, so it must be identically $0$.

Let us consider the case where $\Gamma$ consists of two elements, $1$ and $\gamma \notin \Q$.  For instance, assume that
$$
F(t)= P(\{t\},\{-5t\},\{\gamma t\},\{ 2 \gamma t\})=O(\frac{1}{t}).
$$
For  $t=N+ a$ with $N\in \N$ and $a$ fixed, we have
$$
F(N+a)=P(\{a\},\{-5a\},\{\gamma N + \gamma a\},\{ 2 (\gamma N +  \gamma a)\})=O(\frac{1}{N}).
$$
Fix $b $ such that $0<b<1$. By Kronecker theorem in one dimension, (\cite{MR2445243}, Theorem 438) there exists a strictly increasing sequence $N_m\in \N$ such that $\lim_{m\to \infty}\{\gamma N_m + \gamma a\}= b$.
As $ \{2 x\}= 2x$ if $x\in [0,\frac{1}{2}[$ and  $ \{2 x\}= 2x-1$ if $x\in [\frac{1}{2}, 1[$, we  conclude that $P(\{a\},\{-5a\},b, 2 b)=0$ for any $a$ and $b\in ]0,\frac{1}{2}[$,  hence for  any $b$ since $P$ is a polynomial, and similarly  $P(\{a\},\{-5a\},b, 2 b-1)=0$ for any $b$.  Therefore,
$P(\{a\},\{-5a\},\{b\},\{ 2 b\})=0$ for any $a$ and $b$, hence $F(t)$ is identically $0$.

A similar argument works in the general case. If $\Gamma = (\gamma_0, \gamma_1, \ldots, \gamma_p)$, we can assume that $\gamma_0=1$. By  Kronecker theorem, (\cite{MR2445243}, Theorem 442),  the set of points $(\{N\gamma_1+u_1\},\ldots, \{N\gamma_p+u_p\}), N\in \N$  is dense in $[0,1[^p$ for any $(u_1,\ldots, u_p)$.

Thus we have proved that $F(t)=0$. The proposition follows by induction on $k$ as usual. \end{proof}

\subsection{Step-polynomials on $V$. $\mu$ function of a semi-rational affine cone}
We fix a Euclidean scalar product $Q$ on $V$. We recall now the properties of the holomorphic function $\mu_Q(\a)$ when  $\a$ is a semi-rational affine cone (a rational  cone shifted by a real vertex, not just a rational one). We begin with some examples.
\begin{example}\label{ex:mu-dim1-1-semi-rational}
In dimension one, $V=\R$ and $\lattice=\Z$, for the cone $s+\R_{\geq 0}$, where $s\in \R$, we have
\begin{equation}\label{eq:mu-dim1-semi-rational}
\mu(s+\R_{\geq 0})(\xi)=\frac{\e^{\{-s\}\xi}}{1-\e^{\xi}}+\frac{1}{\xi}=- \sum_{m\geq 0}B_{m+1}(\{-s\})\xi^m.
\end{equation}
\end{example}
\begin{example}\label{ex:mu-dim1-2-semi-rational}
In  $\R^d$ with standard lattice and Euclidean scalar product, let $\c$ be the one-dimensional cone $ \c=\R_{\geq 0}e_1$, let $s=(s_1,\dots, s_d)$.
Then
\begin{align*}
&\mu(s+\R_{\geq 0}e_1)(\xi)\\
& =\frac{\e^{\{-s_1\}\xi_1}}{1-\e^{\xi_1}}+\frac{1}{\xi_1} =- \sum_{m\geq 0}B_{m+1}(\{-s_1\})\xi_1^m,  \mbox{  if }(s_2,\dots, s_d)\in \Z^{d-1},\\
&=0  \mbox{  otherwise }.
\end{align*}
 Therefore
\begin{equation}\label{eq:mu-dim1-2}
\mu(s+\R_{\geq 0}e_1)(\xi)= -\prod_{j=2}^d (1-\{s_j\}-\{-s_j\}) \sum_{m\geq 0}B_{m+1}(\{-s_1\})\xi_1^m.
\end{equation}
\end{example}
 We see that these functions involve step-polynomials in several variables.
\begin{definition}\label{def:step-poly-several-variables}
 A step-polynomial function on $V$ is an element of the algebra generated by the functions $s \mapsto \{\langle \gamma,s\rangle\}$, with $\gamma\in V^*$.  A \emph{rational step-polynomial} function is an element of the algebra generated by the functions $s \mapsto \{\langle\gamma, s\rangle\}$, with rational  $\gamma$'s.
\end{definition}

Step-polynomials occur when we consider the generating function of an affine  cone with  real vertex (\cite{SLII2014},Theorem 2.22). We recall the result.
Let $\c\subset V$ be a pointed cone and $s\in V$. Consider the \emph{shifted generating function}
\begin{equation}\label{eq:shifted-generating-function}
  M(s,\c)(\xi)= \e^{-\langle \xi,s \rangle}S(s+\c)(\xi).
\end{equation}

\begin{lemma}\label{lemma:S(s+c)with-step-polys}
Let $\c\subset V$ be a  pointed cone. Let $v_1, \ldots, v_n$ be the set of its lattice edge generators.
The homogeneous components of $M(s,\c)(\xi)$ are of the form $M(s,\c)_{[m]}(\xi)=\frac{P_{m+n}(s,\xi)}{\prod_j \langle \xi,v_j\rangle}$ where $P_{m+n}(s,\xi)$ is a polynomial of degree $m+n$ in $\xi$ with coefficients which are rational step-polynomial functions of $s$.
\end{lemma}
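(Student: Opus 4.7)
My plan is to reduce to the simplicial case by subdivision along existing edges, then analyze the shifted generating function of a simplicial cone via the box formula, and finally combine the pieces.

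First, I would subdivide $\c$ into simplicial cones $\c_\alpha$ without introducing any new edge, so that the edges of each $\c_\alpha$ are indexed by a subset $I_\alpha\subseteq\{1,\dots,n\}$. By linearity of the indicator on $s+\c$, we get $S(s+\c) = \sum_\alpha n_\alpha S(s+\c_\alpha)$ and hence $M(s,\c) = \sum_\alpha n_\alpha M(s,\c_\alpha)$. The common denominator $\prod_j \langle\xi,v_j\rangle$ contains $\prod_{j\in I_\alpha}\langle\xi,v_j\rangle$ as a factor, so from a formula $M(s,\c_\alpha)_{[m]} = Q_{m+|I_\alpha|}(s,\xi)/\prod_{j\in I_\alpha}\langle\xi,v_j\rangle$ I can recover the claimed form by multiplying numerator and denominator by the missing factors $\langle\xi,v_j\rangle$, $j\notin I_\alpha$, raising the degree to $m+n$.

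Second, for a simplicial $\c_\alpha$ with edge generators $(v_j)_{j\in I_\alpha}$, the formula recalled in Subsection~\ref{subsection:generating-function} gives
\begin{equation*}
M(s,\c_\alpha)(\xi) = T_\alpha(s,\xi)\,\prod_{j\in I_\alpha}\frac{1}{1-\e^{\langle\xi,v_j\rangle}},\qquad T_\alpha(s,\xi):=\sum_{\tau}\e^{\langle\xi,\sum_{j}\tau_j v_j\rangle},
\end{equation*}
where the sum runs over those $\tau=(\tau_j)_{j\in I_\alpha}\in[0,1[^{I_\alpha}$ with $s+\sum_{j}\tau_j v_j\in\lattice$. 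The Toddlike factor has Laurent expansion $\frac{(-1)^{|I_\alpha|}}{\prod_{j\in I_\alpha}\langle\xi,v_j\rangle}\cdot G(\xi)$ where $G(\xi)$ is the holomorphic Todd-type series with rational (constant in $s$) coefficients. The key step is therefore to show that $T_\alpha(s,\xi)$ is holomorphic in $\xi$ with Taylor coefficients which are rational step-polynomial functions of $s$: expanding the exponential gives
\begin{equation*}
T_\alpha(s,\xi) = \sum_{\underline p\in\N^{I_\alpha}}\Big(\prod_{j\in I_\alpha}\frac{\langle\xi,v_j\rangle^{p_j}}{p_j!}\Big)\,\sigma_{\underline p}(s),\qquad \sigma_{\underline p}(s):=\sum_{\tau}\prod_{j\in I_\alpha}\tau_j^{p_j},
\end{equation*}
so the claim reduces to $\sigma_{\underline p}(s)$ being a rational step-polynomial in~$s$.

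Third, I would establish this last point by a standard coset argument (essentially Theorem~2.22 of \cite{SLII2014}, which the paper cites). Choose a $\Z$-basis of $\lattice$ adapted to $\lin(\c_\alpha)\cap\lattice$ and to the sublattice $\sum_{j\in I_\alpha}\Z v_j$; the quotient of these two lattices is a finite group. Writing $s$ in this adapted basis, the condition $s+\sum_j\tau_j v_j\in\lattice$ splits into (i) a condition that the transverse components of $s$ lie in $\lattice$, captured by step-polynomial indicator functions of the form $1-\{\langle\gamma,s\rangle\}-\{-\langle\gamma,s\rangle\}$ with rational~$\gamma$, and (ii) for each of finitely many residue cosets, an explicit expression $\tau_j = \{\ell_j(s)\}$ with $\ell_j$ a rational linear form in the coordinates of~$s$. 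The sum $\sigma_{\underline p}(s)$ is then a finite sum of products of fractional parts of rational linear forms in~$s$, i.e.\ a rational step-polynomial. Multiplying $T_\alpha(s,\xi)$ by $G(\xi)\big/\prod_{j\in I_\alpha}\langle\xi,v_j\rangle$ and extracting the homogeneous component of degree $m$ yields the expression $Q_{m+|I_\alpha|}(s,\xi)\big/\prod_{j\in I_\alpha}\langle\xi,v_j\rangle$, and Step~1 upgrades this to the stated form with denominator $\prod_j\langle\xi,v_j\rangle$ and numerator of degree $m+n$. The main obstacle is the coset argument in Step~3: controlling the dependence of $\sigma_{\underline p}(s)$ on $s$, and in particular writing the indicator of $s\in\lattice+\lin(\c_\alpha)^{\perp}$ as a rational step-polynomial as in Example~\ref{ex:mu-dim1-2-semi-rational}, is the only non-routine part of the argument.
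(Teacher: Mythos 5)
Your proposal is sound, but note that the paper does not prove this lemma at all: it is simply quoted from \cite{SLII2014} (Theorem 2.22). So what you have written is not a variant of the paper's argument but a self-contained proof where the paper only gives a citation, and it is essentially the standard argument behind that reference: subdivide $\c$ into simplicial cones using only the rays $v_1,\dots,v_n$ (the same device the paper uses at the end of Subsection~\ref{subsection:generating-function} and in the appendix), apply the box formula $S(s+\c_\alpha)=S(s+\b_\alpha)\prod_{j\in I_\alpha}(1-\e^{\langle\xi,v_j\rangle})^{-1}$, observe that $\e^{-\langle\xi,s\rangle}S(s+\b_\alpha)$ is entire in $\xi$ with Taylor coefficients that are rational step-polynomials of $s$, and clear denominators to $\prod_{j=1}^n\langle\xi,v_j\rangle$, which produces exactly the degree $m+n$ numerator. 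The only place where your sketch is thinner than a complete proof is the uniformity in $s$ of Step~3, which you correctly single out as the crux. Concretely: the box exponents are fractional parts of \emph{affine} rational forms, $\tau_j=\{\langle\eta_j,g_k\rangle-\langle\eta_j,s\rangle\}$, where $\eta_j$ is a rational dual family to $(v_j)_{j\in I_\alpha}$ and $g_k$ runs over coset representatives of $(\lattice\cap\lin(\c_\alpha))/\sum_{j\in I_\alpha}\Z v_j$; one should check that $\{\langle\gamma,s\rangle+q\}$ with $\gamma,q$ rational lies in the algebra generated by the $\{\langle\gamma',s\rangle\}$, $\gamma'$ rational (it does, using that $q\{x\}-\{qx\}$ detects the interval $[i/q,(i+1)/q)$ containing $\{x\}$), and, when $\c_\alpha$ is not full-dimensional, that the number of box points with prescribed residues is $0$ or $1$ according to conditions of the form $\langle\gamma,s\rangle\in\Z+c$, which is precisely where the indicator factors $1-\{\cdot\}-\{-\cdot\}$ of Example~\ref{ex:mu-dim1-2-semi-rational} enter, as you anticipate. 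With those details written out, your argument is complete and in fact supplies more than the paper itself records for this statement.
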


Let us now state the properties of the $\mu$-function of a pointed affine cone.
\begin{proposition} \label{prop:mu-semi-rational-cone} Let $\a\subset V$ be a semi-rational affine cone.

\noindent (i) For any $v\in \lattice$, we have $\mu(v+\a)=\mu(\a)$.

\noindent (ii) Let $s$ vary in $V$. For   every integer
$m \geq 0$,  the homogeneous component $\mu(s+\a)_{[m]}(\xi)$ is a polynomial function of $\xi\in V^*$  with coefficients which are step-polynomial functions of $s$.

\noindent (iii)  For  every integer
$m \geq 0$, the homogeneous component $\mu_{[m]}(t\a)(\xi)$ is a polynomial function of $\xi\in V^*$ with coefficients step-polynomial functions of $t\in \R$. If the scalar product $Q$ is rational, the coefficients are rational step-polynomials.
\end{proposition}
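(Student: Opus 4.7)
My approach is to derive Proposition \ref{prop:mu-semi-rational-cone} from the decomposition Theorem \ref{th:local-EML-cone}, which characterizes $\mu$ as a uniquely determined component in the decomposition of $S$, combined with Lemma \ref{lemma:S(s+c)with-step-polys}, which supplies the step-polynomial dependence of $S(s+\c)$ on the real vertex $s$.

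For part (i), when $v\in \lattice$ we have $S(v+\a)(\xi)=\e^{\langle \xi,v\rangle}S(\a)(\xi)$ and $I(v+\f)(\xi)=\e^{\langle \xi,v\rangle}I(\f)(\xi)$ for each face $\f$, while $\t(v+\a,v+\f)$ differs from $\t(\a,\f)$ by the image of $v$ in $V/\lin(\f)$, which is a lattice point there. Plugging into the decomposition of Theorem \ref{th:local-EML-cone} (extended to the affine case, as in the discussion preceding Theorem \ref{cor:affine-lattice-cone}), dividing by $\e^{\langle\xi,v\rangle}$, and invoking uniqueness of the decomposition for the fixed scalar product $Q$ reproduces the decomposition for $S(\a)$. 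Matching the factors and proceeding by induction on $\codim \f$ yields $\mu(v+\a)=\mu(\a)$.

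For part (ii), I would proceed by induction on $\dim \a$. The case $\dim \a=0$ gives $\mu(\a)=1$. For the inductive step, write $\a=s+\c$ with $\c$ a rational pointed cone. Isolating the vertex contribution in (the affine version of) Theorem \ref{th:local-EML-cone} yields
\begin{equation*}
\mu(s+\c)(\xi) \;=\; M(s,\c)(\xi)\;-\;\sum_{\f\neq\{0\}}\mu\bigl(\t(s+\c,\,s+\f)\bigr)(\xi)\,I(\f)(\xi),
\end{equation*}
where $M(s,\c)=\e^{-\langle\xi,s\rangle}S(s+\c)$ is as in Lemma \ref{lemma:S(s+c)with-step-polys} and the sum runs over proper faces $\f$ of $\c$ through the origin. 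Lemma \ref{lemma:S(s+c)with-step-polys} asserts that the homogeneous components of $M(s,\c)$ are polynomials in $\xi$ whose coefficients are rational step-polynomial functions of $s$. Each lower-dimensional transverse cone $\t(s+\c,s+\f)$ has vertex $\proj_{V/\lin(\f)}(s)$, a linear function of $s$; by the inductive hypothesis applied in $V/\lin(\f)$, its homogeneous components are polynomials in $\xi$ with step-polynomial coefficients in the projected vertex, hence, after composition with a linear projection, in $s$. Since each $I(\f)(\xi)$ is a rational function of $\xi$ independent of $s$, gathering the homogeneous component of degree $m$ delivers a polynomial in $\xi$ with step-polynomial coefficients in $s$.

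For part (iii), since $\c$ is a cone and $t>0$, $t\c=\c$, so $t\a=ts+\c$. Applying (ii) with $s$ replaced by $ts$, each coefficient of $\mu_{[m]}(t\a)(\xi)$ is a polynomial expression in terms $\{\langle\gamma_i,ts\rangle\}=\{t\langle\gamma_i,s\rangle\}$, hence a step-polynomial function of $t$ with slopes $\langle\gamma_i,s\rangle$. When $Q$ is rational, the $\gamma_i$'s produced by Lemma \ref{lemma:S(s+c)with-step-polys} (from rational edge data) and by the $Q$-orthogonal decompositions in the induction lie in $V^*_\Q$, which yields the rational step-polynomial claim in the appropriate sense.

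\textbf{Main obstacle.} The delicate point is bookkeeping the step-polynomial structure through the induction: one must verify that Theorem \ref{th:local-EML-cone} remains valid and uniquely determines $\mu(s+\c)$ for an affine cone with real vertex; that the induction hypothesis transfers from $V/\lin(\f)$ back to $V$ via the projection $s\mapsto \proj_{V/\lin(\f)}(s)$; and that multiplying step-polynomials by rational functions $I(\f)(\xi)$ (whose denominators only involve rational edge generators) preserves the polynomial-in-$\xi$, step-polynomial-in-$s$ form. The renormalization definition of $\mu$ from the appendix (Definition \ref{def:mu-by-renormalization}) should make several of these verifications transparent, since every ingredient of that construction depends on $s$ only through the fractional parts $\{\langle\gamma,s\rangle\}$ for rational $\gamma$.
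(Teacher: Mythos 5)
Your argument is essentially correct in outline, but it runs in the opposite direction from the paper's, and at a higher cost. The paper does not pass through Theorem \ref{th:local-EML-cone} at all: it reads the proposition directly off the renormalization definition (Definition \ref{def:mu-by-renormalization}). For (i), since $S(v+\a)=\e^{\langle\xi,v\rangle}S(\a)$ for $v\in\lattice$, the function $\e^{-\langle\xi,s\rangle}S(s+\c)$ being renormalized is literally unchanged by the lattice shift, so $\mu(v+\a)=\mu(\a)$ in one line. For (ii), $\mu(s+\c)_{[m]}=R_Q\bigl(M(s,\c)_{[m]}\bigr)$, and since Lemma \ref{lemma:S(s+c)with-step-polys} writes $M(s,\c)_{[m]}$ as $P_{m+n}(s,\xi)/\prod_j\langle\xi,v_j\rangle$ with coefficients that are rational step-polynomials of $s$, while $R_Q$ acts linearly on the numerator coefficients through operations depending only on the fixed denominator, the conclusion is immediate; (iii) then follows by substituting $ts$ for $s$. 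Your route instead isolates $\mu(s+\c)$ as the vertex term of the decomposition of Theorem \ref{th:local-EML-cone} for an affine cone with \emph{real} vertex and inducts on dimension. This can be made to work, but it costs you exactly the ingredient you flag as the main obstacle: the real-vertex version of the decomposition together with its uniqueness is never stated in the paper (Theorem \ref{th:localEML} in the appendix is proved for rational affine cones, although the residue argument of Lemma \ref{lemma:residue} does go through for a real vertex), so within the paper's logical structure you are assuming an unproved extension rather than deriving the proposition from established results. You also gloss over a small step when ``gathering'' homogeneous components: the degree-$m$ component of the right-hand side is a rational function $N_s(\xi)/D(\xi)$ with step-polynomial numerator coefficients which happens to be a polynomial, and one needs a short argument (e.g.\ evaluation at finitely many generic points) to see that the resulting polynomial coefficients are again step-polynomial in $s$. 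Your closing remark is the right instinct: working from Definition \ref{def:mu-by-renormalization} is not merely a way to smooth these verifications, it \emph{is} the paper's proof, and it makes both the induction and the appeal to Theorem \ref{th:local-EML-cone} unnecessary.
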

\begin{proof}
 We deduce  properties (i) and (ii) from the definition of $\mu(\a)$ by renormalization \ref{def:mu-by-renormalization}. (iii) follows immediately from (ii)

We have  $S(v+\a)=\e^{\langle \xi,v \rangle} S(\a)$  if $v\in \lattice$, hence (i).

(ii) is a consequence of Lemma \ref{lemma:S(s+c)with-step-polys}:  the renormalization $R_Q\bigl(\frac{P_{m+n}(s,\xi)}{\prod_j \langle \xi,v_j\rangle}\bigr)$ of $M(s,\c)_{[m]}(\xi)$ is also a polynomial in $\xi$ with coefficients which are step-polynomial functions of $s$.
\end{proof}

\subsection{ Local Euler-Maclaurin asymptotic expansion for a semi-rational polyhedron}
We can now state the local Euler-Maclaurin asymptotic  expansion for  Riemann sums over a semi-rational  polyhedron when the scaling parameter is  real, not only integral. Given the above  discussion on asymptotic expansions  with step-polynomial coefficients, the proof is parallel to the case of a lattice polyhedron and an integral scaling parameter in Section \ref{section:lattice-polyhedron}. So we will leave the details to the reader.

The main step is the following analogue of  Theorem \ref{th:asymptotic-cone} about  Riemann sums over a cone,  in the case  of a semi-rational affine cone and a real  scaling parameter.

\begin{theorem}\label{th:asymptotic-cone-real}
Let $\c\subset V $ be a  pointed cone of dimension $\ell$. Let $s\in V$. Let $\lambda\in V^*$ be such that $ -\lambda$ lies in the dual cone of  $\c$.
 Consider the  distribution on $V$ given by
\begin{equation}\label{eq:asymptotic-Fourier-realt}
\langle R_t(s +\c) ,h\rangle =\frac{1}{t^\ell}\sum_{x\in (ts+\c) \cap \lattice}h(\frac{x}{t}).
\end{equation}
It  has an asymptotic expansion with step-polynomial coefficients when $t\to \infty$, t real, the Fourier transform of which is given by
\begin{eqnarray}\label{eq:EMLasymptotic-anycone-real-t}
 \CF( R_t( s +\c) )(\xi)&=&\frac{1}{t^\ell} \lim\limits_\lambda (S(t s+\c)(-i \frac{\xi}{t}))  \\
\nonumber & =& \frac{1}{t^\ell} \e^{-i\langle \xi,s \rangle} \lim\limits_\lambda ( M(t s,\c) (-i \frac{\xi}{t}) ) \\
\nonumber &\sim&  \e^{-i\langle \xi,s \rangle}\sum_{k=0}^\infty \frac{1}{t^k}\lim\limits_\lambda (
 M(t s,\c)_{[k-\ell]}(-i\xi)).
\end{eqnarray}
\end{theorem}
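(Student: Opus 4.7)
The plan is to parallel the proof of Theorem~\ref{th:asymptotic-cone}, with two modifications: apply the one-dimensional Euler--Maclaurin formula (\ref{eq:EML-dim1-asymptotic-vertex-s}) at an arbitrary real vertex, and interpret the resulting expansion in the step-polynomial sense of Definition~\ref{def:asymptotic1}. The proof splits into three steps: the one-dimensional case, a reduction to simplicial unimodular cones by subdivision, and the verification for a simplicial unimodular cone via its product structure.

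\medskip

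First I would treat $\c = \R_{\geq 0}$, $s \in \R$. Formula (\ref{eq:EML-dim1-asymptotic-vertex-s}) is already an asymptotic expansion as $t \to \infty$, $t\in\R$, with step-polynomial coefficients $B_k(\{-ts\})/k!$ and an explicit remainder of order $O(1/t^n)$. Taking Fourier transforms term by term, the leading integral $\int_s^\infty h(x)\,dx$ contributes $e^{-is\xi}\lim_\lambda(1/(i\xi))$, and the $k$-th boundary term produces a multiple of $e^{-is\xi}(i\xi)^{k-1}$ with coefficient involving $B_k(\{-ts\})/k!$. On the other hand, by Example~\ref{ex:mu-dim1-1-semi-rational}, $M(ts,\R_{\geq 0})(\xi) = e^{\{-ts\}\xi}/(1-e^\xi)$ has Laurent expansion $-\sum_k B_k(\{-ts\})\xi^{k-1}/k!$, so that $\lim_\lambda M(ts,\R_{\geq 0})_{[k-1]}(-i\xi)$ matches, up to the factor $e^{-is\xi}$, the Fourier transform of the $k$-th term in (\ref{eq:EML-dim1-asymptotic-vertex-s}). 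This establishes (\ref{eq:EMLasymptotic-anycone-real-t}) in dimension one.

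\medskip

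For general pointed $\c$, I would subdivide into simplicial unimodular cones $\c_\alpha$ with $[\c] = \sum_\alpha n_\alpha [\c_\alpha]$. Since $\c_\alpha \subseteq \c$, the vector $-\lambda$ lies in the dual cone of each $\c_\alpha$, and translating by $ts$ gives $[ts+\c] = \sum_\alpha n_\alpha [ts+\c_\alpha]$ as a relation of indicator functions, hence the same decomposition for $S(ts+\c)$, for $M(ts,\c)$, and for the Riemann sum. Thus it suffices to treat simplicial unimodular $\c$. In that case, choose an integral basis $v_1,\ldots,v_d$ of $\lattice$ extending the edge generators $v_1,\ldots,v_\ell$ of $\c$, and decompose $s = \sum_j s_j v_j$. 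A lattice point in $ts+\c$ exists only when $ts_j \in \Z$ for $j > \ell$; setting $\chi_j(t) = 1 - \{ts_j\} - \{-ts_j\}$, equal to the indicator of $ts_j \in \Z$, the Riemann sum factorizes as $\prod_{j>\ell}\chi_j(t)$ times a product over $j \leq \ell$ of one-dimensional Riemann sums with vertices $ts_j$. Correspondingly,
$$
M(ts,\c)(\xi) = \prod_{j>\ell}\chi_j(t)\cdot\prod_{j\leq\ell}\frac{e^{\{-ts_j\}\langle\xi,v_j\rangle}}{1-e^{\langle\xi,v_j\rangle}},
$$
so the Laurent expansion of $M(ts,\c)$ factorizes as the product of one-dimensional Laurent expansions. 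Multiplying the one-dimensional asymptotic expansions from the first step and matching homogeneous components gives (\ref{eq:EMLasymptotic-anycone-real-t}) for $\c$.

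\medskip

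The main obstacle is to verify that the product of step-polynomial asymptotic expansions is itself an asymptotic expansion of the same type, and that its coefficients are precisely the homogeneous components of the product of generating functions. This is straightforward because step-polynomial functions are bounded, so the product of two terms of orders $O(1/t^a)$ and $O(1/t^b)$ with bounded step-polynomial coefficients is $O(1/t^{a+b})$; combined with the uniqueness of step-polynomial asymptotic expansions proved in Section~\ref{section:step-poly-asymptotic}, this identifies the resulting coefficients with the claimed $\lim_\lambda M(ts,\c)_{[k-\ell]}(-i\xi)$.
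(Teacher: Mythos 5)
Your proposal is correct and follows essentially the same route as the paper: the one-dimensional Euler--Maclaurin formula (\ref{eq:EML-dim1-asymptotic-vertex-s}) at a real vertex, then reduction to unimodular simplicial cones by subdivision and the product structure, exactly as the paper's proof (which simply refers back to the proof of Theorem \ref{th:asymptotic-cone}). You merely make explicit the details the paper leaves to the reader, such as the indicator factors $1-\{ts_j\}-\{-ts_j\}$ for the transverse coordinates and the bookkeeping of remainders when iterating the one-dimensional expansions.
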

\begin{proof}
    If $\c=\R_{\geq 0}$ and $s$ is any real number, (\ref{eq:EMLasymptotic-anycone-real-t}) follows from the classical dimension one Euler-Maclaurin formula (\ref{eq:EML-dim1-any-s}) for a half-line. The general case is reduced to dimension one by subdivision of cones, as in the proof of Theorem \ref{th:asymptotic-cone}.
\end{proof}

From there, we deduce the case of a polyhedron in a similar manner to Section \ref{section:lattice-polyhedron}.

\begin{theorem}\label{th:EML-local-asymptotic-real-dilatedpolyhedron} Let $V$ be a vector space with lattice $\lattice$. Fix a  Euclidean scalar product $Q$ on $V$.
  Let $\p \subseteq V$ be a semi-rational polyhedron of dimension $\ell$. For any test function  $h(x)$ on $V$, the following asymptotic expansion  with  step-polynomial coefficients holds when  $t\to +\infty$ ($t\in \R$),
\begin{multline}\label{eq:asymptotic-semi-rational}
\frac{1}{t^\ell}\sum_{x\in t\p\cap \lattice}h(\frac{x}{t})\sim \\
\int_\p h(x)dx +\sum_{k\geq 1}\frac{1}{t^k}\sum_{\stackrel{m\geq 0, \f\in \CF(\p)}{ m+\ell-\dim\f=k} }
\int_\f (\mu(t\t(\p,\f))_{[m]}(\frac{\partial}{\partial x}))\cdot h)(x)dm_\f(x).
\end{multline}
Moreover, the differential operators which appear in (\ref{eq:asymptotic-semi-rational}) are unique in the sense that, if $\rho_m(\p,\f,t)(\xi)$ is a family of polynomials on $V^*$ with step-polynomial coefficients of $t\in \R$, such that (\ref{eq:asymptotic-semi-rational})  holds for any test function $h(x)$ and such that $\rho_m(\p,\f,t)(\xi)$  is homogeneous of degree $m$  and depends only on the $Q$-projection of $\xi$ on $(\lin(\f))^\perp$, then
$$
\rho_m(\p,\f,t)(\xi)= \mu_{[m]}(t\t(\p,\f))(\xi).
$$
\end{theorem}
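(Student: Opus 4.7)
The plan is to follow, step by step, the proof of Theorem \ref{th:main}, systematically replacing Theorem \ref{th:asymptotic-cone} by its semi-rational analogue Theorem \ref{th:asymptotic-cone-real} and the $\mu$-function of a lattice-vertex cone by that of a semi-rational affine cone, whose key properties (lattice-translation invariance, step-polynomial coefficients under dilation) are recorded in Proposition \ref{prop:mu-semi-rational-cone}.

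First, I would establish the analogue of Theorem \ref{cor:affine-lattice-cone} for a pointed semi-rational affine cone $\a=s+\c$ and a real parameter $t$. The local Euler-Maclaurin decomposition of Theorem \ref{th:local-EML-cone} extends to $S(ts+\c)$ in the form
\begin{equation*}
S(ts+\c)(\xi) \;=\; \sum_\f \mu_Q(ts+\t(\c,\f))(\xi)\, I(ts+\f)(\xi),
\end{equation*}
with the same proof; in the renormalization formulation of the appendix the argument is purely algebraic and carries over to real vertices without change. By Lemma \ref{lemma:S(s+c)with-step-polys} the homogeneous components are polynomials in $\xi$ with step-polynomial coefficients in $ts$. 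Taking inverse Fourier transforms of the boundary values supplied by Theorem \ref{th:asymptotic-cone-real} and Proposition \ref{prop:fourier-transform}, I obtain the cone expansion with face-indexed differential operators $\mu(t\t(\a,\f))_{[m]}(\partial/\partial x)$, whose coefficients are step-polynomial in $t$ by Proposition \ref{prop:mu-semi-rational-cone}(iii). For a non-pointed semi-rational affine cone, the lineality space is still a rational subspace (semi-rationality makes it an intersection of rational hyperplanes), so Lemma \ref{lemma:averaging} reduces the non-pointed case to the pointed one verbatim.

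Next, I would transfer the cone result to the polyhedron by the same partition-of-unity argument as in Theorem \ref{th:main}: every point of $\p$ has a small convex open neighborhood $U$ on which the Riemann sum for $\p$ coincides with the Riemann sum for a single supporting cone $\suppcone(\p,\q)$, which is semi-rational. The bijection between faces of $\suppcone(\p,\q)$ and faces $\d$ of $\p$ meeting $U$, together with the identification $\t(\suppcone(\p,\q),\f)=\t(\p,\d)$, reshuffles the cone expansion into the face-indexed sum \eqref{eq:asymptotic-semi-rational}.

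Uniqueness is obtained by the same decreasing induction on $\dim\f$ as in Theorem \ref{th:main}: after subtracting the contributions of higher-dimensional faces, one can shrink the support of $h$ to isolate a single face; the requirement that $\rho_m(\p,\f,t)(\xi)$ depend only on the $Q$-projection of $\xi$ onto $(\lin(\f))^\perp$ then pins it down. The only genuinely new ingredient beyond the lattice case is the uniqueness of asymptotic expansions with step-polynomial coefficients, established in Section \ref{section:step-poly-asymptotic}; this is also the main obstacle, as it rests on the Kronecker-type density argument needed to rule out nonzero step-polynomials that are $O(1/t)$.
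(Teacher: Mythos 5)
Your proposal matches the paper's intended argument: the paper itself only sketches this proof, stating that it is parallel to the lattice case of Theorem \ref{th:main} with Theorem \ref{th:asymptotic-cone-real}, the semi-rational $\mu$-function properties of Proposition \ref{prop:mu-semi-rational-cone}, and the uniqueness of asymptotic expansions with step-polynomial coefficients as the new ingredients, which is exactly the route you take. Your filling-in of the details (semi-rational local Euler--Maclaurin decomposition via renormalization, reduction of the non-pointed case by Lemma \ref{lemma:averaging}, partition of unity over supporting cones, and uniqueness by decreasing induction on $\dim\f$) is consistent and correct.
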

\begin{example}\label{ex:triangle-t-reel}
For the triangle $\p$ with vertices $(0,0)$, $(1,0)$,$(0,1)$, the asymptotic expansion with step-polynomial coefficients, for  $t\in \R$, is
\begin{multline*}
\frac{1}{t^2}
\sum_{(x_1,x_2)\in t\p\cap  \Z^2}h(\frac{x_1}{t},\frac{x_2}{t})\sim
\int_\p h +\frac{1}{t}\int_{\partial(\p)}(\frac{1}{2}-\{t\})h\; + \\
\frac{1}{t^2}\left(
-\frac{1}{12}(\int_{\f_1}\partial_{x_2}h + \int_{\f_2}\partial_{x_1}h)  +
(-\frac{1}{12}+\frac{\{t\}}{2}-\frac{\{t\}^2}{2})\int_{\f_3}
(-\frac{\partial_{x_1} + \partial_{x_2} }{2})\cdot h\right )\\ +\frac{1}{4}h(0,0)+\frac{3}{8}h(1,0)+\frac{3}{8}h(0,1))+ \cdots
\end{multline*}
When we restrict this to $t\in \N$, we recover the formula with constant coefficients of Example \ref{ex:local-behavior}.
\end{example}

\section{Appendix. Renormalization.}\label{section:appendix}
\subsection{Decomposition of the space of rational functions with poles in a  finite set of hyperplanes}
In this section which is elementary linear algebra, the base field need not be $\R$.

Let $\Delta=\{v_1,\dots, v_N\}$ be a finite set of  non zero elements of $V$. We assume that $\Delta$  does not contain collinear vectors. We denote by $\CR_\Delta$ the algebra of rational functions on $V^*$ of the form
$$
\frac{P(\xi)}{\prod_{j} \langle v_j,\xi \rangle^{n_j}}
$$
where $P(\xi)$ is a polynomial function on $V^*$ and $n_j\geq 0$.
We will denote this fraction simply by $\frac{P}{\prod_{j}  v_j^{n_j}}$. Its poles are contained in the union of the hyperplanes $\langle v_j,\xi \rangle=0$.

All statements and proofs of this appendix are quite easy when the   $v_j$'s are linearly independent.

\begin{definition}
If $L$ is a subspace of $V$, we denote by $G_{\Delta\cap L}$ the subspace of
$\CR_\Delta$ spanned by the fractions
$$
\frac{1}{\prod_{j\in J} v_j^{n_j}}
$$
 with $n_j>0$, $v_j\in L$, and  $\{v_j, j\in J\}$  span $L$.

 We denote by $B_{\Delta\cap L}$ the subspace of $G_{\Delta\cap L}$ spanned by  the fractions
 $\frac{1}{\prod_{j\in J} v_j}$  where $(v_j, j\in J)$ is a basis of $L$. Thus $B_{\Delta\cap L}$ is the homogeneous summand  of degree $-\dim L$ of $G_{\Delta\cap L}$. For $L=V$, we denote these spaces simply by $G_\Delta$ and $B_\Delta$.
\end{definition}
\begin{definition}
A \emph{complement map} is a map  $L\mapsto C(L)$ which associates
to any subspace $L\subset V$ a complementary subspace $C(L)$, so
that $V=L\oplus C(L)$.
\end{definition}
For example, a scalar product $Q$ defines a complement map.

We recall that the symmetric algebra $\Sym(V)$ is canonically identified with the algebra of polynomial functions on $V^*$ and the symmetric algebra of the dual, $\Sym(V^*)$, is canonically identified with the algebra of constant coefficients differential operators on $V^*$ (for clarity, we will denote by $\partial_\gamma$ the differentiation with respect to $\gamma\in V^*$). Given a complement  map,  if $L$ is a subspace of
$V$, we can consider $\Sym(C(L))$ as a subspace of $\Sym(V)$.
\begin{theorem}\label{th:structure}
 Let $C$ be a complement map on the set of subspaces of $V$.  We
 have a direct sum  decomposition
$$
\CR_\Delta=\oplus_{L\subseteq V } \Sym(C(L)) \otimes G_{\Delta\cap L}.
$$
\end{theorem}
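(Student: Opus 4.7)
\emph{Plan.} The theorem asserts two things: (a) the listed subspaces span $\CR_\Delta$, and (b) their sum is direct. I would prove these in turn; spanning is straightforward, directness is the main obstacle.

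\emph{Spanning.} I would induct on the pole order $\sum_j n_j$ of a fraction $f = P/\prod_j v_j^{n_j}$. The base case $\sum_j n_j = 0$ gives $f = P \in \Sym(V) = \Sym(C(\{0\}))$, which sits in the $L=\{0\}$ summand since $G_\emptyset$ is the base field. For the inductive step, set $L = \mathrm{span}\{v_j : n_j > 0\}$. The tensor identity $\Sym(V) = \Sym(L)\otimes\Sym(C(L))$ combined with $\Sym(L) = k \oplus L\cdot\Sym(L)$ yields the vector space decomposition $\Sym(V) = \Sym(C(L)) \oplus L\cdot\Sym(V)$. Since the subset $\{v_j : n_j > 0\}$ already spans $L$, one has $L\cdot\Sym(V) = \sum_{n_j>0} v_j\,\Sym(V)$, so $P = Q + \sum_{n_j>0} v_j R_j$ with $Q \in \Sym(C(L))$. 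Therefore
\[
f = \frac{Q}{\prod_j v_j^{n_j}} + \sum_{n_j>0} \frac{R_j}{v_j^{n_j-1}\prod_{k\neq j} v_k^{n_k}},
\]
where the first piece already lies in $\Sym(C(L)) \otimes G_{\Delta\cap L}$ and each remaining piece has strictly smaller total pole order; the inductive hypothesis finishes the job.

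\emph{Directness.} This will be the hard part. I plan to induct on $|\Delta|$. Fix $v \in \Delta$ and choose a basis of $V$ with first vector $v$; then any $g \in \CR_\Delta$ has a unique expansion $g = g_{\mathrm{reg}} + \sum_{m\geq 1} v^{-m} h_m$, with $g_{\mathrm{reg}}, h_m \in \CR_{\Delta\setminus\{v\}}[v]$. A summand $p_L g_L \in \Sym(C(L)) \otimes G_{\Delta\cap L}$ with $v \notin L$ is polynomial in $v$ and contributes only to $g_{\mathrm{reg}}$, while one with $v \in L$ may contribute to both parts. From a hypothetical relation $\sum_L f_L = 0$, equating the strictly polar part in $v$ produces a relation involving only those $L$ containing $v$; extracting each Laurent coefficient $h_m$ and invoking the partial-residue structure along $v=0$ gives a relation in $\CR_{\Delta\setminus\{v\}}$, to which the inductive hypothesis applies (using the restriction of $C$ to subspaces spanned by $\Delta\setminus\{v\}$, which makes sense since $C(L)$ depends only on $L$). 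A parallel argument on the regular part disposes of the summands with $v \notin L$.

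The subtlety I expect in the directness step is verifying that residues at $v=0$ interact well with the $\Sym(C(L))$-factors: one must check that for $v \in L$, the polar coefficients of an element of $\Sym(C(L))\otimes G_{\Delta\cap L}$ land in a summand of the theorem's decomposition for the smaller configuration $\Delta\setminus\{v\}$, corresponding to the quotient subspace $L/kv$ inside $V/kv$ (or, equivalently, the slice of $L$ transverse to $v$). Once this bookkeeping is pinned down, the induction goes through cleanly.
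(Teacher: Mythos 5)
Your spanning argument is fine, and the polar-part half of your directness plan is a sound idea (it is close to the residue induction the paper itself uses for its Lemma \ref{lemma:arrangementsI}); the gaps are in how you set up the induction and in how you propose to finish. First, the expansion along $v$ is mis-stated: for $g=\frac{1}{v_1v_2}$ with $v=v_1$, the polar part at $v=0$ is $\frac{1}{v}\cdot\frac{1}{\bar v_2}$, where $\bar v_2=v_2|_{v^\perp}$, and the regular part is $-\frac{1}{(v_2-v_1)v_2}$; so the coefficients $h_m$ do \emph{not} lie in $\CR_{\Delta\setminus\{v\}}[v]$ but in $\CR_{\Delta_0}$ for the \emph{projected} configuration $\Delta_0\subset V_0=V/\R v$ (this is exactly the operation $\Res_v\phi=(v\phi)|_{v^\perp}$), and the regular part even leaves $\CR_\Delta$. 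Hence the smaller instance you must invoke is the theorem on $V_0$ for $\Delta_0$ with the induced complement map $\bar L=L/\R v\mapsto \proj_{V_0}(C(L))$ (well defined and complementary because $L\mapsto\bar L$ is a bijection on subspaces containing $v$), not ``$\Delta\setminus\{v\}$ in $V$ with the restriction of $C$''. Second, and more seriously, the ``parallel argument on the regular part'' fails: the regular part of the relation receives contributions from \emph{all} $L$, including those with $v\in L$, and for $v\notin L$ the Taylor coefficients of $f_L$ along $v^\perp$ do not land in a single summand of any smaller instance (when $v\notin L$, $\proj_{V_0}(C(L))$ is not complementary to $\bar L$), so the induction hypothesis cannot be applied there. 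As written you therefore have no mechanism to kill the summands with $v\notin L$, in particular the polynomial summand $L=\{0\}$, nor to pass from ``polar part along $v$ vanishes'' to $f_L=0$.

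The repair is not hard but is missing: one does check (as you anticipate) that for $v\in L$ the polar coefficients of $f_L$ lie in $\Sym(\bar C(\bar L))\otimes G_{\Delta_0\cap\bar L}$, because constant-coefficient derivatives preserve $\Sym(C(L))$ and restriction to $v^\perp$ maps $\Sym(C(L))$ onto $\Sym(\bar C(\bar L))$ and the relevant fractions into $G_{\Delta_0\cap\bar L}$; then run this for \emph{every} $v\in\Delta$, concluding that each $f_L$ has no pole along any $v\in\Delta\cap L$, hence is a polynomial; finally one needs the separate observation that a polynomial element of $\Sym(C(L))\otimes G_{\Delta\cap L}$ with $L\neq\{0\}$ is zero (write it as $\sum_i p_i\otimes u_i$ with the $p_i$ linearly independent; each $u_i$ is then forced to be a polynomial, while $G_{\Delta\cap L}$ is spanned by homogeneous fractions of degree $\leq-\dim L<0$). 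With these corrections your route does work and is genuinely different from the paper's: the paper proves $\CR_\Delta=G_\Delta\oplus N_\Delta$ together with torsion/freeness over $\Sym(V^*)$, and then obtains directness by a second induction on $\codim L$, annihilating the other summands with high powers of derivatives $\partial_\gamma^N$ and using freeness; your iterated-residue scheme would bypass that module theory, but as submitted the decisive steps above are absent or incorrect.
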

Of course, the sum runs only over the set of subspaces which are spanned by elements of $\Delta$. The summand corresponding to $L=\{0\}$ is the space of polynomials $\Sym(V)$.
\begin{remark}
 If the base field is   $\R$, and  $\Delta$ is contained in an open half-space $\lambda> 0$, (this is realized by replacing $v_j$ by $-v_j$ if necessary),  then for  $R\in \CR_\Delta$, the algebraic decomposition of Theorem \ref{th:structure} can be translated into a decomposition of the inverse Fourier transform $\CF^{-1}(\lim\limits_\lambda R)$ into a sum of distributions each  supported on a subspace $L$. Thus the uniqueness part of the theorem is easy to prove, as in the proof of Theorem \ref{th:main}.
\end{remark}
\begin{example}
    If $\dim V =1$,  then $\Delta$ consists of just one element say $\langle v_1,\xi\rangle=\xi$,   there are two subspaces $L=V$ and $L=\{0\}$. Then $\CR_\Delta$ consists of fractions
    $\frac{f(\xi)}{\xi^n}$. Such a fraction can be written in a unique way as $\frac{a_n}{\xi^n}+ \cdots + \frac{a_1}{\xi}+ g(\xi)$, where $a_j$ are constants and $g(\xi)$ is a polynomial.
\end{example}
\begin{proof}
The result is certainly not new. It is implicit in \cite{MR2722776} and \cite{MR1710758}.  For completeness, we will give a full proof,  to begin with, the following elementary lemma.
\begin{lemma}\label{lemma:simple-fractions}
 $G_{\Delta\cap L}$ is spanned by fractions  of the form
 \begin{equation}\label{eq:GL}
\frac{1}{\prod_{j\in J}  v_j^{n_j}} \mbox{ where  } (v_j, j\in J) \mbox{  is a basis of } L.
\end{equation}
\end{lemma}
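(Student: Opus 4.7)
The plan is to induct on $|J|$, reducing a fraction with $|J|>\dim L$ to fractions with fewer denominator factors (still spanning $L$). The base case $|J|=\dim L$ is immediate, since a spanning set of size $\dim L$ is a basis.

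For the inductive step, the family $(v_j)_{j\in J}$ is linearly dependent, so I would pick a minimal dependent subset $K\subseteq J$ with relation $\sum_{k\in K}a_k v_k=0$, all $a_k\neq 0$. Fix any $k^*\in K$ and multiply this vanishing relation by the auxiliary rational function $\frac{1}{v_{k^*}^{n_{k^*}+1}\prod_{j\in J\setminus\{k^*\}}v_j^{n_j}}$. The $k^*$ summand telescopes to $a_{k^*}R$, where $R=\frac{1}{\prod_{j\in J}v_j^{n_j}}$, so the resulting identity reads
$$R \;=\; -\frac{1}{a_{k^*}}\sum_{k\in K\setminus\{k^*\}} a_k\,T_k, \qquad T_k := \frac{1}{v_{k^*}^{n_{k^*}+1}\,v_k^{n_k-1}\prod_{j\in J\setminus\{k,k^*\}}v_j^{n_j}}.$$

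Each $T_k$ falls in one of two cases. If $n_k=1$, then $v_k$ drops out and $T_k$ is supported on $J\setminus\{k\}$; the minimality of $K$ guarantees $v_k\in \mathrm{span}\{v_{k'}:k'\in K\setminus\{k\}\}\subseteq \mathrm{span}\{v_j:j\in J\setminus\{k\}\}$, so the reduced support still spans $L$ and the outer induction on $|J|$ applies. If $n_k\geq 2$, the support is still $J$, but the quantity $\phi(F):=\sum_{k'\in K\setminus\{k^*\}}(\text{exponent of }v_{k'}\text{ in }F)$ has decreased by exactly $1$ in passing from $R$ to $T_k$. A nested induction on $\phi$ handles this case; its base is $\phi=|K|-1$, which is exactly the situation where every $n_{k'}$ with $k'\in K\setminus\{k^*\}$ equals $1$, forcing every $T_k$ into the first case.

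The main obstacle I anticipate is locating the correct auxiliary fraction: the $+1$ boost on the exponent of $v_{k^*}$ is precisely what makes the $k^*$ term reproduce $R$ itself, closing the induction loop. Everything else amounts to bookkeeping with exponents and supports, and the two-level induction on the pair $(|J|,\phi)$ terminates cleanly because $|K|-1$ provides a sharp lower bound for $\phi$.
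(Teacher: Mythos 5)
Your proof is correct and rests on the same key manipulation as the paper's: multiply a linear relation among the denominator vectors by the fraction with one exponent boosted by one, so that the distinguished term reproduces the original fraction while every other term either loses a denominator factor (with the span of the support preserved) or has a strictly smaller exponent count. The paper packages this as stability of the span of basis-form fractions under multiplication by $\frac{1}{v^n}$, expanding the extra vector $v$ in the chosen basis and inducting on $\sum_j n_j$, whereas you run a direct double induction on $(|J|,\phi)$ with a minimal dependent subset $K$ and a boosted $v_{k^*}$ — a difference of bookkeeping only.
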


 For example, $\frac{1}{(v_1+v_2)v_1 v_2 }=\frac{1}{(v_1+v_2)^2 v_2}+\frac{1}{(v_1+v_2)^2 v_1}$.

\begin{proof}
It is enough to prove that the space spanned by fractions of the form \ref{eq:GL} is stable by multiplication by $\frac{1}{v^n}$ where $v\in \Delta\cap L$.

We decompose $v=\sum_{j\in J} c_j v_j$ in the basis $(v_j, j\in J)$ of $L$. Then
\begin{multline*}
\frac{1}{v^n \prod_{j\in J} v_j^{n_j}}= \frac{v}{v^{n+1} \prod_{j\in J} v_j^{n_j}}
=\sum_{j\in J} c_j \frac{1}{v^{n+1} v_j^{n_j-1}\prod_{k\in J,k\neq j} v_k^{n_k}}.
\end{multline*}
Pick  $j$  such that $c_j\neq 0$. If $n_j=1$, the $j$ term is of the form (\ref{eq:GL}), with $v_j$ replaced by $v$ in the basis $(v_j, j\in J)$ of $L$.
If $n_j>1$, we repeat the procedure on the $j$ term, with $v^n$ replaced by $v^{n+1}$, so the assertion follows by induction on $\sum_j n_j$.
\end{proof}

The equality
$\CR_\Delta=\sum _{L\subseteq V } \Sym(C(L)) \otimes G_{\Delta\cap L}$
follows easily from this lemma,   by using the decomposition
$\Sym(V)=\Sym(C(L))\otimes S(L)$. There remains to prove that the sum is direct. The main step is \cite{MR1710758}, Theorem 1, which we recall in the following  lemma.

We introduce the  subspace $N_\Delta\subset \CR_\Delta$ spanned by  fractions $\frac{P}{\prod_{j\in J}  v_j^{n_j}}$ where $(v_j, j\in J)$ \textbf{do not}  span $V$. Thus
$$N_\Delta=\sum_{L\subsetneqq V } \Sym(C(L)) \otimes G_{\Delta\cap L}.$$

\begin{lemma}\label{lemma:arrangementsI}
Assume that $\Delta$ spans $V$.  Then

\noindent(i)  $\CR_\Delta=  G_\Delta\oplus N_\Delta$.

\noindent(ii) $ N_\Delta$ is the torsion $\Sym(V^*)  $ submodule of $\CR_\Delta$.

\noindent(iii) $G_\Delta$ is a free $\Sym(V^*)  $ module.
\end{lemma}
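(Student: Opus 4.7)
My plan is to prove the three parts in the order (spanning half of (i)) $\to$ (one inclusion of (ii)) $\to$ (iii, torsion-freeness of $G_\Delta$) $\to$ close the loop for (ii) and directness of (i). The heart of the argument is proving that $G_\Delta$ has no nonzero torsion under the differential action of $\Sym(V^*)$; once this is established, (ii) and the directness half of (i) fall out formally.

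For the spanning statement $\CR_\Delta = G_\Delta + N_\Delta$, I would start with an element $f = P/\prod_j v_j^{n_j} \in \CR_\Delta$, set $L$ to be the linear span of the $v_j$'s appearing, and use the tensor decomposition $\Sym(V) = \Sym(L)\otimes \Sym(C(L))$ to write $P = \sum_\alpha R_\alpha Q_\alpha$ with $R_\alpha \in \Sym(C(L))$ and $Q_\alpha \in \Sym(L)$. Each $Q_\alpha/\prod_j v_j^{n_j}$ lies in the localization of $\Sym(L)$, and by the argument of Lemma \ref{lemma:simple-fractions} plus a straightforward division by the $v_j$'s inside $\Sym(L)$, it can be rewritten as an element of $G_{\Delta\cap L}$ plus a polynomial (which belongs to the $L'=\{0\}$ piece of $N_\Delta$, namely $\Sym(V)$). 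Thus $f$ lies in $\Sym(C(L))\cdot G_{\Delta\cap L} + \Sym(V)$; if $L=V$ this gives a contribution in $G_\Delta + N_\Delta$, and if $L\subsetneq V$ it is entirely in $N_\Delta$. Next, to see that every generator of $N_\Delta$ is torsion: take $Pg \in \Sym(C(L))\otimes G_{\Delta\cap L}$ with $L\subsetneq V$, choose any nonzero $\gamma \in L^\circ$ (the annihilator of $L$ in $V^*$, nontrivial since $\dim L < \dim V$), and compute $\partial_\gamma g = -\sum_j n_j\langle v_j,\gamma\rangle/(v_j\prod_k v_k^{n_k})=0$ because $\langle v_j,\gamma\rangle=0$ for all $v_j \in L$. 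By the Leibniz rule, $\partial_\gamma^N(Pg) = (\partial_\gamma^N P)\, g = 0$ as soon as $N > \deg P$, so $Pg$ is torsion.

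The main obstacle is (iii), and it is where the argument is genuinely nontrivial. My plan is to prove torsion-freeness of $G_\Delta$ via a Laplace/Fourier transform realization (the freeness statement then follows by choosing a no-broken-circuit basis and comparing dimensions graded piece by graded piece, or can be left as the direct consequence of the structure theorem). Assume $Df=0$ for some nonzero $D\in\Sym(V^*)$ and nonzero $f\in G_\Delta$; WLOG both are homogeneous. Replacing each $v_j$ by $-v_j$ if necessary (which preserves $\CR_\Delta$, $G_\Delta$, $N_\Delta$), arrange that $\Delta$ lies in an open half-space $\{\lambda>0\}$. Then every generator $1/\prod_{j\in J}v_j$ of $G_\Delta$ (with $(v_j)_{j\in J}$ a basis of $V$) is, up to the normalization $|\det(v_j)|$, the Laplace transform of the characteristic function of the simplicial cone $\sum_j \R_{\geq 0} v_j$ with respect to the Lebesgue measure. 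Under this transform the action of $\partial_\gamma$ corresponds to multiplication by the linear function $-\langle\gamma,x\rangle$ on $V$, hence $D\in\Sym(V^*)$ acts as multiplication by a nonzero polynomial $\widetilde D(x)$. The inverse-Laplace realization $\widetilde f$ of $f$ is a nonzero piecewise-polynomial distribution supported in the positive cone spanned by $\Delta$, and in particular is nonzero on some open set; multiplication by $\widetilde D$ vanishes on that open set only if $\widetilde D\equiv 0$, contradicting $D\neq 0$.

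Once torsion-freeness of $G_\Delta$ is established, the remaining claims follow cleanly. For (ii), if $f\in\CR_\Delta$ is torsion, decompose $f = g+h$ with $g\in G_\Delta$, $h\in N_\Delta$ using the spanning half of (i); since $N_\Delta$ is torsion by the previous paragraph, $g = f - h$ is also torsion, and hence $g=0$ by torsion-freeness of $G_\Delta$, so $f=h\in N_\Delta$. For the directness half of (i), any $f\in G_\Delta\cap N_\Delta$ is torsion (being in $N_\Delta$) and lies in $G_\Delta$ (torsion-free), so $f=0$. This simultaneously completes (i), (ii), and reduces (iii) to the torsion-freeness we have proved.
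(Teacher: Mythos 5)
Your treatment of the spanning identity $\CR_\Delta=G_\Delta+N_\Delta$, of the fact that $N_\Delta$ is torsion, and your derivation of (i) and (ii) from torsion-freeness of $G_\Delta$ are all sound, and your Laplace-transform argument for torsion-freeness is correct over $\R$ (it is essentially the analytic shortcut the paper itself points out in the Remark following Theorem \ref{th:structure}, and it is the only genuinely nontrivial step). However, there is a real gap at statement (iii): what you prove is that $G_\Delta$ is a \emph{torsion-free} $\Sym(V^*)$-module, while (iii) asserts it is \emph{free}, and over a polynomial ring in $\geq 2$ variables torsion-free is strictly weaker than free (e.g.\ the maximal ideal at the origin). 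Neither of your two suggested upgrades is carried out: ``choosing a no-broken-circuit basis and comparing dimensions graded piece by piece'' presupposes both that the NBC fractions span $G_\Delta$ over $\Sym(V^*)$ and a computation of the Hilbert series, which is essentially the content of the Brion--Vergne theorem this lemma is reproducing, and freeness is not a ``direct consequence of the structure theorem'' (the structure theorem gives a direct sum of subspaces, not a module basis). The paper closes exactly this gap with its commutator induction: writing $\phi=\sum_j D_j\cdot\phi_j$ for a linear basis $\phi_1,\dots,\phi_s$ of $B_\Delta$ and bracketing with elements $v\in V$ to lower the degree of the $D_j$, it shows that $\sum_j D_j\cdot\phi_j\in N_\Delta$ forces all $D_j=0$; this simultaneously yields $G_\Delta\cap N_\Delta=0$ and that $(\phi_1,\dots,\phi_s)$ is a free module basis, i.e.\ (i), (ii) and (iii) in one stroke. (Note that for the application inside the proof of Theorem \ref{th:structure} torsion-freeness would in fact suffice, but the lemma as stated claims freeness, so your proof is incomplete as it stands.)

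A second, lesser issue is the field of definition. The appendix announces that the base field need not be $\R$, and the paper's proof of Lemma \ref{lemma:arrangementsI} is purely algebraic (residues, torsion, commutators), valid over any field of characteristic zero; your argument, by contrast, relies on boundary values, Fourier/Laplace transforms and sets of positive measure, so it only establishes the lemma over $\R$ (and subfields). That restriction is harmless for the applications in the body of the paper, but it does not prove the statement in the generality in which it is asserted. If you want to keep your analytic route, you should say explicitly that you are proving the real case, and you should still supply a genuine argument for freeness, for instance the paper's commutator step or a reduction to the cited result of Brion--Vergne.
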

\begin{proof}
Let us prove that $ N_\Delta$ is a torsion $\Sym(V^*)  $-module. Let $\phi=\frac{P}{\prod_{j\in J}  v_j^{n_j}}$, where $(v_j, j\in J)$ \textbf{do not}  span $V$.
There exists  $\gamma\neq 0$ such that $\langle\gamma,v_j\rangle=0$ for every $j\in J$. For $N$ large enough,   $\partial_{\gamma}^{N}\cdot \phi=\frac{\partial_{\gamma}^{N}\cdot P}{\prod_{j\in J}  v_j^{n_j}}=0$.

We prove the lemma by induction over $\dim V$.
We consider the subspace $B_\Delta$  of $G_\Delta$  spanned by fractions of the form
$$
\frac{1}{\prod_{j\in B}v_j}
$$
where $(v_j, j\in B)$ is a basis of $V$. It follows from Lemma \ref{lemma:simple-fractions} that  $G_\Delta$ is generated as $\Sym(V^*)$-module by  $B_\Delta$.

Let us first prove that $B_\Delta\cap N_\Delta=0$. We use the notion of partial residue. If $\phi\in B_\Delta$, the poles of $\phi$ are simple and contained in $\Delta$. Let us fix an element of $\Delta$, say $v_1$.  Let $V_0 =V/\R v_1$, thus $V_0^*= v_1^\perp$. Let $\Delta_0$ be the set of projections of the other elements of $\Delta$  on $V_0$. Thus $\CR_{\Delta_0}$  consists of functions on $V_0^*$.

\begin{definition}
If $v_1$ is a simple pole  of $\phi\in \CR_\Delta$, the residue $\Res_{v_1}\phi$ is the element of $\CR_{\Delta_0}$ defined by
$$
\Res_{v_1}\phi= (v_1\phi)|_{v_1^\perp}.
$$
\end{definition}
Let $\phi\in B_\Delta\cap N_\Delta$. It is clear that $\Res_{v_1}\phi \in B_{\Delta_0}$.
Let us show  that $\Res_{v_1}\phi=0$.
We write
$\phi=\sum_L g_L$ with $g_L\in \Sym(C(L)) \otimes G_{\Delta\cap L}$. Let $(L_a, a\in A)$ be the  subspaces which contains $v_1$. For each $a\in A$,  we pick $\gamma_a\in L_a^\perp$, $\gamma_a\neq 0$.  For $N$ large enough,   $\partial_{\gamma_a}^{N}\cdot g_{L_a}=0$. Let $D =\prod_{a\in A}\partial_{\gamma_a}^{N}$. Then $D\cdot g_{L_a}=0$ for every $a\in A$.

If $v_1\notin L$, then  $v_1$ is not a pole of $g_L$,  hence not a pole of $D\cdot g_L$ either. Therefore $v_1$ is not a pole of $D\cdot \phi$, so $\Res_{v_1}( D \cdot \phi)=0$.  The elements $\gamma_a$ are in $v_1^\perp=V_0^*$, so that $\prod_a {\gamma_a}^{N}\in \Sym(V_0^*)$, and
$$
D \cdot \Res_{v_1}\phi= \Res_{v_1}( D \cdot \phi)=0.
$$
Thus $\Res_{v_1}\phi$ lies in the $\Sym(V_0^*)$-torsion module of $\CR_{\Delta_0}$. As  $\Res_{v_1}\phi \in B_{\Delta_0}$, we have $\Res_{v_1}\phi=0$ by the induction hypothesis. Thus $v_1$ is not a pole of $\phi$. Since $v_1$ was any element of $\Delta$, $\phi$ has no poles, so $\phi=0$.

Next, let us prove $G_\Delta\cap N_\Delta= \{0\}$.
    Let $\phi_1, \ldots, \phi_s$ be a basis of $B_\Delta$ (over the base field). Then any fraction $\phi\in G_\Delta$ can be written as
   $$
   \phi=\sum_{j=1}^s D_j \cdot \phi_j
   $$
for some $D_j$'s in $\Sym(V^*)$.  Assume $\phi\in N_\Delta$.  We want to prove that $D_j=0$ for every $j$.   We can assume that  $\phi $ is homogeneous, so $D_j$'s are homogeneous with the same  degree $k>0$ (otherwise, $\phi\in B_\Delta\cap N_\Delta$, so $\phi=0$). Assume  $D_1\neq 0$.  There exists $v\in V$ such that the Lie bracket $[v,D_1]=v\circ D_1-D_1\circ v$ is not  zero.  The brackets  $[v,D_j]$ are elements of $\Sym(V^*)$ of degree $k-1$.
We have
$ v\phi\in N_\Delta$ and $\sum_{j\in J} D_j\cdot v \phi_j\in  N_\Delta$, hence
$$
\sum_{j\in J}[v,D_j] \cdot \phi_j\in G_\Delta\cap N_\Delta.
$$
Iterating, we obtain a linear relation $\sum_{j\in J} c_j \phi_j\in N_\Delta$ with  $c_1\neq 0$, which contradicts the fact that $B_\Delta\cap N_\Delta=0$.
We have proved
$G_\Delta\cap N_\Delta= \{0\}$, hence  (i). We have also proved that $(\phi_1, \ldots, \phi_s)$ is  a basis of $G_\Delta$ as $\Sym(V^*)$-module, hence (iii), hence also (ii).
\end{proof}

We will now prove that $\Sym(C(L)) \otimes G_{\Delta\cap L}$ is a direct summand by induction on $\codim L$. The case $\codim L=0$ is Lemma \ref{lemma:arrangementsI}, (i). So, let us assume that $\Sym(C(L)) \otimes G_{\Delta\cap L}$ is a direct summand for all subspaces $L$ (spanned by elements of $\Delta$) which  have codimension $\leq k$. Let $L_0, L_1,\ldots, L_s$ be  the list of subspaces (spanned by elements of $\Delta$) which  have codimension $ k+1$.
If $s=0$, then we must have $k+1=\dim V$, hence $L_0=\{0\}$, so we are done. So we assume that $s\geq 1$.

Assume that we have a family $f_L\in \Sym(C(L)) \otimes G_{\Delta\cap L}$ such that
$$
\sum_{L, \codim L \leq k+1}f_L=0.
$$
We are going to prove that $f_{L_0}=0$.  For $j=1,\ldots, s$,  we choose $\gamma_j\in L_j^\perp\subset V^*$ such that $\gamma_j\notin L_0^\perp$.
For $N$ large enough, $D=\prod_{j=1}^s \partial_{\gamma_j}^N$
kills all terms $f_L$ if $L$ is contained in some $L_j$ with $j=1,\ldots,s$. So $D \cdot f_{L_0}=0$ as well.
We decompose
$$
V^*= C(L_0)^\perp \oplus L_0^\perp.
$$
Then
$$
\Sym(V^*)= \Sym(C(L_0))^\perp)\otimes \Sym(L_0^\perp)=\oplus_{n}S_n(C(L_0)^\perp)\otimes \Sym(L_0^\perp).
$$
Let
$D=D_0+D_{> 0}$ with respect  to the degree on $\Sym(C(L_0))^\perp)$. Let us show that $D_0\neq 0$.
Let
$\gamma_j= \alpha_j + \beta_j$ in the decomposition $V^*= C(L_0)^\perp \oplus L_0^\perp$.
Then $\alpha_j\neq 0$, and  $D_0= \prod_{j=1}^s \alpha_j^N$  is not zero.

We decompose $f_{L_0}\in \Sym(C(L_0)))\otimes G_{L_0\cap \Delta}= \oplus_{n}S_n(C(L_0))\otimes G_{L_0\cap \Delta}$ with respect to the degree on $\Sym(C(L_0))$,
$$
f_{L_0}=f_n+f_{n-1}+\cdots.
$$
We observe that $D_{> 0}$ lowers strictly the degree with respect to  $\Sym(C(L_0))$, while $D_0$ acts only on the  $G_{L_0\cap \Delta} $ factor. Therefore we obtain
$$
D_0\cdot f_n=0.
$$
We deduce $f_n=0$ by applying Lemma \ref{lemma:arrangementsI}, (iii),  with $V$ replaced by  the subspace $L_0$, and $L_0^*$ identified with  $C(L_0)^\perp $. Thus, we have proved $f_{L_0}=0$.
\end{proof}

\subsection{Renormalization of meromorphic functions with hyperplane singularities}\label{subsection:renorm-meromorphic}
From now on, we will assume that the complement map is given by a scalar product $Q$ on $V$. We will denote it by $C_Q$.
\begin{definition}\label{def:renormQ}
The projection map on  the summand corresponding to $L=\{0\}$ in Theorem \ref{th:structure} is called the renormalization map with respect to $Q$ and is denoted by
$$
R_Q: \CR_\Delta\to \Sym(V)
$$
\end{definition}
\begin{remark}\label{remark:renormalization}

The renormalization map extends into a map from the space of meromorphic functions with hyperplane singularities to the space of holomorphic functions near $\xi=0$.
$$
R_Q: \CM_h(V^*) \to \CH(V^*)
$$
First, if $f$ is a  rational fraction with hyperplane singularities, we apply Theorem \ref{th:structure} with $\Delta$ any set containing the  singular hyperplanes. For a meromorphic function $f$, we renormalize the Taylor expansion term by term.
\end{remark}
\begin{example}\label{ex:renormalization} In dimension one, if $\phi(\xi)$ is holomorphic near $0$, we have $\frac{\phi(\xi)}{\xi}=\frac{\phi(0)}{\xi}+\frac{\phi(\xi)-\phi(0)}{\xi}$, hence
$R_Q(\frac{\phi(\xi)}{\xi})= \frac{\phi(\xi)-\phi(0)}{\xi}$.
For instance,
$$
R_Q(\frac{1}{1-\e^\xi})=\frac{1}{1-\e^\xi} + \frac{1}{\xi}.
$$

In dimension 2, when $Q$ is the standard scalar product,  the decomposition of Theorem \ref{th:structure} is
\begin{multline*}
\frac{\phi(\xi_1,\xi_2)}{\xi_1\xi_2}= \frac{\phi(0,0)}{\xi_1\xi_2} +
\frac{1}{\xi_2}(\frac{\phi(\xi_1,0)-\phi(0,0)}{\xi_1})+
\frac{1}{\xi_1}(\frac{\phi(0,\xi_2)-\phi(0,0)}{\xi_2})\\ +\frac{\phi(\xi_1,\xi_2)-\phi(\xi_1,0)-\phi(0,\xi_2)+\phi(0,0)}{\xi_1\xi_2},
\end{multline*}
hence
 $$
 R_Q(\frac{\phi(\xi_1,\xi_2)}{\xi_1\xi_2})=\frac{\phi(\xi_1,\xi_2)-\phi(\xi_1,0)-\phi(0,\xi_2)+\phi(0,0)}{\xi_1\xi_2}.
 $$
\end{example}
For a function which has only simple poles which are linearly independent, renormalization provides a nice recursive formula for computing all the terms in the decomposition of Theorem \ref{th:structure}. It is implemented in a Maple program \cite{RenormalizationMaple}.
\begin{proposition}\label{th:structure-simple-poles}
 Let $\Delta=\{v_1,\ldots, v_s\}$ be a set of linearly independent vectors in $V$. For $J\subseteq \Delta$, let $L_J$ be the subspace of V spanned by $(v_j,j\in J)$.
Let $P\in \Sym(V)$ be a polynomial. Then the decomposition of $\frac{P}{v_1 \cdots v_s}$ in Theorem \ref{th:structure} is given by the following formula
\begin{equation}\label{eq:structure-simple-poles1}
  \frac{P}{v_1 \cdots v_s}=\sum_{J\subseteq \Delta } R_Q \left(\frac{P}{\prod_{k\notin J}v_k}|_{L_J^{\perp}}\right) \, \frac{1}{\prod_{j\in J}v_j}.
\end{equation}
Here, $L_J^\perp\subseteq V^*$ is identified with $C_Q(L_J)^*$.
\end{proposition}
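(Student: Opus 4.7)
My plan is to derive the formula from the uniqueness of the decomposition in Theorem~\ref{th:structure}, in two stages. First, I will show that $\frac{P}{v_1 \cdots v_s}$ admits a \emph{simple-pole} decomposition, meaning that its $L_J$-component lies in the subspace $\Sym(C_Q(L_J)) \cdot \frac{1}{\prod_{j \in J} v_j}$; then I will recover each coefficient by a ``multiply, restrict, renormalize'' computation.

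For the first stage, I consider the subspace
\[
U := \sum_{J \subseteq \Delta} \Sym(C_Q(L_J)) \cdot \frac{1}{\prod_{j \in J} v_j} \subseteq \CR_\Delta
\]
and verify stability under multiplication by any polynomial in $\Sym(V)$. Since $\Sym(V)$ is generated by $C_Q(L_\Delta)$ together with $\{v_1, \ldots, v_s\}$, it suffices to check the generators. For $w \in C_Q(L_\Delta)$ stability is immediate because $C_Q(L_\Delta) \subseteq C_Q(L_J)$ for every $J$. Multiplication by $v_j$ with $j \in J$ simply cancels a denominator factor, and $g_J \in \Sym(C_Q(L_J))$ is automatically in the larger space $\Sym(C_Q(L_{J \setminus \{j\}}))$. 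For $v_j$ with $j \notin J$, I decompose $v_j = v_j^\parallel + v_j^\perp$ via the $Q$-orthogonal splitting $V = L_J \oplus C_Q(L_J)$: the $v_j^\perp$ part stays in $\Sym(C_Q(L_J))$, while $v_j^\parallel = \sum_{j' \in J} c_{j'} v_{j'}$ produces terms each of which cancels a denominator factor as in the previous case. Since $\frac{1}{v_1 \cdots v_s} \in U$ (with $J = \Delta$ and constant coefficient $1$), multiplication by $P$ gives $\frac{P}{v_1 \cdots v_s} \in U$, yielding a decomposition
\[
\frac{P}{v_1 \cdots v_s} = \sum_{J \subseteq \Delta} \frac{g_J}{\prod_{j \in J} v_j}, \qquad g_J \in \Sym(C_Q(L_J)),
\]
which is unique by Theorem~\ref{th:structure}.

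For the second stage, fix $J$ and multiply both sides by $\prod_{j \in J} v_j$, obtaining
\[
\frac{P}{\prod_{k \notin J} v_k} = g_J + \sum_{J' \neq J} g_{J'} \cdot \frac{\prod_{i \in J \setminus J'} v_i}{\prod_{i \in J' \setminus J} v_i}.
\]
On $L_J^\perp$ every $v_i$ with $i \in J$ vanishes, so terms with $J \not\subseteq J'$ drop out and only $J' \supseteq J$ survive, contributing $g_{J'}|_{L_J^\perp} \big/ \prod_{i \in J' \setminus J} v_i|_{L_J^\perp}$. I then apply $R_Q$ on $L_J^\perp$: the polynomial $g_J|_{L_J^\perp}$ is fixed, and I claim that for each $J' \supsetneq J$ the renormalization vanishes. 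The reason is that the surviving fraction lies in the summand $\Sym(C_Q(M)) \otimes G_M$ of the Theorem~\ref{th:structure} decomposition on $L_J^\perp$, where $M$ is the span in $C_Q(L_J)$ of the $Q$-projections of the $v_i$ with $i \in J' \setminus J$; since $M \neq \{0\}$, this summand is annihilated by $R_Q$. The key verification is $g_{J'}|_{L_J^\perp} \in \Sym(C_Q(M))$, which follows from the $Q$-orthogonality identity $C_Q(M) = C_Q(L_{J'})$ inside $C_Q(L_J)$ together with the hypothesis $g_{J'} \in \Sym(C_Q(L_{J'}))$. Combining these gives $g_J = R_Q\bigl(\frac{P}{\prod_{k \notin J} v_k}|_{L_J^\perp}\bigr)$, which is the formula.

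I expect the main obstacle to be the bookkeeping in the first stage, namely checking stability of $U$ under multiplication by each $v_j$, combined with the $Q$-orthogonality identity $C_Q(M) = C_Q(L_{J'})$ inside $C_Q(L_J)$ needed at the end of the second stage. Both are elementary but require attention to the interplay between $L_J$, $C_Q(L_J)$, and $L_J^\perp$ under the $Q$-identification $V \cong V^*$; once these are set up, the result is a direct consequence of the uniqueness in Theorem~\ref{th:structure}.
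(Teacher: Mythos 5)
Your proof is correct, and it reaches the formula by a genuinely different route than the paper. The paper argues by induction on $\dim V$: it \emph{asserts} the simple-pole form $\frac{P}{v_1\cdots v_s}=\sum_{J}\frac{P_J}{\prod_{j\in J}v_j}$ with $P_J\in\Sym(C_Q(L_J))$, then for $1\in J$ takes the residue $\Res_{v_1}$ of both sides; the terms with $1\notin J'$ drop out, the surviving identity is recognized as the analogous decomposition of $\frac{\bar P}{\bar v_2\cdots\bar v_s}$ on $v_1^\perp\cong C_Q(v_1)^*$, and the induction hypothesis (the same proposition in one dimension less) identifies $P_J$ with the claimed renormalization, the case $J=\emptyset$ being the definition of $R_Q$. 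You do two things differently: (i) you actually \emph{prove} the simple-pole form, via stability of $U=\sum_J \Sym(C_Q(L_J))\cdot\frac{1}{\prod_{j\in J}v_j}$ under multiplication by $\Sym(V)$ — a clean, self-contained substitute for a step the paper only asserts here (and sketches elsewhere, in Subsection \ref{subsec:localsimplicial} and in the proof of Theorem \ref{th:localEML}); and (ii) you extract each coefficient $g_J$ in one shot by multiplying by $\prod_{j\in J}v_j$ and restricting to $L_J^\perp$ (in effect an iterated residue performed all at once), then showing every surviving term with $J'\supsetneq J$ lies in a summand $\Sym(C_Q(M))\otimes G_{M}$ with $M\neq\{0\}$ of the Theorem \ref{th:structure} decomposition on $C_Q(L_J)^*$, hence is killed by $R_Q$; the key point, which you correctly isolate, is that the orthogonal complement of $M=\mathrm{span}(\bar v_i,\, i\in J'\setminus J)$ inside $C_Q(L_J)$ equals $C_Q(L_{J'})$, so $g_{J'}\in\Sym(C_Q(L_{J'}))$ restricts into $\Sym(C_Q(M))$. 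Your version avoids the dimension induction and residue bookkeeping at the price of this orthogonality identity and the summand-membership argument; the paper's version is shorter on the page because it recycles the lower-dimensional formula. Two small points you leave implicit and should state: for $i\notin J$ the restriction $v_i|_{L_J^\perp}$ is nonzero (immediate from linear independence, since $v_i\in L_J$ would force $i\in J$), so all restricted fractions are well defined and the terms with $J\not\subseteq J'$ genuinely vanish; and the projections $\bar v_k$, $k\notin J$, are themselves linearly independent in $C_Q(L_J)$, so Theorem \ref{th:structure} applies on $C_Q(L_J)^*$ with no collinearity issue when you invoke $R_Q$ there.
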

Example \ref{ex:renormalization} illustrates the proposition.
\begin{proof}
The proof is by induction on $\dim V$.
The decomposition of $\phi=\frac{P}{v_1 \cdots v_s}$ takes the form
\begin{equation}\label{eq:structure-simple-poles2}
\frac{P}{v_1 \cdots v_s}=\sum_{J\subseteq \Delta }\frac{P_J}{\prod_{j\in J}v_j}.
\end{equation}
with $P_J\in \Sym(C_Q(L_J))$.

 Let us prove  that  $P_J= R_Q \left(\frac{P}{\prod_{k\notin J}v_k}|_{L_J^{\perp}}\right)$ if $1\in J$. We compute the residue  $\Res_{v_1}$ of both sides.
We obtain
\begin{equation}\label{eq:structure-simple-poles3}
\frac{\bar{P}}{\bar{v}_2\cdots \bar{v}_s}= \sum_{\{J, 1\in J \}}\frac{\bar{P}_J}{\prod_{j\in J, j>1}\bar{v}_j}
\end{equation}
where $\bar{P}, \bar{P}_J, \bar{v}_j$ denote the restriction to $v_1^\perp$. When we identify  $v_1^\perp$ with  $C_Q(v_1)^*$,  we have $ \bar{v}_j\in C_Q(v_1)$, $\bar{P}\in \Sym(C_Q(v_1))$, and (\ref{eq:structure-simple-poles3}) is the decomposition of $\frac{\bar{P}}{\bar{v}_2\cdots \bar{v}_s}$.  For $J$ such that  $1\in J$, we have $L_J^\perp\subseteq v_1^\perp$, so $\bar{P}_J=P_J$.  So, by induction,
$$
P_J= \bar{P}_J= R_Q\left(\frac{\bar{P}}{\prod_{k\notin J}\bar{v}_k}|_{\bar{L}_{\bar{J}}^\perp}\right),
$$
where $\bar{J}=J\setminus \{1\}$, $\bar{L}_{\bar{J}}\subset C_Q(v_1)$ is the space spanned by $(\bar{v}_j, j\in \bar{J})$ and ${\bar{L}_{\bar{J}}^\perp}$ denotes the orthogonal of $\bar{L}_{\bar{J}}$ in $C_Q(v_1)$. Thus, $\frac{\bar{P}}{\prod_{k\notin J}\bar{v}_k}|_{\bar{L}_{\bar{J}}^\perp}$ is identified with $
\frac{P}{\prod_{k\notin J}v_k}|_{L_J^{\perp}}$.
\end{proof}

\subsection{Renormalization of the generating function of a cone}\label{subsection:mu-by-renormalization}
In this section, $V$ is a rational space. We will give an alternative definition of the  $\mu$-function of an affine (rational  polyhedral) cone  as the renormalization of the generating function of the cone, and derive the properties of the $\mu$-function from this construction. Renormalization leads also a fast algorithm for computing the $\mu$-function for a simplicial cone in fixed dimension, using Barvinok's decomposition into unimodular cones.

\begin{definition}\label{def:mu-by-renormalization}
Fix a  scalar product  $Q$ on $V$.
\newline \noindent 1)
Let $\c\subset V$ be a cone and let $s\in V$.  Consider the affine cone $s+\c$. Define
$$
\mu_Q(s+\c)(\xi)=R_Q(\e^{-\langle \xi,s\rangle}S(s+\c)(\xi)).
$$
Then $\mu_Q(s+\c)(\xi) $ is a holomorphic function on $V^*$ near $\xi=0$
\newline \noindent 2)
Let $W$ be a rational quotient of $V$ and let $\a\subset W$ be an affine cone.  Then $W^*$ is a subspace of $V^*$.
The function $\mu_Q(\a)$ on $W^*$ is extended to $V^*$ by $0$ on the orthogonal complement of $W^*$ with respect to $Q$.
\end{definition}
The valuation  property  of the $\mu$-function follows immediately from this definition.

\begin{proposition}\label{prop:properties-of-mu}
 $\mu_Q$ is a valuation on the set of affine cones in $V$ with a fixed vertex $s$.
\end{proposition}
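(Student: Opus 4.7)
The plan is to reduce the valuation property of $\mu_Q$ to the classical valuation property of the discrete generating function $S(\a)$ itself. Recall that by a well-known theorem of Lawrence (and, independently, Khovanskii--Pukhlikov), the map $\a \mapsto S(\a)$, defined on pointed rational affine cones and extended by zero on non-pointed cones, is a valuation: any linear relation $\sum_i r_i [s + \c_i] = 0$ among indicator functions of affine cones with common vertex $s$ implies the identity
\[
\sum_i r_i S(s + \c_i)(\xi) \;=\; 0
\]
of meromorphic functions of $\xi$, with hyperplane singularities contained in the union $\Delta$ of the edges of the $\c_i$ (a finite set, since only finitely many cones occur in a relation).

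Starting from this, the argument has two very short steps. First, because the vertex $s$ is fixed, the factor $\e^{-\langle \xi, s\rangle}$ is an invertible entire function independent of the $\c_i$, so the identity persists after multiplying:
\[
\sum_i r_i\, \e^{-\langle \xi, s\rangle} S(s+\c_i)(\xi) \;=\; 0.
\]
Second, the renormalization map $R_Q$ is, by construction, a linear projection on the space $\CR_\Delta$ associated to the direct sum decomposition of Theorem \ref{th:structure}, and it extends linearly to meromorphic functions with hyperplane singularities by renormalizing the Taylor expansion term by term (Remark \ref{remark:renormalization}). Applying $R_Q$ to the above identity therefore yields
\[
\sum_i r_i\, R_Q\bigl(\e^{-\langle\xi,s\rangle} S(s+\c_i)\bigr) \;=\; \sum_i r_i\, \mu_Q(s+\c_i) \;=\; 0,
\]
which is exactly the valuation property on the set of affine cones with vertex $s$.

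For the extension in part (2) of Definition \ref{def:mu-by-renormalization}, where the cones live in a rational quotient $W$ of $V$, the same argument applies verbatim inside $W$, producing a linear relation among the functions $\mu_Q(\a_i)$ on $W^*$. Extending each term by zero on the $Q$-orthogonal complement of $W^* \subset V^*$ is a linear operation, so the relation survives on $V^*$.

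The only non-trivial input is the valuation property of the discrete generating function $S$ itself, which is not reproved in the paper and must be cited; once that is granted, the rest is pure linearity of $R_Q$ and of the exponential shift, and there is no real obstacle.
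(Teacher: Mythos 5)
Your proof is correct and follows essentially the same route as the paper, whose one-line proof likewise reduces the claim to the valuation property of $\c\mapsto \e^{-\langle\xi,s\rangle}S(s+\c)(\xi)$ and the linearity of $R_Q$. You merely spell out the ingredients (the Lawrence/Khovanskii--Pukhlikov valuation property of $S$, invertibility of the exponential shift, linearity of renormalization on $\CR_\Delta$, and the zero-extension from a quotient $W$) that the paper leaves implicit.
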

\begin{proof}
The map $\c\mapsto \e^{-\langle \xi,s\rangle}S(s+\c)(\xi)$ is a valuation on this set.
\end{proof}

We will now show that the local Euler-Maclaurin formula for generating functions of cones (\cite{MR2343137}, Theorem 20) can be easily derived from  this definition and the results on the poles and residues of $S(\a)$. (This result is also obtained in \cite{GuoPaychaZhang2015}).
\begin{theorem}\label{th:localEML}
    Let $V$ be a rational vector space. Fix a rational Euclidean scalar product on $Q$. Let $\a\subset V$ be a rational affine cone. If $\f$ is a face of $\a$, let $I(\f)$ be the continuous generating function  of $\f$ (cf. Section \ref{subsection:generating-function}) and let $\t(\a,\f)\subset V/\lin (\f)$ denote the transversal cone to $\a$ along $\f$. Then
    \begin{equation}\label{eq:localEML}
     S(\a)=\sum_\f \mu_Q(\t(\a,\f)) I(\f).
    \end{equation}
    The sum runs over the set of faces of $\a$.
\end{theorem}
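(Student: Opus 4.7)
The plan is to verify (\ref{eq:localEML}) by applying the structure Theorem \ref{th:structure} to the Laurent expansion of $M(s,\c)(\xi)=\e^{-\langle\xi,s\rangle}S(\a)(\xi)$ (where $\a=s+\c$) and matching the resulting summands with the face-terms on the right-hand side. The key observation is that each term $\e^{-\langle\xi,s\rangle}\mu_Q(\t(\a,\f))I(\f)$ lies in the summand $\Sym(C_Q(\lin(\f)))\otimes G_{\Delta\cap\lin(\f)}$ of Theorem \ref{th:structure}, where $\Delta$ is the set of edge generators of $\c$: indeed, $\mu_Q(\t(\a,\f))$ is holomorphic in the $Q$-projection of $\xi$ onto $\lin(\f)^\perp$, hence lies in a completion of $\Sym(C_Q(\lin(\f)))$; and $I(\f)\,\e^{-\langle\xi,s\rangle}=I(\f-s)$ is a rational function whose simple poles are confined to the edges of $\f$, so it lies in $G_{\Delta\cap\lin(\f)}$.

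First I would reduce to the case where $\c$ is simplicial. Both sides of (\ref{eq:localEML}) are valuations in $\a$ with fixed vertex $s$ (the LHS by the Brion--Lawrence--Varchenko valuation property of $S(\a)$; the RHS term-by-term via Proposition \ref{prop:properties-of-mu} combined with the behavior of faces and transverse cones under subdivision). Subdividing $\c$ into simplicial subcones without adding edges then reduces to the simplicial case. For simplicial $\c$ with edge generators $v_1,\dots,v_\ell$, every subset $J\subseteq\{1,\dots,\ell\}$ defines a face $\f_J=s+\sum_{j\in J}\R_{\geq 0}v_j$, and Proposition \ref{th:structure-simple-poles} (extended term-by-term to the Laurent expansion, as in Remark \ref{remark:renormalization}) gives
$$
M(s,\c)(\xi)=\sum_J \frac{g_J(\xi)}{\prod_{j\in J}\langle\xi,v_j\rangle},\qquad g_J=R_Q\Bigl(\textstyle\prod_{j\in J}\langle\xi,v_j\rangle\cdot M(s,\c)(\xi)\Big|_{L_J^\perp}\Bigr),
$$
where $L_J=\sum_{j\in J}\R v_j$.

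The last step is to identify each $J$-summand with $\e^{-\langle\xi,s\rangle}\mu_Q(\t(\a,\f_J))\,I(\f_J)$. Using the limit $\langle\xi,v_j\rangle/(1-\e^{\langle\xi,v_j\rangle})\to -1$ for $j\in J$ and regrouping the fundamental-box sum $S(s+\b)$ by cosets modulo $L_J\cap\lattice$, one obtains
$$
\Bigl(\prod_{j\in J}\langle\xi,v_j\rangle\cdot M(s,\c)(\xi)\Bigr)\Big|_{L_J^\perp}=(-1)^{|J|}\bigl|\det\nolimits_{L_J\cap\lattice}(v_j,j\in J)\bigr|\,M(\bar s,\bar\c)(\bar\xi),
$$
with $\bar\c$ the transverse cone of $\c$ along $\f_J-s$ in $V/L_J$, $\bar s=\proj_{V/L_J}(s)$, and $\bar\xi\in(V/L_J)^*=L_J^\perp$. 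Applying $R_Q$ and invoking Definition \ref{def:mu-by-renormalization} yields $g_J=(-1)^{|J|}\,|\det|\,\mu_Q(\t(\a,\f_J))$, which combined with $I(\f_J)=(-1)^{|J|}\,\e^{\langle\xi,s\rangle}\,|\det|\,/\prod_{j\in J}\langle\xi,v_j\rangle$ produces the required identification; summing over $J$ gives (\ref{eq:localEML}). The main obstacle is the bookkeeping in the last displayed identity, particularly the relation $\vol(\b_\c)=|\det_{L_J\cap\lattice}(v_j,j\in J)|\cdot\vol(\b_{\bar\c})$ when the projected edge generators fail to be primitive in $\lattice/(L_J\cap\lattice)$: reconciling the sum over $(s+\b)\cap\lattice$ with the sum over $(\bar s+\bar\b)\cap(\lattice/(L_J\cap\lattice))$ requires a compatible choice of coset representatives in the two fundamental boxes.
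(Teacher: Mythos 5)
Your simplicial-case computation is sound and, apart from packaging, matches the machinery the paper itself provides: working with $M(s,\c)=\e^{-\langle\xi,s\rangle}S(\a)$ rather than with $S(\a)$ is exactly what makes each face term $\mu_Q(\t(\a,\f))\,\e^{-\langle\xi,s\rangle}I(\f)$ land in a single summand of Theorem \ref{th:structure}, and your displayed restriction identity is correct: the bookkeeping you flag works out because the projection $(s+\b)\cap\lattice\to V/L_J$ maps onto the lattice points of the projected box with every fiber of cardinality exactly $|\det_{L_J\cap\lattice}(v_j,j\in J)|$ (no choice of compatible representatives is needed); this is the same phenomenon as in the proof of Lemma \ref{lemma:residue}, where the projected generators need not be primitive. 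Combined with Proposition \ref{th:structure-simple-poles} and Definition \ref{def:mu-by-renormalization}, this does prove (\ref{eq:localEML}) for a simplicial affine cone.

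The genuine gap is your reduction of the general case to the simplicial one. You assert that the right-hand side $\sum_\f\mu_Q(\t(\a,\f))I(\f)$ is a valuation in $\a$ ``term-by-term via Proposition \ref{prop:properties-of-mu}'', but that proposition only says that $\a\mapsto\mu_Q(\a)$ is a valuation for cones with a fixed vertex in a fixed space, and this does not yield the claim: under a subdivision $[\c]=\sum_\alpha n_\alpha[\c_\alpha]$ (even one adding no edges), the pieces $\c_\alpha$ have faces which are not faces of $\c$ (interior walls and their subfaces), their transverse cones live in quotient spaces $V/\lin(\f_\alpha)$ that vary with the piece, and one must prove that all these extra contributions cancel while the contributions along a genuine face $\f$ of $\c$ recombine into $\mu_Q(\t(\c,\f))I(\f)$ --- e.g.\ for an interior wall $w$ shared by two pieces one needs $\mu(\t(\c_1,w))+\mu(\t(\c_2,w))-\mu(\{0\})=0$ in $V/\lin(w)$, together with the analogous recursive cancellations in lower dimension. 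This is precisely the valuation property of the $\mu$-function established in \cite{MR2343137}, i.e.\ the nontrivial step that the appendix proof is explicitly designed to avoid (see the remark following Lemma \ref{lem:musimplicial}). The paper instead proves the theorem for an arbitrary (possibly non-simplicial) affine cone by induction on the dimension: it computes the residues of $S(\a)$ and $I(\a)$ along edges (Lemma \ref{lemma:residue}), applies the induction hypothesis to the projected cone, and then uses the directness of the decomposition of Theorem \ref{th:structure} (an element of $\Sym(L^{\perp_Q})\otimes B_L$ with all residues zero vanishes) to identify the summands; the only subdivisions used there are applied to $S$ and $I$ themselves, whose valuation property is immediate. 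As written, your argument is therefore complete only for simplicial cones; to close it you must either supply the cancellation argument for subdivisions (essentially re-proving the valuation property of \cite{MR2343137}) or replace the reduction by a residue-and-induction scheme of the kind the paper uses.
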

\begin{proof}
The proof is by induction on $\dim \a$ and relies on the determination  of the residues of $S(\a)$ and $I(\a)$.
\begin{lemma}\label{lemma:residue}
The poles of $S(\a)$ and $I(\a)$ are the edge generators of $\a$.  The residues  $\Res_{v}S(\a)=vS(\a)|_{v^\perp}$ and
   $\Res_{v}I(\a)=vI(\a)|_{v^\perp}$ along an edge $v$ are given by
\begin{eqnarray}
  \Res_{v}S(\a)= - S(\proj_{V/v}\a),\label{eq:residueS}\\
  \Res_{v}I(\a)= - I(\proj_{V/v}\a),\label{eq:residueI}
\end{eqnarray}
    provided $v$ is a primitive lattice vector.
\end{lemma}
The analogous formula for any \emph{intermediate} generating function of $\a$ is  proven in (\cite{SLII2014}, Proposition 3.5), using a Poisson summation formula.
The following proof for the purely discrete and purely continuous generating functions is more elementary.
\begin{proof}
Using a subdivision of $\a$ into simplicial cones without adding edges, we can assume that $\a$ is simplicial. Furthermore,  we can also assume that $\a$ is full dimensional. Let $(v_1,\ldots ,v_d)$ be the primitive lattice edge generators of $\a$, with $v=v_1$. Let $s$ be the vertex of $\a$. Let $\b=\sum_{j}[0,1[v_j$ and $S(s+\b)(\xi)=\sum_{x\in (s+\b)\cap \lattice} \e^{\langle \xi, x\rangle}$, a finite sum,  so that $S(s+\b)(\xi)$ is analytic. We have
$$
S(\a)=S(s+\b)\frac{1}{\prod_{j=1}^d 1-\e^{v_j}}.
$$
We have $\Res_{v_1}\frac{1}{1-\e^{v_1}}=-1$, hence
$$
 \Res_{v_1}S(\a)= -S(\proj_{V/v_1}(s+\b))\, \frac{1}{\prod_{j=2}^d 1-\e^{\bar{v}_j}}.
$$
where $\bar{v}_j$ is the projection of $v_j$ on $V/v_1$.  The right-hand side is precisely $- S(\proj_{V/v_1}\a)$, even though the projected vectors $\bar{v}_j$ may not be primitive with respect to the projected lattice $\bar{\lattice}$. So we have proved (\ref{eq:residueS}).

We have $I(\a)=   \frac{(-1)^d |\det(v_1,\ldots,v_d)|\e^s}{\prod_{j=1}^d {v_j}}$, hence
$$ \Res_{v_1}I(\a)=
 \frac{(-1)^{d} |\det_\lattice(v_1,\ldots,v_d)|\e^{\bar{s}}}{\prod_{j=2}^d {\bar{v}_j}}.$$
Since $v_1$ is primitive, we have   $|\det_\lattice(v_1,\ldots,v_d)|=|\det_{\bar{\lattice}}(\bar{v}_2,\ldots,\bar{v}_d)|$. Therefore the right-hand side of the residue above is equal to $- I(\proj_{V/v_1}\a)$. So we have proved (\ref{eq:residueI}).
\end{proof}
Let $\Delta$ be the set of primitive edge generators of $\a$. We write the decomposition given by Theorem \ref{th:structure}.
\begin{equation}\label{eq:decompositionSa}
  S(\a)=\sum_{L} g_L.
\end{equation}
First, we observe that $ g_L\in \Sym(L^{\perp_Q})\otimes B_L$, in other words, $g_L$ has only simple poles.  This is clear if $\a$ is simplicial, so it is true also in the general case, using a subdivision of $\a$ with no edges added.

Pick $v\in \Delta$.  The set of faces of the projected cone $\proj_{V/v} \a$ is in one-to-one correspondence with the set of faces of $\a$ which have $v$ as an edge. For such a face $\f$, the transversal cone of  $\proj_{V/v} \a$ along its face $\proj_{V/v} \f$ coincides with the transversal cone $\t(\a,\f)\subset V/\lin(\f)$. Moreover, if $v$ is an edge of $\f$, we have
$I(\proj_{V/v} \f)=-\Res_{v}I(\f)$ by Lemma \ref{lemma:residue}.
On the other hand, if $v$ is not an edge of $\f$, we have $\Res_{v}I(\f)=0$.
So, by Lemma \ref{lemma:residue} and the induction hypothesis applied to the cone $\proj_{V/v} \a$, we have
 $$
  \Res_{v} S(\a)=\sum_{\f, v\in \f} \mu(\t(\a,\f)) I(\proj_{V/v} \f)=  \sum_\f \Res_{v}(\mu(\t(\a,\f)) I(\f)).
 $$
In conclusion,  we have $\Res_v g_L=0$ if $L$ is not a face and $\Res_v g_{\lin(\f)}= \Res_{v}(\mu(\t(\a,\f)) I(\f))$ for any edge $v$ of $\f$. It is clear that an element of $ \Sym(L^{\perp_Q})\otimes B_L$ is zero if all its residues are zero. Therefore, $g_L=0$ if $L$ is not a face and  $g_{\lin(\f)}= \mu(\t(\a,\f)) I(\f)$ for any face $\f$ of $\a$.
\end{proof}

\bibliographystyle{amsabbrv}
\bibliography{biblioEMLasymptotique}
\end{document}